\def\RR{\mathbb{R}}
\def\ZZ{\mathbb{Z}}
\def\TT{\mathbb{T}}
\def\sgn{\mathrm {sgn}\,}
\def\la{\langle}
\def\ra{\rangle}
\def\to{\rightarrow}
\def\tto{\longrightarrow}
\def\vphi{\varphi}
\def\pa{\partial}
\def\na{\nabla}
\def\eps{\varepsilon}
\def\hfe{\widehat {f^\eps}}
\def\Wb{\overline W}
\def\Tb{\overline T}
\def\om{\omega}
\newtheorem{theorem}{Theorem}[section]
\newtheorem{lemma}[theorem]{Lemma}
\newtheorem{proposition}[theorem]{Proposition}
\newtheorem{remark}[theorem]{Remarks}
\newtheorem{rk&ex}[theorem]{Remarks \& Examples}
\newtheorem{corollary}[theorem]{Corollary}
\newcommand{\beqar}{\begin{eqnarray*}}
\newcommand{\eeqar}{\end{eqnarray*}}
\newcommand{\lp}{\left(}
\newcommand{\rp}{\right)}
\newcommand{\be}{\begin{equation}}
\newcommand{\ee}{\end{equation}}
\def\signam{\bigskip \begin{center} {\sc
Antoine Mellet\par\vspace{3mm}
University of Maryland\par
Department of Mathematics \par
College Park 20742\par
USA\par\vspace{3mm}
e-mail:} \tt{mellet@math.umd.edu} \end{center}}
\def\signsm{\bigskip \begin{center} {\sc
Sara Merino-Aceituno\par\vspace{3mm}
University of Cambridge\par
DPMMS, Centre for Mathematical Sciences\par
Wilberforce Road,
Cambridge CB3 0WA,
UK\par\vspace{3mm}
e-mail:} \tt{s.merino-aceituno@maths.cam.ac.uk} \end{center}}
\title{Anomalous energy transport in FPU-$\beta$ chain\footnote{The final publication is available at Springer via \texttt{ http://dx.doi.org/10.1007/s10955-015-1273-2}}}
\author{A. Mellet\thanks{University of Maryland, College Park MD, 20742, USA.} \mbox{ }and S. Merino-Aceituno\thanks{University of Cambridge, Cambridge CB3 0WA, UK}\\
}
\begin{document}

\maketitle

\begin{abstract}
This paper is devoted to the derivation of a macroscopic fractional diffusion equation describing heat  transport in an anharmonic chain. More precisely, we study here the so-called FPU-$\beta$ chain, which is a very simple model for a one-dimensional crystal in which atoms are coupled to their nearest neighbors by a harmonic potential, weakly perturbed by a quartic potential.
The  starting point of our mathematical analysis is a kinetic equation:
Lattice vibrations, responsible for heat transport, are modeled by an interacting gas of phonons whose evolution is described by the Boltzmann Phonon Equation.
Our main result is the rigorous derivation of
an anomalous diffusion equation starting from the {\it linearized} Boltzmann Phonon Equation.
\end{abstract}

\section{Introduction}
The goal of this paper is to investigate some aspects of heat transport in dielectric solid crystals and  to derive a fractional  Fourier-type law for a simple one dimensional model.

At the microscopic level, a solid crystal is composed of atoms that oscillate around their equilibrium positions, which are described by a lattice in $\RR^n$ (which we can take to be $\ZZ^n$ for simplicity).
Denoting by $q_i$ the displacement of the atom $i\in\ZZ^n$ and by $p_i$ its momentum, the dynamic of the crystal is described by a Hamiltonian of the form (we set the mass of the atoms equal to $1$ for simplicity):
$$ 
H(p,q)= \sum_{i\in \ZZ^n}\frac{1}{2} p_i^2 + \sum_{i,j\in \ZZ^n}V(q_{j}-q_i)+ \sum_{i\in \ZZ^n} U(q_i).
$$ 
Because of the coupling (potential $V$) between neighboring atoms, oscillations propagate in the lattice. 
In an electrically insulating crystal, it is these vibrations that are responsible for heat transport (in conductors, other phenomena are at play).
At the macroscopic level, though, heat transport is classically described by Fourier's law, which states that the heat flux $\vec{j}$ behaves as
\begin{equation}\label{eq:Fourier}
\vec{j}=- \kappa\nabla_x T
\end{equation}
where $T$ is the temperature and $\kappa$ is a positive constant that may depend on the temperature itself. 

The relation between these two models, and in particular 
the rigorous derivation of Fourier's law directly from the Hamiltonian dynamic of the atoms (after rescaling of the  space and time variables) is a very challenging and largely open problem (see  \cite{bonetto2000fourier}). 
%One difficulty, in particular, is to prove that the system achieves local thermodynamical equilibrium. By this, we mean that subsets of the lattice that are large at the microscopic level but small at the macroscopic level can be considered to be at equilibrium, thus making it possible to define the notion of temperature of the lattice locally.
In this paper, we will consider
% kinetic model (the Boltzmann Phonon Equation) which serves as 
an intermediate (mesoscopic) model of kinetic type between the microscopic  and macroscopic levels:
the Boltzmann Phonon Equation.
The idea of using kinetic theory in the study of heat conduction in such crystals 
was first formulated by 
 Debye \cite{Debye} and then Peierls \cite{peierls1929kinetischen}.
 %, which consists in introducing  an intermediate step between the microscopic (atomic) level and the macroscopic level using kinetic theory.
One way to think about this kinetic description of heat propagation is to model the vibrations of the lattice as a gas of interacting phonons whose distribution function solves a Boltzmann type equation.
Our goal in this paper is then to derive an anomalous version of Fourier's law from the appropriate linearized Boltzmann phonon equation  (in the same way that the equations of  fluid mechanic have been derived from the Boltzmann equation for diluted gases).

%At the microscopic level, solids can be modeled as lattices, were each node represents an atom which moves (vibrates) around its equilibrium position. 
%For insulating crystals, where heat is  transported by lattice vibrations (see \cite{lepri2003thermal}), one possible approach to derive Fourier's law relies on the introduction of the Boltzmann phonon equation, a kinetic equation that can play the role of an intermediate step between the microscopic atomic level and the macroscopic scale. 
%It is this approach, 
% first formulated by Debye \cite{Debye} and then Peierls \cite{peierls1929kinetischen},  that we try to make rigorous in this paper in a very particular setting.
 
The particular framework we are interested in was made popular by a numerical experiment performed by  Fermi, Pasta and Ulam  in the 1950's at Los Alamos National Laboratories. 
The goal of their experiment was to investigate numerically the dynamic (and relaxation toward equilibrium) of the simplest model for a crystal: a chain (dimension $n=1$) of oscillators coupled to their nearest neighbors by non-linear forces  described by a Hamiltonian of the form
$$ H = \sum_{i\in \ZZ}\left[ \frac{1}{2} p_i^2 + V(q_{i+1}-q_i)\right].$$ 
When $V$ is purely harmonic, the system has quasi-periodic solutions and does not relax to an equilibrium  \cite{berman2005fermi} (in that case, we will see below that in the kinetic description, the phonons behave like non-interacting particles). 
Fermi, Pasta and Ulam  thus considered the next two simplest cases by adding a cubic potential $V(r)=\frac{r^2}{8}+\alpha \frac{r^3}{3}$ (this model is now referred to as the FPU-$\alpha$ chain) or a quartic potential $V(r)=\frac{r^2}{8}+\beta \frac{r^4}{4}$ (the FPU-$\beta$ chain).

These models have been widely studied since that original experiment 
as they provide a simple framework in which to understand heat transport in one-dimensional chains (see for instance \cite{livi2005,delfini2007} for some reviews). 
The key feature of these studies is that anomalous heat diffusion is typically the norm for such chains.

% by J. Lukkarinen and H. Spohn in \cite{lukkarinen2008anomalous} using 

%This was not unexpected, since anomalous heat diffusion phenomena in the FPU-$\beta$ framework has actually been observed numerically in dimension one and two (while normal diffusion is observed in  the three dimensional case), see in particular \cite{shimada2000simulational}, \cite
%{lepri2003thermal}, \cite{lepri2005studies}. 
%In fact, by using Peierls equation, it has previously been proved that the energy current correlation has a slow decay in time as $t^{-3/5}$  indicating anomalous diffusive behavior (see \cite{Pereverzev,lukkarinen2008anomalous}).

The main result of this paper is thus the  derivation of an anomalous diffusion equation for heat transport in the FPU-$\beta$ chain (we will see later why we do not consider the FPU-$\alpha$ chain)
starting from a linearized Boltzmann type kinetic equation.
The first step is to introduce the Boltzmann phonon equation corresponding to the FPU-$\beta$ chain.
While first developed by Peierls \cite{peierls1929kinetischen}, 
%who describes lattice vibrations, responsible for heat transport, as an interacting gas of phonons whose density distribution function (denoted $W$ below)  solves a Boltzmann phonon equation (also known as Peierls equation in this context).
the mathematical derivation of this Boltzmann phonon equation starting from the microscopic (Hamiltonian) model has been done formally by H. Spohn in \cite{spohn2006phonon} using Wigner transform. 
We will thus not focus on this step, though we will spend some time in this paper discussing the results of  \cite{spohn2006phonon} in the next section.
Our focus instead will  be on the rigorous derivation of Fourier's law from the linearized Boltzmann phonon equation.
As expected, we will not recover \eqref{eq:Fourier}, but instead a non-local (fractional) version of Fourier law corresponding to an anomalous diffusion equation (in place of the usual heat equation).

\medskip

Let us now describe our main result more precisely.
As mentioned above, the starting point of  our analysis is
the Boltzmann phonon equation given by:
$$
\pa_t W + \om' (k)\pa_x W = C(W)
$$
where the unknown $W(t,x,k)$ is a function of the time $t\geq 0$, the position $x\in\RR$ and the wave number $k\in\TT:=\RR/\ZZ$.
This function is introduced in \cite{spohn2006phonon}
as the Wigner transform of the displacement field of the atoms, but it can be interpreted as a density distribution function for a gas of interacting phonons (describing the chain vibrations).
The function $\omega(k)$ is the dispersion relation for the lattice and 
the operator $C$ describes the interactions between the phonons.

We will discuss in Sections \ref{sec:introo} and \ref{sec:beta} the particular form of $\omega$ and $C$ corresponding to our microscopic models.
For the FPU-$\beta$ chain, the operator $C$ will be the so-called \textit{four phonon collision operator}, which is an integral operator of Boltzmann type but cubic instead of quadratic (see \eqref{eq:CC4}).

%The energy of the system is then defined by (see \eqref{eq:energydef})
%$$ E(t)=\int_\RR \int_\TT \omega(k) W(t,x,k)\, dk$$
%and the temperature is defined as $E=k_B T $ where $k_B$ denotes Boltzmann's constant (in the sequel, we will choose temperature units so that $k_B=1$). 

As explained above, our goal is to derive a macroscopic equation for the temperature.
This is, at least in spirit, similar to the derivation of fluid mechanics equations from the Boltzmann equation for diluted gas (see \cite{GolseStR} and references therein).  
We will consider a perturbation of a thermodynamical equilibrium $\overline W(k)=\frac{\overline T}{\omega(k)}$ (note that the temperature is classically defined by the relation $E=k_B T$ where $E=\int_\TT \omega(k) W(k)\, dk $ and $k_B$ denotes Boltzmann's constant - here, we choose temperature units so that $k_B=1$):
$$ W^\eps(t,x,k) = \Wb(k)(1+\eps f^\eps(t,x,k)).$$
The function  $f^\eps$ then solves
$$\pa_t f^\eps+ \om'(k)\pa_x f^\eps = L(f^\eps) + \mathcal O(\eps)$$
where $L$ is the linearized operator
$$ L(f)=\frac{1}{\Wb}DC(\Wb)(\Wb f).$$ 

As usual a macroscopic equation is derived after an appropriate rescaling of the time and space variable. More precisely, we will show (see Theorem \ref{thm:main}) that
the solution of 
\begin{equation}\label{eq:ffd}
\eps^{\frac 8 5}\pa_t f^\eps+ \eps \om'(k)\pa_x f^\eps = L(f^\eps)
\end{equation}
converges to a function $T(t,x)$ (so $W^\eps(t,x,k)$ achieves local thermodynamical equilibrium when $\eps\to 0$) solution of
\begin{equation}\label{eq:TTT} 
\pa_t T + \frac{\kappa}{\Tb^{6/5}} (-\Delta)^{\frac 4 5} T=0.
\end{equation}
This fractional diffusion equation corresponds to the anomalous Fourier law (of order $3/5$)
\begin{equation}\label{eq:fourierfrac} \vec{j} = -\kappa(\Tb) \na (-\Delta)^{-\frac 1 5} T.
\end{equation}

%We explain now the main difficulty and novelty of this result. The linearized Boltzmann phonon equation conserves two quantities corresponding to the total energy and the total number of phonons. 
%The challenge will be to prove that the latter is zero in the limit. 
%This is in contrast with the results presented  
%in \cite{hittmeir2014kinetic}, where the authors consider a linear kinetic equation that does not preserve the positivity of its solution (as in our case) and that does conserve more than one quantity. In \cite{hittmeir2014kinetic} one observes that different conserved quantities require different time rescalings in the anomalous diffusive limit. In particular, only the most `singular' quantity shows anomalous diffusion (the rest have constant dynamics). Therefore, we need to prove in our case that the most singular conserved quantity (corresponding to the conservation of the number of phonons) goes to zero so that the less singular one (which gives $T$ above) shows anomalous diffusion.

\medskip

Before going any further, a discussion seems in order concerning the power $\frac 45$ of the Laplacian that we obtain in \eqref{eq:TTT} and the scaling of \eqref{eq:fourierfrac}. 
Such a scaling is consistent with the earlier theoretical results
of Pereverzev 
 \cite{Pereverzev} and Lukkarinen-Spohn \cite{lukkarinen2008anomalous}.
 This is of course not surprising since these results also relied on the kinetic approximation of the microscopic model.
The same scaling was also found using a different approach (mode coupling theory) in  \cite{lepri1998}.
However, whether or not it is consistent with the scaling of the original Hamiltonian dynamic is still very much under debate.

Indeed, it is important to stress the fact that the Boltzmann-Phonon equation, which serves as the starting point of our rigorous study, is formally derived from the Hamiltonian model as a weak perturbation limit (small quartic perturbation of the harmonic potential). 
In other words, equation \eqref{eq:TTT} is derived through a two steps limiting process 
which might not preserve the scaling of the original dynamic. 
This fact is actually  suggested by the example of the FPU-$\alpha$ chain, 
for which (as we will see later) the kinetic approximation leads to a free transport equation.
% (the collision operator in the Boltzmann Phonon Equation vanishes in that case).
Note also that in \cite{NR}, arguments based on a different approach lead to a different  scaling.
% would correspond to  a fractional diffusion equation involving  the operator $(-\Delta)^{\frac 3 4} $.

Of course this issue has attracted the interest of many scientists and numerous numerical experiments have been performed  since the original work of Fermi, Pasta and Ulam,  \cite{lepri1997,shimada2000simulational,lepri2003thermal,lepri2005studies,delfini2007,GDL10}. But while all those computations clearly show that anomalous heat diffusion is taking place (as opposed to regular diffusion), it is very challenging to determine the precise scaling law.
A complete review of the discussions on this topic is beyond the scope of this paper, 
and we refer the interested reader to the papers cited above (see in particular \cite{lepri2003universality, delfini2007}). 

There is also an important literature devoted to the mathematical analysis of heat propagation in one-dimensional chains using probabilistic techniques.
% to investigate diffusive and superdiffusive heat transport in FPU-type  chains. 
In this approach, the Hamiltonian dynamics of the microscopic system is described by an harmonic potential but it is  perturbed by a stochastic noise conserving momentum and energy.
In the weak noise limit, one also recovers fractional diffusion phenomena
 (see \cite{basile2006momentum}, \cite{basile2009thermal}, \cite{Olla}, \cite{basile2010energy} and the review paper \cite{olla2009energy}). 
More recently, a similar approach has been developed without the weak noise assumption to derive
fractional heat equations of order $3/4$  (see  \cite{Jara34}  and  \cite{Benardin34}). The same order is obtained in the results presented in \cite{spohn2014nonlinear}, which are applicable to the case of the FPU-$\beta$ chain that we are considering. The approach in \cite{spohn2014nonlinear} is different from ours since the author uses mode coupling theory and the dynamics studied are given by a mesoscopic equation coming from `linear fluctuating hydrodynamics' which are evolution equations with an added noise term.

%rescaling of time and space variable

%Recently, in \cite{Jara34} a fractional heat equation of order $3/4$ is actually derived, starting from these stochastic chains. In \cite{Benardin34} a similar result is obtained for different dynamics with a noise that conserves the total energy and volume.

%, bypassing the Boltzmann Phonon Equation,

\medskip

Going back to the object of the present paper, we note that 
the derivation of fractional diffusion equations from kinetic equations such as \eqref{eq:ffd} is now classical (see \cite{Mouhot11}, \cite{mellet2009fractional}, \cite{ben2011anomalous}, \cite{abdallah10Hilbert}, \cite{hittmeir2014kinetic}, \cite{Olla}). As in previous results (see in particular \cite{ben2011anomalous}), the order of the limiting diffusion process is determined by the degeneracy of the collision frequency of the linearized operator $L$.
Our work is thus greatly indebted to the work of J. Lukkarinen and H. Spohn \cite{lukkarinen2008anomalous} who carefully study the properties of the operator $L$ and show in particular that the collision frequency behaves as $|k|^{5/3}$ when $k\to 0$. 
Using this result (and under the so-called {\it kinetic conjecture}), they then show that  
$$\lim_{t\to \infty} t^{3/5}C(t) = c_0$$
for some constant $c_0>0$, where $C(t)$ is the Green-Kubo correlation function (in a standard diffusive setting $C(t)$ would converge to a constant). As we mentioned before, this order is consistent with ours (since a laplacian of order $4/5$ corresponds to an anomalous Fourier's law of order $3/5$).

\medskip

Comparing the result of the present paper with previous results on fractional diffusive limit for kinetic equations mentioned above, we point out that $L$ is the {\it linearized} Boltzmann Phonon operator which is quite different from the {\it linear} Boltzmann operator considered in most previous works. 
In particular, its
kernel is $2$ dimensional, and its cross section ($K(k,k')$ defined by \eqref{eq:KernelK}) changes sign. 
A similar situation arises in \cite{hittmeir2014kinetic}, where the authors consider a linearized BGK-type operator, which does not preserve the positivity of its solution and does conserve more than one quantity. 
In \cite{hittmeir2014kinetic} the authors show that different conserved quantities require different time rescaling in the anomalous diffusive limit. In particular, only the most `singular' quantity shows anomalous diffusion (the rest have constant dynamics). 
The difficulty in our case will thus be to prove that the most singular conserved quantity goes to zero as $\eps$ goes to zero, so that the less singular one (which corresponds to the temperature $T$ above) exhibits anomalous diffusion.

Note that the fact that the kernel is two dimensional (which will be discussed extensively in the next sections) appears to be a mathematical artifact rather than being related to some physical phenomenon.
It does, however, indicate some weakness in the mixing properties of the collision process (this will be even more obvious for the FPU-$\alpha$ chain, for which the collision operator vanishes altogether).
And while the macroscopic behavior of $f^\eps$ is completely determined by the function $T(t,x)$, the other component of the projection of $f^\eps$ onto the kernel of $L$ will play a role in reducing the value of the diffusion coefficient $\kappa$.

\medskip

We will not attempt here to derive a nonlinear Fourier law by working with the nonlinear operator $C$ (rather than the linearized operator $L$).
Such a derivation is developed 
in \cite{bricmont2008approach} by Bricmont and Kupiainen, but under assumptions that ensure that regular diffusion, rather than anomalous diffusion, takes place (non degeneracy of the collision frequency).

\medskip

This paper is organized as follows:
In Section \ref{sec:introo}, we describe the original problem (chains of coupled harmonic oscillators) and 
its relation to the Boltzmann phonon equation. We then introduce the collision operators $C$ that appears in the context of FPU chains. In that section, we will see in particular that this kinetic description cannot be used to study the FPU-$\alpha$ chain because the collision operator $C$ vanishes in that case.
This section is mostly based on the paper of H. Spohn \cite{spohn2006phonon}.
In Section \ref{sec:beta}, we investigate the properties of the four phonon collision operators, appearing in the context of the FPU-$\beta$ chain as well as its linearization around an equilibrium (this section is largely based on the work of J. Lukkarinen and H. Spohn \cite{lukkarinen2008anomalous}). 
The main result of our paper  is finally stated in Section \ref{sec:main_result} and its proof is divided between   Sections \ref{sec:L} and \ref{sec:main}.

\medskip

\section*{Acknowledgement}
This material is based upon work supported by the Kinetic
Research Network (KI-Net) under the NSF Grant No. RNMS  \#1107444.

\noindent A.M. is partially supported by NSF Grant DMS-1201426. 

\noindent S.M thanks the University of Maryland and CSCAMM (Center for Scientific Computation and Mathematical Modeling) for their hospitality; the Cambridge Philosophical Society and Lucy Cavendish College (University of Cambridge) for their financial support. Thanks to Cl\'ement Mouhot from the University of Cambridge for his help and support. Thanks to Herbert Spohn from Technische Universit\"{a}t M\"{u}nchen for useful discussions. S.M is supported by the UK Engineering and Physical Sciences Research Council (EPSRC) grant EP/H023348/1 for the University of Cambridge Centre for Doctoral Training, the Cambridge Centre for Analysis.

\medskip

\section{Crystal vibrations: A kinetic description}\label{sec:introo}
In this section, we recall the results from the paper of H. Spohn \cite{spohn2006phonon} that are relevant to our present study.
Our goal is to detail the relation between the Boltzmann phonon equation that we are considering in this paper and the microscopic models.
At the microscopic level, we  
consider an infinite lattice $\ZZ^n$ describing the equilibrium positions of the atoms of a crystal (we briefly introduce the model in general dimension, though starting in Section \ref{sec:FPU_framework}, we will focus solely on the one-dimensional case).
The deviation of the atom $i\in \ZZ^n$ from its equilibrium position is denoted by $q_i$, and the conjugate momentum variable  is denoted by $p_i$.
We consider the dynamics associated to the Hamiltonian
$$H(q, p) = \frac{1}{2} \sum_{i \in \mathbb{Z}} p_i^2 + V_h(q) + \sqrt\lambda V(q)$$
where $V_h$ is a harmonic potential (quadratic)  and $\sqrt \lambda V$ is a small anharmonic perturbation (the kinetic equation is obtained in the limit $\lambda\to 0$).
The general form of the harmonic potential is
\begin{equation}\label{eq:Vh0} 
V_h (q)=\frac{1}{2} \sum_{i,j\in\ZZ^n} \bar\alpha(i-j)q_iq_j + \frac{\omega_0^2}{2}\sum_{i\in\ZZ^n} q_i^2,
\end{equation}
while $V$ is typically a cubic or quartic potential of the form
$$ V(q) = \sum_{i\in\ZZ^n} \gamma(q_i) \quad \mbox{ or } \quad V(q)=\sum_{\small{\begin{array}{l} i,j\in\ZZ^n \\ |i-j|=1
\end{array}}
}  \gamma(q_j-q_i) .
$$

In order to understand how energy is being transported by the vibration of the atoms in the lattice, we will replace this very large system of ODE by a kinetic equation (the so-called Boltzmann phonon equation) whose unknown $W(x,k,t)$ will be interpreted as a density distribution function for a gas of interacting phonons.  
The idea of describing the lattice vibrations by interacting phonons, whose evolution would be described by a Boltzmann type equation first appeared in a paper of Peierls \cite{peierls1929kinetischen}.
This derivation was made more rigorous by H. Spohn \cite{spohn2006phonon} using Wigner transforms and asymptotic analysis.

We will not give any details concerning this derivation (we refer the interested reader to the work of   H. Spohn \cite{spohn2006phonon}). 
We just claim that (formally at least)
an appropriately rescaled Wigner transform of the displacement field $q$ converges when $\lambda\to 0$ to a function $W(t,x,k)$ solution of the Boltzmann phonon equation
\begin{equation}\label{eq:0}
 \pa_t W + \na_k \om (k)\cdot \na_x W = C(W).
\end{equation}
The function $W$ depends on the time $t\geq 0$, the position $x\in\RR^n$ and a wave vector $k$ which lies in the Torus $\TT^n = \RR^n/\ZZ^n$.
The function $\om(k)$ is the dispersion relation of the lattice. It is determined by the harmonic part of the potential. 
For general 
potential given by  \eqref{eq:Vh0}, we have:
\begin{equation}\label{eq:omegageneral}
 \omega(k)= (\omega_0^2+\widehat{\bar\alpha}(k))^{1/2}
 \end{equation}
where $\widehat {\bar \alpha}(k)$ is the Fourier transform of $a$, defined by
$$ \widehat{\bar\alpha}(k) = \sum_{j\in\ZZ^n}  e^{-i2\pi k\cdot j} \bar\alpha(j).$$
The operator $C$ in the right hand side of \eqref{eq:0} is an integral collision operator which depends on the anharmonic potential $V(q)$. Of course this operator $C$ is crucial in determining the long time behavior of the solutions of this equation, so we will spend a bit of time discussing its properties in this introduction.

\medskip

Note that while the relation between $W(t,x,k)$ and the microscopic variable $q_i$ and $p_i$ is rather complicated,  the total energy of the system is given by
\begin{align} 
\int_{\RR^n}  \int_{\TT^n} \omega(k) W(t,x,k) \, dk\, dx & = \frac{1}{2}\int |\widehat p(k)|^2 +\omega(k)^2 |\widehat q(k)|^2\, dk \nonumber \\
& =\sum_{i\in \ZZ^n} \frac{1}{2} p_i^2 + V_h(q). \label{eq:energydef}
\end{align}

\subsection{The FPU framework} \label{sec:FPU_framework}
As explained in the introduction, we now focus on the FPU chain model.
For this model, we have  $n=1$ (we denote by $\TT$ the torus $\TT=\RR/\ZZ$) and 
the potential describes only nearest neighbors interactions.
The harmonic potential is thus given by:
$$
V_{h} (q)=   \frac{1}{8} \sum_{i\in \mathbb{Z}} (q_{i+1}- q_{i})^2,
$$
and the anharmonic potential $V$ is either cubic (FPU-$\alpha$ chain) or quartic (FPU-$\beta$ chain):
$$ V(q) = \sum_{i\in\ZZ^n} \gamma(q_{i+1}-q_i), \qquad \gamma(q)=\frac 1 3 q^3 \mbox{ or } \gamma(q)=\frac 1 4  q^4.$$
The corresponding microscopic dynamics is given by
\begin{eqnarray}
\frac{d}{dt} q_i(t) &=& p_i(t) \label{eq:hamiltonian_dynamics}\\
\frac{d}{dt} p_i(t) &=&  \frac{1}{4} q_{i+1}(t)-\frac{1}{2} q_i(t)+ \frac{1}{4} q_{i-1}(t) - \sqrt{\lambda} [\gamma'(q_i-q_{i-1}) - \gamma'(q_{i+1}-q_i)] . \nonumber
\end{eqnarray}

\subsection{The dispersion relation}
When $V_h$ is given by
\begin{equation}\label{eq:harmV}
V_{h} (q)= \frac{1}{2} \omega_0^2 \sum_{i\in\ZZ} q_i^2+  \frac{1}{8} \sum_{i\in \mathbb{Z}} (q_{i+1}- q_{i})^2,
\end{equation}
equation \eqref{eq:omegageneral} gives the following formula for 
the dispersion relation:
$$ \omega(k)^2 
=\omega_0^2+ \frac{1}{2}-\frac{1}{4}\lp e^{i2\pi k}+e^{-i 2\pi k}\rp $$
and so 
\begin{equation}\label{eq:om0} 
\omega (k)= \left( \omega_0^2 +  \frac{1}{2}(1-\cos(2\pi k))\right)^{1/2}, \quad k\in\TT.
\end{equation}
For the FPU model, we have $\omega_0=0$, and so the dispersion relation is given by
$$ \omega(k)=\sqrt{\frac{1}{2}(1-\cos(2\pi k))}=|\sin(\pi k) |.$$

\subsection{The interaction operator $C$}
The operator $C$ in the right hand side of \eqref{eq:0} is determined by 
the non-harmonic perturbation of the potential $V$.

\paragraph{Cubic potentials: Three phonons operator}
When the anharmonic potential is cubic, that is
\begin{equation}\label{eq:Vcub1} 
V=\frac 1 3 \sum_{i\in\ZZ} q_i^3,
\end{equation}
or
\begin{equation}\label{eq:Vcub2}
 V=\frac 1 3 \sum_{i\in\ZZ} (q_{i+1}-q_i)^3
\end{equation}
(the latter one corresponds to the FPU-$\alpha$ chain), 
 the collision operator is given by
 \begin{align} 
 C(W) = & \, 4\pi \int\int F(k,k_1,k_2)^2
 \nonumber \\
 & \times \Big[
 2\delta(k+k_1-k_2) \delta( \omega+\omega_1-\omega_2) (W_1W_2+WW_2-WW_1) \nonumber \\
 &\; +  \delta(k-k_1-k_2) \delta( \omega-\omega_1-\omega_2) (W_1W_2-WW_1-WW_2) \Big]
  dk_1 dk_2\label{eq:C3}
 \end{align}
where we used the notation $\omega_i=\omega(k_i)$ and $W_i=W(k_i)$.

The formula for the collision rate $F(k,k_1,k_2)$ can be found in \cite{spohn2006phonon}.
In particular, when $V$ is given by \eqref{eq:Vcub1} (on-site potential) then 
$$ 
F(k,k_1,k_2)^2 =  (8\omega \omega_1\omega_2)^{-1}
$$
When $V$ is the nearest neighbor interaction potential \eqref{eq:Vcub2}
and  $\omega_0=0$ (that is for the FPU-$\alpha$ chain), the collision rate becomes
$$ F(k,k_1,k_2)^2 =  (8\omega \omega_1\omega_2)^{-1}|  [\exp(i2\pi k)-1] [\exp(i2\pi k_1)-1] [\exp(i2\pi k_2)-1]|^2 .$$
 Using the fact that 
 $$ |\exp(i2\pi k)-1|^2 = 4\sin^2(\pi k),$$
 we find
 $$ F(k,k_1,k_2)^2=8\frac{\sin^2(\pi k)\sin^2(\pi k_1)\sin^2(\pi k_2)}{\omega \omega_1\omega_2}=8\omega\omega_1\omega_2.
 $$

Going back to \eqref{eq:C3}, we note that
the first term can be interpreted as describing a wave vector $k$ merging with a wave vector $k_1$ and leading to a new wave vector $k_2$ ($k+k_1\to k_2$), while the second term describes the splitting of wave vector $k$ into $k_1$ and $k_2$ ($k\to k_1+k_2$). See Figure \ref{fig:1}.
 \begin{figure}
\begin{center}
\scalebox{1}{ \pdfimage{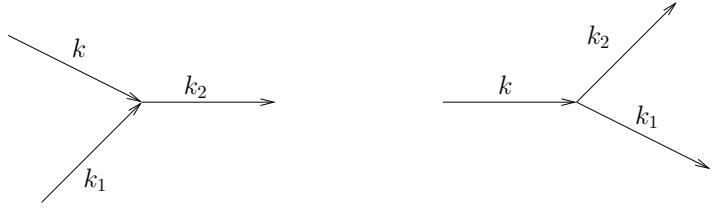}}
\end{center}
\caption{Three phonons interactions}\label{fig:1}
\end{figure} 
These interactions conserve the energy ($\omega+\omega_1=\omega_2$), but the momentum is conserved only modulo integers: the $\delta$-function in the first term yields the constraint $k+k_1=k_2 +n$, $n\in\ZZ$, $k,k_1,k_2\in\TT$ (one talks of normal process when $n=0$, and {\it umklapp} process when $n\neq 0$).

This quadratic operator is reminiscent of the Boltzmann operator for the theory of dilute gas. There is however an essential difference: The kinetic energy $\frac 1 2 v^2$ is replaced here by the dispersion relation $\omega(k)$. 
In order to further study this integral operator, it is thus essential to characterize the set of $(k,k_1,k_2)$ such that the $\delta$-functions are not zero, that is:
$$\left\{
\begin{array}{l}
k+k_1=k_2 \\
\omega(k)+\om(k_1)=\om(k_2)
\end{array}
\right.
$$
or
\begin{equation}\label{eq:3phonon}
\omega(k)+\omega(k_1)=\omega(k+k_1), \qquad (k,k_1)\in \TT,
\end{equation}
This is much more delicate than for the usual Bolzmann operator
and for general dispersion relation $\omega$, it is not obvious that \eqref{eq:3phonon} has any solutions.

In our framework, that is when $\omega$ is given by \eqref{eq:om0}  (nearest neighbor harmonic coupling) we actually can prove (\cite{spohn2006phonon}) that
$$ \omega(k)+ \omega(k_1) - \omega(k+k_1) \geq \frac{\omega_0}{2} $$
so \eqref{eq:3phonon} has no solutions when $\omega_0>0$.
When $\omega_0=0$, we can check easily that the equation
$$  \omega(k)+ \omega(k_1) = \omega(k+k_1)$$
only has the trivial solution $k_1=0$ and that the equation
$$  \omega(k)= \omega(k_1) + \omega(k-k_1)$$
only has the trivial solutions $k_1=0$ and $k_1=k$. Using the explicit formula for $C$, we can then check that this lead to $C(W)=0$ for all functions $W$.
We thus have:
\begin{theorem}
When $\omega$ is given by \eqref{eq:om0} with $\omega_0\geq 0$, then the three phonon collision operator \eqref{eq:C3} satisfies $C(W)=0$ for all $W$.
\end{theorem}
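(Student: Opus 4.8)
The plan is to exploit the fact that the two Dirac masses in each term of \eqref{eq:C3} force the triple $(k,k_1,k_2)$ to lie on a \emph{resonance set} determined jointly by conservation of momentum (modulo $\ZZ$) and conservation of energy, and to show that this set is either empty or so degenerate that the collision rate $F^2$ vanishes on it. Concretely, in each of the two terms I would first integrate out $k_2$ using the momentum delta function $\delta(k\pm k_1-k_2)$, reducing the double integral to a single integral in $k_1$ against an energy delta function $\delta(g(k_1))$, where for the merging term $g(k_1)=\omega(k)+\omega(k_1)-\omega(k+k_1)$ and for the splitting term $g(k_1)=\omega(k)-\omega(k_1)-\omega(k-k_1)$. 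Everything then reduces to understanding the zero set of $g$ and the behaviour of $F^2$ there.

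For the case $\omega_0>0$ the conclusion is immediate from the inequality recorded in the excerpt, namely $\omega(a)+\omega(b)-\omega(a+b)\geq \tfrac{\omega_0}{2}$. Applied with $a=k$, $b=k_1$ this gives $g(k_1)\geq \tfrac{\omega_0}{2}>0$ for the merging term; applied with $a=k_1$, $b=k-k_1$ it gives $g(k_1)\leq -\tfrac{\omega_0}{2}<0$ for the splitting term. In both cases $g$ never vanishes, the energy delta function is supported on the empty set, and hence each term of \eqref{eq:C3} is identically zero. This yields $C(W)=0$ with no reference to either $F$ or $W$.

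For the case $\omega_0=0$, where $\omega(k)=|\sin(\pi k)|$, the zero set of $g$ is no longer empty but, as recalled above, consists only of the trivial points: $k_1=0$ for the merging term and $k_1\in\{0,k\}$ for the splitting term. The decisive observation is that at each of these points one of the three frequencies $\omega,\omega_1,\omega_2$ equals $\omega(0)=0$ (for splitting at $k_1=k$ one has $k_2=k-k_1=0$). Since the FPU-$\alpha$ collision rate is $F(k,k_1,k_2)^2=8\,\omega\,\omega_1\,\omega_2$, it vanishes at every point of the resonance set, whereas the bracket factors $(W_1W_2+WW_2-WW_1)$ and $(W_1W_2-WW_1-WW_2)$ remain bounded there (indeed they reduce to $\pm W(k)^2$, so they do \emph{not} help). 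I would then localize the distributional integral $\int \delta(g(k_1))\,F^2\,[\cdots]\,dk_1$ at the zeros of $g$: because $F^2$ vanishes there while the remaining factors stay bounded, each contribution is zero and again $C(W)=0$.

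The delicate point, and the one I would treat most carefully, is precisely this last localization in the $\omega_0=0$ case: the trivial zeros of $g$ coincide with the corner points of $\omega=|\sin(\pi k)|$, where $\omega$ fails to be differentiable, so the naive change of variables $\delta(g(k_1))=\sum |g'(k_1^*)|^{-1}\delta(k_1-k_1^*)$ cannot be invoked blindly. The clean way around this is to keep the bookkeeping at the level of vanishing orders: near $k_1=0$ one has $\omega_1=|\sin(\pi k_1)|\sim \pi|k_1|$, so $F^2=O(|k_1|)$, while the one-sided slopes of $g$ at the corner are generically nonzero, making $\delta(g)$ a finite multiple of $\delta(k_1)$; the product $F^2\,\delta(g)$ is then the zero distribution since $|k_1|\,\delta(k_1)=0$. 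Making this vanishing-order argument precise (including the exceptional values of $k$ for which a one-sided slope of $g$ degenerates, which form a negligible set and can be handled by approximation) is where the genuine content of the proof lies, the enumeration of resonances being only the preliminary step.
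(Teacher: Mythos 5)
Your proposal is correct and follows essentially the same route as the paper: the resonance inequality $\omega(k)+\omega(k_1)-\omega(k+k_1)\geq\tfrac{\omega_0}{2}$ disposes of the case $\omega_0>0$, and for $\omega_0=0$ the only resonances are the trivial ones, at which the collision rate $F^2=8\,\omega\,\omega_1\,\omega_2$ vanishes while the brackets reduce to $\pm W(k)^2$ and stay bounded. The paper compresses this last step into ``using the explicit formula for $C$, we can then check that this leads to $C(W)=0$''; your vanishing-order analysis of the product $F^2\,\delta(g)$ at the corner points of $\omega$ is precisely the content of that check, worked out in more detail.
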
 

\medskip

In particular, this implies that for the FPU-$\alpha$ chain, the collision operator vanishes, and  the corresponding Boltzmann phonon equation reduces to pure transport.
This suggests  poor relaxation to equilibrium for the microscopic model, and it means that this kinetic approach is of no use in studying the long time behavior of the hamiltonian system.
This is of course the reason why we focus in this paper on the FPU-$\beta$ chain.

\medskip

\begin{remark} \label{rmk:1}
As noted in \cite{spohn2006phonon},  equation \eqref{eq:3phonon} might have non trivial solutions for other dispersion relations (for instance 
$\omega(k)=\omega_0+2(1-\cos(2\pi k))$), so this three phonon operator is of interest in other frameworks (different harmonic potential $V_h$).
\end{remark}

\medskip
\medskip

 \paragraph{Quartic potentials: Four phonons operator.}
We now consider the quartic  potential
given by
\begin{equation}\label{eq:Vquar1} 
V(q)=\frac{1}{4}\sum_{i\in \ZZ} q_i^4
\end{equation}
or
\begin{equation}\label{eq:Vquar2} 
V=\frac 1 4 \sum_{i\in\ZZ} (q_{i+1}-q_i)^4.
\end{equation}

The corresponding collision operator then reads
 \begin{align} 
 C(W) = &\,12\pi   \sum_{\sigma_1,\sigma_2,\sigma_3=\pm1} \int\int\int 
 F(k,k_1,k_2,k_3) ^2
  \nonumber \\
 & \times \delta(k+\sigma_1k_1+\sigma_2k_2+\sigma_3 k_3) \delta( \omega+\sigma_1\omega_1+\sigma_2\omega_2+\sigma_3 \omega_3)\nonumber \\
 & \times(W_1W_2W_3+W(\sigma_1W_2W_3+W_1\sigma_2W_3+W_1W_2\sigma_3)) \, dk_1\, dk_2\, dk_3\label{eq:C4}
 \end{align}
 with
 $$ F(k,k_1,k_2,k_3)^2 = (16 \omega \omega_1\omega_2\omega_3)^{-1}$$
 for on-site potential \eqref{eq:Vquar1}
 and 
 \begin{equation}\label{eq:colfreq}
F(k,k_1,k_2,k_3)^2 = \prod_{i=0}^3 \frac{2\sin^2(\pi k_i)}{\om(k_i)}=16\omega\omega_1\omega_2\omega_3.
\end{equation}
for nearest neighbor coupling \eqref{eq:Vquar2}.

The term proportional to $W$ is the loss term, while the gain term is $W_1W_2W_3$ (which is always positive).
Again, we can interpret  the different terms as pair collisions or merging/splitting of phonons (see Figure \ref{fig:2}). 
 \begin{figure}
\begin{center}
\scalebox{1}{ \pdfimage{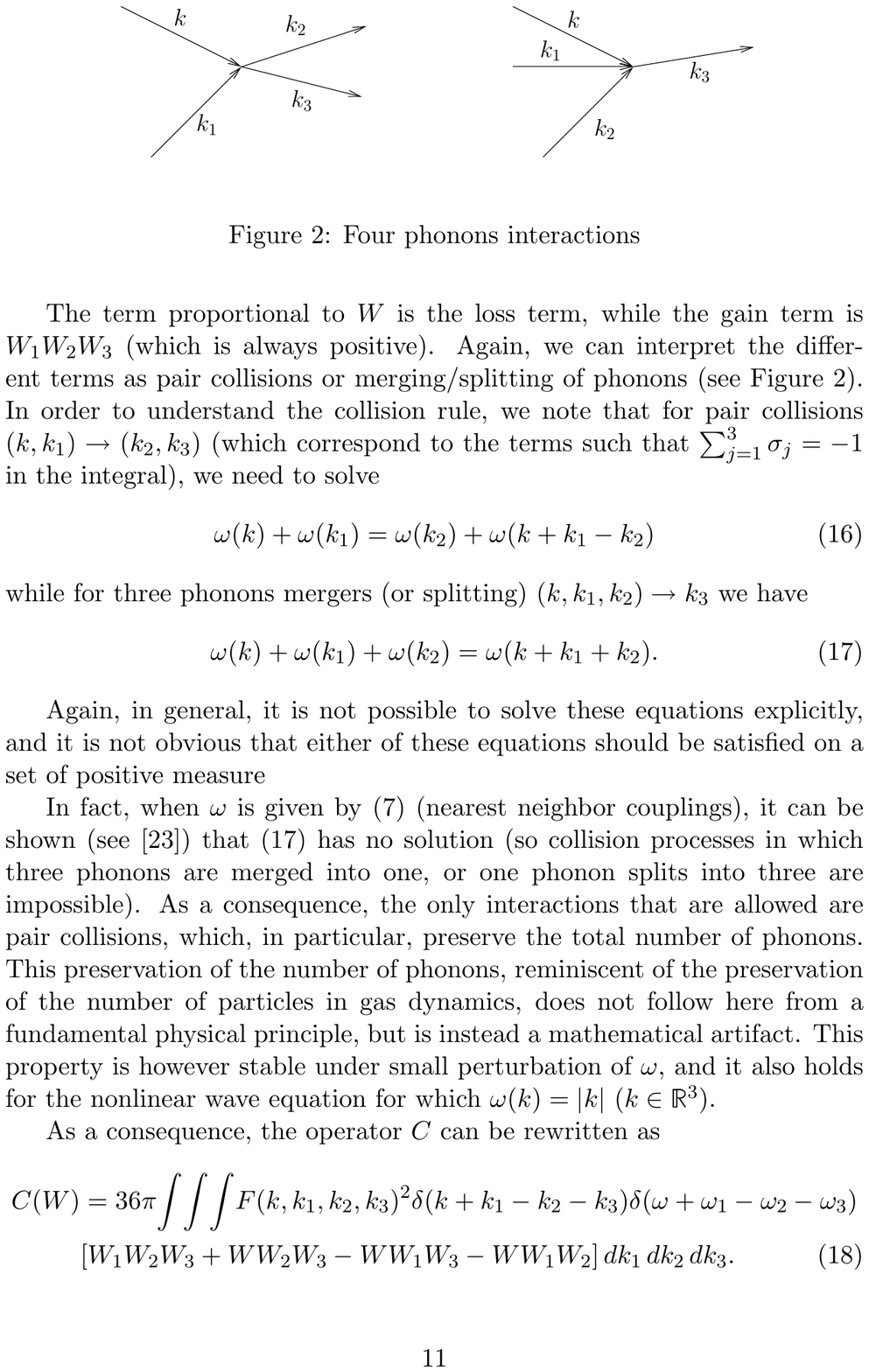}}
\end{center}
\caption{Four phonons interactions}\label{fig:2}
\end{figure} 
In order to understand the collision rule, we note that for pair collisions $(k,k_1)\to(k_2,k_3)$ (which correspond to the terms such that $\sum_{j=1}^3 \sigma_j =-1$ in the integral), we need to solve
\begin{equation}\label{cons1}
\omega(k)+\omega(k_1) = \omega(k_2) + \omega(k+k_1-k_2)
\end{equation}
while for three phonons mergers (or splitting) $(k,k_1,k_2)\to k_3$ we have
\begin{equation}\label{cons2} 
\omega(k) + \omega(k_1) +\omega(k_2) =\omega(k+k_1+k_2).
\end{equation}

In general, it is not possible to solve these equations explicitly, and it is not obvious that 
either of these equations should be satisfied on a set of positive measure.
In fact, when $\omega$ is given by \eqref{eq:om0} (nearest neighbor  couplings), it can be shown (see \cite{spohn2008}) that
\eqref{cons2} has no solution when $\omega_0>0$, 
%(so collision processes in which three phonons are merged into one, or one phonon splits into three are impossible), 
and only the trivial solutions (in which two of $k$, $k_1$ and $k_2$ vanish) when $\omega_0=0$.
In this latter case, a careful accounting of all the terms in \eqref{eq:C4} corresponding to three phonons split or merger  ($\sigma_1\sigma_2\sigma_3=-1$) show that these terms cancel out. 
As a consequence,  the only interactions that are contributing to the operator $C$ are pair collisions, which, in particular, preserve the total number of phonons.
This preservation of the number of phonons, reminiscent of the preservation of the number of particles in gas dynamics, does not follow here from a fundamental physical principle, but is instead a mathematical artifact
%This property is however stable under small  perturbation of $\omega$, and it also holds 
(note that this property holds also for the nonlinear wave equation for which $\omega(k)=|k|$, $k\in \RR^3$).

As a consequence, the operator $C$ can be  rewritten as
\begin{align}
C(W)& = 36\pi\!\int\!\int\!\int \!F(k,k_1,k_2,k_3)^2
\delta(k+k_1-k_2-k_3) 
\delta(\om+\om_1-\om_2-\om_3) \nonumber \\
& [W_1W_2W_3+WW_2W_3-WW_1W_3-WW_1W_2] \, dk_1\, dk_2\, dk_3 .\label{eq:CC4}
\end{align}

When $\omega$ is given by \eqref{eq:om0}, we will see later on that \eqref{cons1} has non trivial solutions on a set of full measure, that is
$$ \int_\TT \int_\TT \delta(\omega(k)+\omega(k_1) - \omega(k_2) - \omega(k+k_1-k_2) )\, dk_1\, dk_2 \neq 0.$$
In particular this operator $C$ is non trivial.

\section{FPU-$\beta$ chain: The four phonon collision operator}\label{sec:beta}
In this section, we briefly summarize the properties of the four phonon collision operator \eqref{eq:CC4} which arises in the modeling of the FPU-$\beta$ chain.

\subsection{Conserved quantities}
All the collision operators $C$ mentioned above conserve the energy. This can be expressed by the following condition:
$$ \int_\TT \omega(k) C(W)(k)\, dk = 0 $$
for all functions $W$.

The four phonon collision operator \eqref{eq:CC4}, corresponding to the quartic potential, also satisfies
$$ \int_\TT C(W)(k)\, dk = 0 $$
which can be interpreted as the conservation of the number of phonons $\int_\TT W\, dk$. 
However, this quantity has no microscopic equivalent, and does not correspond to any physical principle. Rather it is a consequence of the symmetry of the operator, 
which follows from the fact that $3$ phonon merger cannot take place (\eqref{cons2} has no solutions).
In particular, this equality does not hold for the three phonon operator. 
\medskip

Note that the first moment $k$ is preserved in the wave kinetic equation case (where $k\in\RR^n$). However, this conservation is broken here by umklapp processes.

\subsection{Entropy}
The Boltzmann phonon operators satisfy an entropy inequality, similar to Boltzmann H-Theorem in gas dynamic.
In particular, for the four phonon operator we can rewrite \eqref{eq:CC4} as follows:
\begin{eqnarray}
C(W)& = & 36\pi\int\int\int
F(k)^2
\delta(k+k_1-k_2-k_3) 
\delta(\om+\om_1-\om_2-\om_3) \nonumber \\
&&WW_1W_2W_3 [W^{-1}+W_1^{-1}-W_2^{-1}-W_3^{-1}] dk_1\, dk_2\, dk_3  \nonumber
\end{eqnarray}
and  we then see that (assuming all integrals are well defined):
\begin{eqnarray} \label{eq:Hthm}
&& \int_{\TT^1} W^{-1}(k)C(W)(k)\, dk 
\\
&& \quad \qquad =  9\pi\int \int\int\int
F(k)^2
\delta(k+k_1-k_2-k_3) 
\delta(\om+\om_1-\om_2-\om_3) \nonumber \\
 &&\quad \quad\qquad \qquad \cdot WW_1W_2W_3 [W^{-1}+W_1^{-1}-W_2^{-1}-W_3^{-1}] ^2 dk_1\, dk_2\, dk_3\, dk \nonumber\\
 && \quad \qquad \geq 0 .\nonumber
\end{eqnarray}
This inequality implies in particular that the total entropy defined as
$$H(W) = \int_\mathbb{R}\int_\TT \log(W)\, dkdx$$
increases over time.

\subsection{Stationary solutions}
It is easy to check that the distributions 
$$ W_b (k) = \frac{1}{b \omega(k)}$$
for any $ b > 0$
satisfy $C(W_b)=0$ for all the operators $C$ considered above. This fact is in accordance with equilibrium statistical mechanics (see \cite{spohn2006phonon}).
It is  more delicate to check that these are the only solutions. 
In fact it is not always true. 
\medskip

For the  four phonon collision operator  \eqref{eq:CC4}, 
we can check that 
\begin{equation}\label{eq:Wab}
W_{a,b} (k) = \frac{1}{a+b\omega(k)}
\end{equation}
is an equilibrium for all $a$, $b>0$.

Conversely, the entropy inequality  \eqref{eq:Hthm} implies that if $C(W)=0$ then $\psi(k)=W(k)^{-1}$ is a collision invariant, that is  
$$\psi(k)+\psi(k_1)=\psi(k_2)+\psi(k_3)$$
for all $k,k_1,k_2,k_3$ such that
$$  k+k_1=k_2+k_3,\quad\mbox{ and } \quad \omega(k)+\omega(k_1) = \omega(k_2)+\omega(k_3).$$

An obvious candidate is $\psi(k)=a+b \omega(k)$. 
Under general conditions on $\omega$, Spohn  proved that these are indeed the only collision invariants in dimension $N\geq 2$ \cite{spohn2006collisional}. 
The same result is proved by Lukkarinen and Spohn \cite{lukkarinen2008anomalous} in our framework (dimension $1$).

As a conclusion, \eqref{eq:Wab} are the only solutions of $C(W)=0$ for the four phonon collision operator  \eqref{eq:CC4}.
Note that the fact that we can take $a\neq 0$  is a consequence of the conservation of the number of phonons for the four phonon collision operator (which, as explained previously, follows  from the fact that equation \eqref{cons2} describing merging and splitting of phonons has no solutions).

\subsection{The linearized operator}
As mentioned in the introduction, we will be interested in the behavior of the solutions of the Boltzmann phonon equation in the neighborhood of a thermodynamical equilibrium.
Given $\Wb(k)=\frac{\Tb}{\om(k)}$, we thus introduce the linearized operator
$$ L(f) = \frac{1}{\Wb} DC(\Wb)(\Wb f)$$
(where $DC$ denotes the derivative of the operator $C$).

By differentiating the equation $C(W_{a,b})=0$ with respect to $a$ and $b$, we get:
$$ L(1)=0 \quad \mbox{ and } L(\om^{-1})=0,$$
which suggests (as will be proved later) that the kernel of $L$ is two dimensional and spanned by $1$ and $\om^{-1}$. 
In our framework, the later mode, $\om^{-1}$ is singular (not integrable) for $k=0$.
Because of natural a priori bounds on the solutions of the Boltzmann Phonon   equation, it will be easy to see that this mode is not present in the macroscopic limit. It will however play an important role in the derivation of a macroscopic model. Note that it comes from the derivation with respect to the spurious coefficient $a$ (remember that we call it spurious because it comes from the conservation of the total number of phonons at the level of the kinetic equation;  we know that actually this conservation does not take place at atomic level and it is just a consequence of losing information in the kinetic limit).

Similarly, differentiating the conservation equations
$$\int \omega C(\Wb+t \Wb f)\, dk = 0 \mbox{ and } \int   C(\Wb+t \Wb f)\, dk = 0$$
with respect to $t$, we deduce that
$$  \int L(f) \, dk = 0 ,  \mbox{ and } \int \omega^{-1} L(f)\, dk = 0.$$

The properties of $L$ will be further investigated in Section \ref{sec:L}. For now, we just state the following proposition without proof, since it is all we need to formally derive a macroscopic equation.

\begin{proposition}\label{prop:formalL}
The operator $L: L^2(\TT^1,V(k)\,dk)\tto L^2(\TT^1,V(k)^{-1}\,dk)$ (where $V$ is defined by \eqref{eq:coefV}) is a bounded self-adjoint operator which satisfies
\item[(i)] $\ker (L) = \mbox{Span}\,\{1,\omega(k)^{-1}\}$
\item[(ii)] $R(L) = \{ h\in L^2(\TT^1,V(k)^{-1}\,dk ) \, ;\, \int_\TT h(k) \, dk = \int_\TT \omega^{-1}(k) h(k)\, dk =0\,\}$
\end{proposition}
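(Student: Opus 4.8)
The plan is to first reduce the abstract definition $L(f)=\frac1{\Wb}DC(\Wb)(\Wb f)$ to an explicit, symmetric integral form by linearizing the entropy-symmetric version of \eqref{eq:CC4} recorded in \eqref{eq:Hthm}, namely $C(W)=36\pi\int\int\int F^2\,\delta\delta\;WW_1W_2W_3\,[W^{-1}+W_1^{-1}-W_2^{-1}-W_3^{-1}]$. The key observation is that at $\Wb=\Tb/\om$ the bracket equals $\Tb^{-1}(\om+\om_1-\om_2-\om_3)$, which vanishes on the support of $\delta(\om+\om_1-\om_2-\om_3)$. Hence $DC(\Wb)$ only differentiates the bracket; using $F^2=16\,\om\om_1\om_2\om_3$ from \eqref{eq:colfreq} the prefactor collapses and one is left with
\[
L(f)(k)=-C_0\,\om(k)\int\!\int \delta(k+k_1-k_2-k_3)\,\delta(\om+\om_1-\om_2-\om_3)\,(\om f+\om_1f_1-\om_2f_2-\om_3f_3)\,dk_1dk_2dk_3,
\]
with $C_0=576\pi\Tb^2>0$. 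From this, $L(1)=0$ and $L(\om^{-1})=0$ are immediate (the integrand vanishes on the energy shell), and after integrating out the two $\delta$'s by parametrizing the resonant manifold \eqref{cons1} one reads off the splitting $L(f)=\int_\TT K(k,k')f(k')\,dk'-V(k)f(k)$, with $V$ the collision frequency \eqref{eq:coefV} and $K$ the sign-changing cross section \eqref{eq:KernelK}; the identity $V(k)=\int_\TT K(k,k')\,dk'$ is forced by $L(1)=0$.

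Second, I would record the associated quadratic form. The constraint measure $\delta(k+k_1-k_2-k_3)\,\delta(\om+\om_1-\om_2-\om_3)\,dk\,dk_1dk_2dk_3$ is invariant under $k\leftrightarrow k_1$, under $k_2\leftrightarrow k_3$, and under $(k,k_1)\leftrightarrow(k_2,k_3)$, so a standard symmetrization of the pairing $\la L(f),g\ra=\int_\TT L(f)g\,dk$ gives
\[
\la L(f),g\ra=-\frac{C_0}{4}\int\!\int\!\int\!\int \delta\delta\,(\om f+\om_1f_1-\om_2f_2-\om_3f_3)(\om g+\om_1g_1-\om_2g_2-\om_3g_3)\,dk\,dk_1dk_2dk_3.
\]
This form is manifestly symmetric, which yields self-adjointness with respect to the duality between $L^2(\TT^1,V\,dk)$ and $L^2(\TT^1,V^{-1}\,dk)$ induced by the pairing $\int fg\,dk$, and it is non-positive. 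Therefore $f\in\ker L$ iff $\om f+\om_1f_1-\om_2f_2-\om_3f_3=0$ a.e.\ on the energy–momentum shell, i.e.\ iff $\psi:=\om f$ is a collision invariant. Invoking the one-dimensional classification of collision invariants of Lukkarinen–Spohn \cite{lukkarinen2008anomalous} ($\psi=a+b\om$) forces $f=b+a\om^{-1}\in\mathrm{Span}\{1,\om^{-1}\}$, which proves (i).

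Third, for boundedness and (ii) I would conjugate $L$ onto the single space $L^2(\TT^1,dk)$ by the isometries $f\mapsto\sqrt V\,f$ and $h\mapsto\sqrt{V^{-1}}\,h$. Since multiplication by $V$ is itself an isometry $L^2(V\,dk)\to L^2(V^{-1}\,dk)$, the conjugated operator takes the form $\hat L=-I+\hat{\mathcal K}$ on $L^2(dk)$, where $\hat{\mathcal K}$ has the \emph{symmetric} kernel $\tilde K(k,k')=K(k,k')/\sqrt{V(k)V(k')}$. Boundedness of $L$ is then equivalent to boundedness of $\hat{\mathcal K}$, which I would obtain by a weighted Schur test, i.e.\ a bound on $\sup_k\int_\TT |K(k,k')|/\sqrt{V(k)V(k')}\,dk'$, and for compactness a Hilbert–Schmidt bound $\tilde K\in L^2(\TT\times\TT)$. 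With $\hat{\mathcal K}$ compact and self-adjoint, the Fredholm alternative gives $R(\hat L)=(\ker\hat L)^\perp$; since $\ker\hat L=\mathrm{Span}\{\sqrt V,\sqrt V\,\om^{-1}\}$, undoing the conjugation yields $R(L)=\{h:\int_\TT h\,dk=\int_\TT\om^{-1}h\,dk=0\}$, which is (ii). The inclusion $\subseteq$ is already guaranteed by the two identities $\int_\TT L(f)\,dk=\int_\TT\om^{-1}L(f)\,dk=0$ noted before the proposition; the Fredholm step supplies the reverse.

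The main obstacle is the kernel estimate in the third step, together with the degeneracy $V(k)\sim|k|^{5/3}$ as $k\to0$. These require a careful parametrization of the resonant manifold \eqref{cons1} and sharp control of $K$ and $V$ near the singular point $k=0$, where $\om$ fails to be smooth — precisely the delicate analysis of \cite{lukkarinen2008anomalous}, which I would carry out in Section \ref{sec:L}. By contrast, the algebraic reduction of $L$, the self-adjointness, and the identification of $\ker L$ are comparatively routine once the symmetrized quadratic form is in hand.
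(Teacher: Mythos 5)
Your overall architecture coincides with the paper's: the explicit formula \eqref{eq:L000} for $L$, the symmetrized nonpositive quadratic form \eqref{eq:Lp}, the identification of $\ker L$ via the Lukkarinen--Spohn classification of collision invariants (Theorem \ref{thm:LS}), the conjugation $L_0=V^{-1/2}L(V^{-1/2}\cdot)=K_0-I$, and compactness of $K_0$ plus the Fredholm alternative for the range. The one step that would fail as stated is your route to compactness via a Hilbert--Schmidt bound $\tilde K\in L^2(\TT\times\TT)$. After integrating out the two delta functions the kernel decomposes as $K=\om(2K_2-K_1)\om$ with $K_1=4\,\mathbb{1}(F_->0)/\sqrt{F_-}$ (see \eqref{eq:k1k2}); $F_-$ vanishes to first order across a curve in the interior of $(0,1)^2$, so $|K_1|^2\sim 1/F_-$ fails to be integrable across that curve, and the weights $\om V^{-1/2}\sim(\sin\pi k)^{1/6}$, which act only at $k=0,1$, do not help there. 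Hence the conjugated kernel is not square integrable and $K_0$ is not Hilbert--Schmidt. Relatedly, you locate the delicate analysis ``near the singular point $k=0$,'' but for compactness the genuine difficulty is this interior inverse-square-root singularity along the resonance curve $\{F_-=0\}$ (together with the corner degeneracy of $F_+$). The paper sidesteps the issue by quoting Propositions 4.3 and 4.4 of \cite{lukkarinen2008anomalous} (restated as Proposition \ref{prop:properties_integral_kernels}), whose proof truncates the kernel and controls the truncation error by Schur-type bounds --- the same device as your boundedness argument --- rather than by a Hilbert--Schmidt estimate. Your conclusion is therefore recoverable, but only by replacing the Hilbert--Schmidt step with such an argument (or by citing the result, as the paper does). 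Everything else --- the linearization computation exploiting that the bracket vanishes on the energy shell, the $\tfrac14$-symmetrization, the reduction of $\ker L$ to collision invariants, and the deduction of (ii) from $R(L_0)=(\ker L_0)^\perp$ --- is correct and matches the paper.
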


We end this section by deriving the explicit formula for the operator $L$:
A direct computation gives (when $W(k)=\frac{\Tb}{\om(k)}$):
\begin{align*}
& DC(W)(W f)  \\
& = 36\pi\int\int\int F(k,k_1,k_2,k_3)^2
\delta(k+k_1-k_2-k_3) 
\delta(\om+\om_1-\om_2-\om_3) \nonumber \\
& \qquad \times W W_1W_2W_3\Big [ 
f_3W_3^{-1}+ f_2W_2^{-1} -f_1W_1^{-1} - fW^{-1}\Big ] \, dk_1\, dk_2\, dk_3 \\
& = 36\pi \Tb^3 \int\int\int \frac{F(k,k_1,k_2,k_3)^2}{\omega\omega_1\omega_2\omega_3}
\delta(k+k_1-k_2-k_3) 
\delta(\om+\om_1-\om_2-\om_3) \nonumber \\
&\qquad\times \Big [ 
 \omega_3 f_3+ \omega_2f_2 -\omega_1 f_1 - \omega f\Big ] \, dk_1\, dk_2\, dk_3 
\end{align*}
Using  \eqref{eq:colfreq}, we see that
$$
\frac{F(k,k_1,k_2,k_3)^2}{\omega\omega_1\omega_2\omega_3} = 16$$
and we deduce:
\begin{align} 
L(f)& = 576 \pi \Tb^2 
 \omega \int\int\int \delta(k+k_1-k_2-k_3) 
\delta(\om+\om_1-\om_2-\om_3) \nonumber \\
&\quad\qquad  \qquad\qquad \qquad \times \Big [  \omega_3 f_3+ \omega_2f_2 -\omega_1 f_1 - \omega f\Big ] \, dk_1\, dk_2\, dk_3 . \label{eq:L000}
\end{align}

\subsection{Formal asymptotic limit}
We now have all the ingredient to perform the usual asymptotic analysis and attempt to derive (formally) a diffusion equation from the Boltzmann phonon equation (we will see however that it fails in our framework). 
The starting point is the following rescaled equation in the FPU-$\beta$ chain framework detailed above:
\begin{equation}\label{eq:kin0}
\eps^2 \pa_t W +\eps  \om'(k)\pa_x W = C(W),
\end{equation}
where $C$ is the four phonon collision operator \eqref{eq:CC4} with collision frequency given by
\eqref{eq:colfreq},
and 
we consider a solution which is a perturbation of a thermodynamical equilibrium:
$$ W^\eps(t,x,k)= \Wb(k) (1+\eps f^\eps(t,x,k))
$$
where $\Wb=\frac{\Tb}{ \omega(k)}$ for some constant $\Tb>0$.

We introduce the operators
$$  Q(f,f)=\frac{1}{\Wb} D^2C(\Wb)(\Wb f,\Wb f),$$
and 
$$ R(f,f,f)  =\frac{1}{\Wb} D^3C(\Wb)(\Wb f,\Wb f,\Wb f)$$
so that (we recall that $C$ is a cubic operator):
$$ \frac{1}{\Wb} C(W^\eps) = \eps L( f) + \eps^2\frac{1}{2}Q (f, f) + \eps^3 \frac{1}{6} R(f,f,f)$$
where $L$ is given by \eqref{eq:L000}.

The function $f^\eps$ solves
\begin{equation}\label{eq:linear}
\eps^2 \pa_t f^\eps + \eps \omega'(k) \pa_x f^\eps =   L(f^\eps) + \eps\frac 1 2 Q(f^\eps,f^\eps) + \eps^2\frac 1 6  R(f^\eps,f^\eps,f^\eps).
\end{equation}

Taking the limit $\eps\to0$ in (\ref{eq:linear}), we get 
$$ L(f^0)=0$$
and so Proposition \ref{prop:formalL} $(i)$ implies
$$f^0(t,x,k) = T(t,x) + S(t,x)\omega(k)^{-1}.$$
Since equation \eqref{eq:linear} preserves the $L^1$ norm, it is natural to assume that $f^0(t,x,k)\in L^1(\RR\times \TT)$. We note however  that  $\om(k)\sim |k|$ as $|k|\to 0$, and so we must have 
$$ S(t,x)=0.$$

Next, integrating (\ref{eq:linear}) with respect to $k$ yields
$$ 
\pa_t T^\eps +  \pa_x J^\eps= 0 
$$
with
$$ T^\eps= \la f^\eps\ra, \quad J^\eps(t,x)=\frac{1}{\eps}\la \om' f^\eps\ra
$$
where we use the notation $\la \cdot \ra = \int_\TT \cdot \, dk$.

We now need to compute $J = \lim_{\eps\to 0 } J^\eps$.
Recalling that $L$ is a self adjoint operator, we write
$$ 
 \eps^{-1}  \la \om' f^\eps\ra = \eps^{-1}\la L^{-1}(\om') L(f^\eps)\ra 
$$
and using (\ref{eq:linear}), we replace  $L(f^\eps)$ in the right hand side:
$$
 \eps^{-1}  \la \om' f^\eps\ra = \la L^{-1}(\om') \om' \pa_x f^\eps \ra  -  \frac{1}{2}\la L^{-1}(\om' ) Q(f^\eps,f^\eps) \ra + \mathcal O(\eps).
$$
Formally, using the fact that $\lim_{\eps\to0} f^\eps(t,x,k) = T(t,x)$ (since $S=0$),
we thus get
$$ \lim_{\eps\to 0 } \eps^{-1}  \la \om' f^\eps\ra =  \la L^{-1}(\om') \om'\ra\pa_x T  -  \frac{1}{2}\la L^{-1}(\om' ) Q(T,T) \ra. $$
Finally, a direct computation gives
\begin{align*} 
Q(f,f)& = 576 \pi \Tb^2 
 \omega \int\int\int \delta(k+k_1-k_2-k_3) 
\delta(\om+\om_1-\om_2-\om_3) \nonumber \\
&\quad\qquad   \Big [  
2(\om-\om_3)[f_1f_2-ff_3]
+(\om+\om_1)[f_2f_3-ff_1]
\Big ] \, dk_1\, dk_2\, dk_3, 
\end{align*}
and it is readily seen that $ Q(T,T)=0$.
We thus get the following relation
$$ J =  \la L^{-1}(\om') \om'\ra\pa_x T $$
which is Fourier's law with 
% linear diffusion equation:
%$$ 
%\pa_t T+ \pa_x \left( \la L^{-1}(\om') \om'\ra\pa_x T \right) = 0 .
%$$
%This is the heat equation with 
diffusion coefficient
$$ \kappa=- \la L^{-1}(\om') \om'\ra > 0.$$

We conclude this section with the following remarks:
\begin{enumerate}
\item  The non linear term $ Q(T,T)=0$ does not contribute to the limiting equation. In the next  section, we will drop this term altogether.
\item The fact that $S=0$ will need to be addressed very carefully in the rigorous proof. In particular, we will see that while we do indeed have $f^0=T$, the term $S$ plays a significant role in the rigorous derivation of the diffusion equation (see next section). 
\item Perhaps the most important remark is that one need to check that $\kappa$ is well defined.
In fact, it can be proved that the integrand in the definition of the diffusion coefficient behaves like $|k|^{-5/3}$ for small $k$. It follows that
 $$ \kappa=+\infty$$
so the limit presented above does not give any equation for the evolution of $T$.
Such a phenomenon is not uncommon, and based on previous work (see \cite{Mouhot11}), we expect that by taking a different time scale in \eqref{eq:linear} we can derive an anomalous diffusion equation for the evolution of the temperature $T$. This is of course the goal of this paper as explained in the next section.
\end{enumerate}

\section{Main result}
\label{sec:main_result}

In view of the formal asymptotic limit detailed in the previous section, we now consider the following linear equation:
\begin{equation}\label{eq:linear2}
\eps^\alpha \pa_t f^\eps + \eps \omega'(k) \pa_x f^\eps = \Tb^2 L(f^\eps), \qquad x\in\RR, \; k\in \TT
\end{equation}
where 
$$\omega(k)=|\sin(\pi k)|$$ 
and
$L$ is defined by
\begin{align} 
L(f)& =  
 \omega \int\int\int \delta(k+k_1-k_2-k_3) 
\delta(\om+\om_1-\om_2-\om_3) \nonumber \\
&\quad\qquad   \Big [  \omega_3 f_3+ \omega_2f_2 -\omega_1 f_1 - \omega f\Big ] \, dk_1\, dk_2\, dk_3 . \label{eq:L}
\end{align} 
Note also that we have made $L$ independent of the equilibrium temperature $\Tb$ and 
set all other constant in $L$ equal to $1$ for the sake of clarity.

The existence of a solution to this equation is fairly classical. We recall it for the sake of completeness in Proposition \ref{prop:Cauchyproblem}.

Our main result is then the following: 
\begin{theorem}[Fractional diffusion limit for the linearised equation]\label{thm:main}
Take $\alpha=\frac 8 5$ and  let $f^\eps$ be a solution of equation \eqref{eq:linear2} with initial data $f_0\in L^2 (\RR\times\TT)$. Then
$$f^\eps(t,x,k) \rightharpoonup T(t,x)  \qquad L^\infty((0,\infty); L^2(\RR\times \TT))\mbox{-weak}\,*$$
where $T$ solves the fractional diffusion equation
\begin{equation}\label{eq:difffT}
\partial_t T +\frac{\kappa}{\Tb^{6/5}}(-\Delta_x)^{4/5}T=0 \qquad \mbox{ in } (0,\infty)\times\RR
\end{equation}
with initial condition
\begin{equation}\label{eq:initT}
T(0,x) =T_0(x):=  \int^1_0  f_0(x,k) \,  dk.
\end{equation}
The diffusion coefficient $\kappa\in(0,\infty)$ is given by 
$$\kappa = \kappa_1-\frac{\kappa_2^2}{\kappa_3}\in(0,\infty)$$
where $\kappa_1, \kappa_2, \kappa_3$ are defined in Proposition \ref{prop:aeps1}.
\end{theorem}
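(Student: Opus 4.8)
The plan is to combine soft energy estimates, which yield compactness and pin down the limit as a local equilibrium, with a Fourier--Laplace analysis of the rescaled equation, which produces the fractional operator and the explicit diffusion constant. I would begin with the a priori bounds. The transport operator $\eps\om'(k)\pa_x$ is skew-adjoint on $L^2(\RR\times\TT)$, and a Boltzmann-type symmetrization of \eqref{eq:L} shows that $L$ is self-adjoint with
$$\int_\TT f\,L(f)\,dk = -\frac14\iiiint \delta(k+k_1-k_2-k_3)\,\delta(\om+\om_1-\om_2-\om_3)\,(\om f+\om_1 f_1-\om_2 f_2-\om_3 f_3)^2\,dk\,dk_1dk_2dk_3\le 0.$$
Multiplying \eqref{eq:linear2} by $f^\eps$ and integrating therefore gives $\eps^\alpha\frac{d}{dt}\|f^\eps\|_{L^2}^2 = 2\Tb^2\int_{\RR\times\TT}f^\eps L(f^\eps)\,dx\,dk\le 0$, whence $\|f^\eps(t)\|_{L^2}\le\|f_0\|_{L^2}$ uniformly in $\eps$ and $\int_0^\infty\!\big(-\!\int f^\eps L(f^\eps)\big)dt = O(\eps^\alpha)$. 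The uniform bound yields weak-$*$ compactness in $L^\infty((0,\infty);L^2(\RR\times\TT))$; the dissipation bound forces any limit $f^0$ into $\ker L$, so $f^0 = T(t,x)+S(t,x)\om(k)^{-1}$ by Proposition \ref{prop:formalL}(i). Because $\om^{-1}\sim|k|^{-1}$ is not square integrable near $k=0$ while $f^0\in L^2$, the singular mode must vanish, $S\equiv0$, and $f^0 = T(t,x)$.

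To identify the equation for $T$ I would pass to the Fourier variable $\xi$ dual to $x$ and the Laplace variable $p$ dual to $t$, turning \eqref{eq:linear2} into the resolvent identity
$$\big(\eps^\alpha p + \eps i\xi\,\om'(k) - \Tb^2 L\big)\tilde f^\eps = \eps^\alpha\hat f_0 .$$
Testing this against the two collision invariants $1$ and $\om^{-1}$ and using $\la L g\ra = \la\om^{-1}L g\ra = 0$ removes the (non-invertible) operator $L$ from the conservation relations and leaves a closed system for $\la\tilde f^\eps\ra$ and $\la\om^{-1}\tilde f^\eps\ra$, the fluxes being expressed through the resolvent correction $\tilde f^\eps-(\tilde T+\tilde S\,\om^{-1})$. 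The quantitative heart of the argument is the degeneracy $V(k)\sim c|k|^{5/3}$ of the collision frequency as $k\to0$ (from \cite{lukkarinen2008anomalous}) combined with $\om'(0^\pm)=\pm\pi$: writing the correction as $\approx(\eps^\alpha p + \eps i\xi\om' + \Tb^2 V)^{-1}\times(\text{source})$ and rescaling $k=(\eps|\xi|)^{3/5}\eta$, the dominant balance $\eps i\xi\om'\sim\Tb^2 V$ concentrates all the mass in this boundary layer and produces exactly the symbol $|\xi|^{8/5}$ at the order $\eps^\alpha=\eps^{8/5}$, while the regular region contributes only at order $\eps^2\ll\eps^{8/5}$ and drops out. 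Scaling $\Tb^2$ out of $V$ in the layer integral yields the prefactor $\Tb^{-6/5}$.

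Because $\ker L$ is two dimensional, the limiting system couples the two macroscopic unknowns. The layer analysis produces the three constants $\kappa_1,\kappa_2,\kappa_3$ of Proposition \ref{prop:aeps1} (the self-interaction of the $T$-flux, the cross term, and the self-interaction associated with the singular mode). Eliminating the singular component $S^\eps$, which tends to $0$, amounts to taking the Schur complement of the $2\times2$ system, and gives the effective coefficient $\kappa=\kappa_1-\kappa_2^2/\kappa_3$ together with the transformed equation $p\tilde T + \Tb^{-6/5}\kappa|\xi|^{8/5}\tilde T = \hat T_0$. Inverting the Fourier--Laplace transform recovers \eqref{eq:difffT} with initial data \eqref{eq:initT}, and uniqueness of this limit promotes the convergence to the full weak-$*$ statement.

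The main obstacle, where essentially all the work lies, is to make the boundary-layer asymptotics rigorous uniformly in $\eps$: one must bound the resolvent $(\eps^\alpha p + \eps i\xi\om' - \Tb^2 L)^{-1}$ in the degenerate region near $k=0$, show that the gain part of $L$ only perturbs the layer at lower order there, and justify replacing $V$ and $\om'$ by their leading profiles with errors that are integrable and vanish in the limit. Equally delicate is establishing rigorously (not merely formally from $L^2$-boundedness) that the singular mode $S^\eps$ disappears in the limit, and checking that the sign-changing cross section and the two-dimensional kernel of $L$ do not obstruct the invertibility needed to construct the auxiliary correctors and the Schur complement --- precisely the difficulty singled out in the introduction.
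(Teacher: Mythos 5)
Your proposal is correct and follows essentially the same route as the paper: uniform $L^2$ and dissipation estimates forcing the limit into $\ker L$ with the singular mode suppressed, a Fourier--Laplace moment system tested against the two collision invariants, the boundary-layer rescaling $|k|^{5/3}\sim\eps|\xi|$ producing the symbols $\kappa_1|\xi|^{8/5}$, $\kappa_2|\xi|$, $\kappa_3|\xi|^{2/5}$, and elimination of the singular component by the Schur complement $\kappa=\kappa_1-\kappa_2^2/\kappa_3$ (the paper implements this by multiplying the two moment equations by $a_3^\eps$ and $a_2^\eps$ and subtracting, precisely because no a priori bound on $\widehat S^\eps$ is available). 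The only slight imprecision is that the rescaled $S^\eps$ converges to the nontrivial limit $-(\kappa_2/\kappa_3)(-\Delta)^{3/10}T$ rather than to zero --- it is $\eps^{3/5}S^\eps$ that vanishes --- but this does not affect the argument.
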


First, we note that it is enough to consider the case 
$$\Tb=1$$
since we can recover the general case by a simple rescaling $t\mapsto \Tb^2 t$, $x\mapsto \Tb^2 x$.

The main difficulty here, compared with previous work devoted to fractional diffusion limit of kinetic equations, is the fact that the kernel of $L$ is spanned by $1$ and $\omega(k)^{-1}$. This last mode should not appear in the limit since it is not square integrable, but it will nevertheless play an important role.

In fact, we will prove that $f^\eps$ can be expanded as follows:
$$ f^\eps (t,x,k) = T^\eps(t,x) + \eps^{\frac{3}{5}} S^\eps(t,x) \omega(k)^{-1}  + \eps^{\frac 4 5 } h^\eps(t,x,k)$$
where $T^\eps$ is bounded in $L^\infty((0,\infty);L^2(\RR))$, $h^\eps$ is bounded in $L^2_V(\TT\times\RR)$ and  $S^\eps$  converges in some weak sense to a non trivial function.
More precisely we will prove in Section \ref{sec:Seps}:
\begin{proposition}\label{prop:convS}
The function $S^\eps(t,x)$ converges in distribution sense to
$$  S(t,x)  =- \frac{\kappa_2}{\kappa_3} (-\Delta)^{3/10} T(t,x).$$
\end{proposition}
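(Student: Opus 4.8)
The plan is to extract from the kinetic equation the balance law governing the singular mode $S^\eps$ and to show that, under the scaling $\alpha=\frac 85$, this balance degenerates as $\eps\to 0$ into an algebraic (time-independent) relation that slaves $S$ to $T$. The natural tool is the second conservation law $\la \om^{-1}L(f^\eps)\ra=0$ (with $\la\cdot\ra=\int_\TT\cdot\,dk$), which isolates precisely the $\om^{-1}$ component of $\ker L$ carried by $S^\eps$ in the decomposition $f^\eps = T^\eps + \eps^{3/5}S^\eps\om^{-1}+\eps^{4/5}h^\eps$. I would work after a Fourier transform in $x$ (variable $\xi$), so that the target operators appear as the multipliers $(-\Delta)^{4/5}\leftrightarrow|\xi|^{8/5}$ and $(-\Delta)^{3/10}\leftrightarrow|\xi|^{3/5}$.

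Rewriting \eqref{eq:linear2} as $L(f^\eps)=\eps^{8/5}\pa_t f^\eps+\eps\om'\pa_x f^\eps$ (equivalently $\eps^{4/5}L(h^\eps)$ on the left, since $L$ annihilates $T^\eps$ and the $\om^{-1}$ mode), I would pair against $\om^{-1}$ and use $\la\om^{-1}L(h^\eps)\ra=0$ to obtain the balance
$$\eps^{8/5}\pa_t\la\om^{-1}f^\eps\ra + \eps\,\pa_x\la\om^{-1}\om' f^\eps\ra = 0,$$
which must be read through the weighted projection defining $T^\eps$ and $S^\eps$, since the bare moment $\la\om^{-1}f^\eps\ra$ is itself divergent at $k=0$. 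Reorganising according to the three pieces of $f^\eps$, the key observations are that, after normalisation, the time-derivative term carries a surplus factor of $\eps$ (because the $\om^{-1}$ content of $f^\eps$ is $O(\eps^{3/5})$) and therefore drops out in the limit, while the contributions to the flux $\la\om^{-1}\om' f^\eps\ra$ coming from $T^\eps$ and from the singular mode vanish by the oddness of $\om^{-1}\om'$ (and of $\om^{-2}\om'$) about $k=0$. What survives is exactly the pair of anomalously scaled fluxes whose limits are the content of Proposition \ref{prop:aeps1}.

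I would then invoke Proposition \ref{prop:aeps1} to pass to the limit in these surviving fluxes: the two contributions converge, in Fourier variables, to terms proportional to $\kappa_3\,\hat S$ and $\kappa_2\,|\xi|^{3/5}\hat T$ respectively, so that the degenerate balance becomes $\kappa_3\,\hat S = -\kappa_2\,|\xi|^{3/5}\hat T$, i.e. $\hat S = -\tfrac{\kappa_2}{\kappa_3}|\xi|^{3/5}\hat T$, which inverts to the claimed $S=-\tfrac{\kappa_2}{\kappa_3}(-\Delta)^{3/10}T$. To make this precise in the distributional sense I would test the balance against an arbitrary $\phi(t,x)$ and identify each limit using the a priori bounds underlying the decomposition: $T^\eps$ bounded in $L^\infty((0,\infty);L^2(\RR))$ with $T^\eps\wto T$ (from the proof of Theorem \ref{thm:main}), $h^\eps$ bounded in $L^2_V$, and $S^\eps$ weakly convergent. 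As a consistency check, substituting this relation into the companion balance for $T^\eps$ reproduces exactly the effective coefficient $\kappa=\kappa_1-\tfrac{\kappa_2^2}{\kappa_3}$ of Theorem \ref{thm:main}.

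The main obstacle is the control of the anomalous fluxes near $k=0$, which is where the entire fractional scaling originates. Because the collision frequency degenerates like $|k|^{5/3}$, both $h^\eps$ and the singular mode concentrate there and the naive moments (such as $\la\om^{-1}f^\eps\ra$ or $\la\om' h^\eps\ra$) diverge; their finite, anomalously scaled limits can only be extracted through the boundary-layer / auxiliary-function analysis of Proposition \ref{prop:aeps1}, together with a uniform handling of the non-$L^2$ mode $\om^{-1}$. A secondary but genuine difficulty is to justify that the time-derivative term is truly negligible even though $S^\eps$ is known to converge only weakly, which requires that the surplus power of $\eps$ be exploited before invoking any compactness in $t$.
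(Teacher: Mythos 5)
Your formal mechanism is the right one --- it is exactly the heuristic the paper itself gives in Section \ref{sec:main_result} (equations \eqref{eq:diffT}--\eqref{eq:diffS}) --- but as a proof it has a genuine gap: it presupposes the compactness it is supposed to establish. You list ``$S^\eps$ weakly convergent'' among the a priori bounds used to pass to the limit, but no bound on $S^\eps$ is available: by \eqref{eq:Sepsdef}, $S^\eps=\eps^{-3/5}\la V\ra\tilde S^\eps$ with only $\tilde S^\eps$ bounded, so a priori $S^\eps$ could blow up like $\eps^{-3/5}$. The paper's proof does not pass to the limit term by term in the projected balance; it \emph{solves} for $\widehat S^\eps$ from \eqref{eq:asymptotic2}, $\widehat S^\eps=\frac{1}{a_3^\eps}(-\mathcal F_2^\eps-R_2^\eps-a_2^\eps\widehat T^\eps)$, and the entire weight of the argument is carried by Lemma \ref{lem:a_3}, a quantitative \emph{lower} bound $|a_3^\eps(p,\xi)|\geq c\,\eps^{6/25}p^{2/5}+c|\xi|^{2/5}$. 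This lower bound is what makes the quotients $a_2^\eps/a_3^\eps$, $\mathcal F_2^\eps/a_3^\eps$ and $R_2^\eps/a_3^\eps$ controllable uniformly in $\eps$ (the last one in particular: $R_2^\eps\to0$ alone does not imply $R_2^\eps/a_3^\eps\to0$, and the proof needs a case split $|\xi|\lessgtr\eps^{3/5}$ matching the two pieces of the lower bound). Your proposal contains no substitute for this step, and Proposition \ref{prop:aeps1} (upper bounds and pointwise limits of the symbols) cannot supply it.

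A secondary but real problem is the pairing you start from. The moments $\la\om^{-1}f^\eps\ra$ and $\la\om^{-1}\om'f^\eps\ra$ are genuinely divergent, not merely ``to be read through the projection'': $\om^{-1}\om'\sim 1/k$ against the bounded part of $f^\eps$ is only a principal value, and against the $\om^{-1}$ mode it is $\sim k^{-1}|k|^{-1}$, which no symmetrization rescues. The paper avoids this by never pairing with $\om^{-1}$ directly: it multiplies the resolvent identity \eqref{eq:ll} by $K(k',k)\eps^{3/5}/\om(k')$ and integrates, which by $\int K(k',k)\om(k')^{-1}dk'=V(k)/\om(k)$ produces the integrable weight $V/\om\sim|k|^{2/3}$. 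If you want to salvage your route, you would need to (i) replace the bare $\om^{-1}$ pairing by this $V/\om$-weighted one, and (ii) prove a lower bound on the coefficient of $\widehat S^\eps$ in the resulting identity before any limit is taken --- at which point you have reconstructed the paper's proof.
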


In particular, as mentioned above, this means that the mode $\omega(k)^{-1}$ vanishes in the limit and the macroscopic behavior of the phonon distribution is completely described by $T=\lim_{\eps\to 0}T^\eps$.
However, projecting equation \eqref{eq:linear2} onto the constant mode of the kernel of $L$, we will find the following equation of the evolution of $T$:
\begin{equation}\label{eq:diffT}
\pa_t T + \kappa_1 (-\Delta)^{4/5} T + \kappa_2 (-\Delta)^{1/2}S =0.
\end{equation}
We see that $S=\lim_{\eps \rightarrow 0} S^\eps$ plays a role in the evolution of $T$.
To understand this, we note (anticipating a bit on the result of the next section) that  the reason we are observing anomalous diffusion phenomena here (as opposed to standard diffusion as described in the previous section), is the fact that phonons with frequency $k$ close to zero encounter very few collisions (degenerate collision frequency).
And the term $ \eps^{\frac{3}{5}} S^\eps(t,x) \omega(k)^{-1}$, while small, is heavily concentrated around $k=0$ (non integrable singularity at $k=0$).
The competition between the smallness and the singularity gives rise to a term of order $1$ in the equation.

In order to describe the evolution of $T$, we now need to obtain an equation for $S$.
By projecting equation \eqref{eq:linear2} onto the $\omega(k)^{-1}$ mode of the kernel of $L$, we will 
prove that:
\begin{equation}\label{eq:diffS} 
 \kappa_2 (-\Delta)^{1/2} T + \kappa_3 (-\Delta)^{1/5}S = 0 .
 \end{equation}
We note that there is no $\pa_t S$ in \eqref{eq:diffS} (unlike the corresponding equation for $T$). The reason is that due to the singularity of $\omega(k)^{-1}$ for $k=0$, the quantity $S$ diffuses faster than $T$ (so we would have to take a smaller $\alpha$ in \eqref{eq:linear2} in order to observe the diffusion of $S$).  At our time scale (given by $\alpha  = \frac 8 5$), $S$ has thus already reached equilibrium, and can be expressed (in view of \eqref{eq:diffS}) as
$$  S  =- \frac{\kappa_2}{\kappa_3} (-\Delta)^{3/10} T .$$
Inserting this expression into \eqref{eq:diffT}, we find
$$ \pa_t T +  \kappa (-\Delta)^{4/5} T  =0$$
where
$\kappa= \kappa_1-\frac{\kappa_2^2}{\kappa_3}$. 
Of course, we will show that $\kappa>0$ (once the explicit expressions for the $\kappa_i$ are given, it will be a very simple consequence of Cauchy-Schwarz inequality - see Lemma \ref{lem:kappapos}). It is interesting to note that the effect of the mode $\omega^{-1}$ on the macroscopic equation is to reduce the diffusion coefficient (and thus to slow down the diffusion).
This can be understood by noting that the fact that the kernel of $L$ does not contain only the natural constant mode, is due to the lack of merging $k+k_1+k_2 \to k_3$ and splitting $k\to k_1+k_2+k_3$ interactions for phonons in the non linear collision operator $C$ (fewer interactions $\Rightarrow$ slower relaxation).

\section{Properties of the operator $L$}\label{sec:L}
The asymptotic behavior of the solution of \eqref{eq:linear2} depends very strongly on the properties of the operator $L$.
This operator is studied in great detail in  \cite{lukkarinen2008anomalous}, and we will recall their main results in this section.

The operator $L$ can be written as
$$ L(f)=\int K(k,k') f(k')\, dk' - V(k)f(k)$$
where
\begin{align}
K(k,k') &  = \om(k)\om(k') \int_\TT  \; 2 \, \delta(\om(k)+\om(k_1)-\om(k')-\om(k+k_1-k'))\nonumber \\
& \qquad\qquad \quad \qquad -\delta(\om(k)+\om(k')-\om(k_1)-\om(k+k'-k_1))\, dk_1\label{eq:KernelK}
\end{align}
and 
\begin{equation}\label{eq:coefV}
V(k)=\om(k)^2 \int_{\TT^2 } \delta(\om(k)+\om(k_1)-\om(k')-\om(k+k_1-k'))\, dk_1 dk'.
\end{equation}
The fact that $\int_{\TT} L(f)\,dk = 0 $ for all $f$ implies 
$$V(k)=\int_\TT K(k',k)\, dk'$$
(this equality can be checked also from the formula for $K$ and $V$, but it is much easier this way)
and
a short computation shows that 
$$K(k,k') = K(k',k).$$
In particular, $L$ is a self adjoint operator in $L^2(\TT)$ and $-L$ is positive since we have
\begin{eqnarray} 
- \int_{\TT} L(f)f\, dk & = & 
\frac1 4  \int\int\int\int
\delta(k+k_1-k_2-k_3) 
\delta(\om+\om_1-\om_2-\om_3) \nonumber \\
&& [\om_3 f_3 +\om_2 f_2 -\om_1 f_1 -\om f]^2 \,dk\,  dk_1\, dk_2\, dk_3 \label{eq:Lp}\\
&\geq& 0\nonumber 
\end{eqnarray}
for all $f$.
One of our goals will be to improve this inequality and show that $L$ has a spectral gap property in the appropriate functional spaces.
For that, we will need to show that the integral operator
\begin{equation}\label{eq:KV}
K(f)=\int K(k,k') f(k')\, dk'
\end{equation}
is a compact operator (in an appropriate functional spaces)

The first step, in view of  \eqref{eq:KernelK} is to study the solution set of the equation of conservation of energy:
\begin{equation}\label{eq:consenergy1}
\om(k)+\om(k_1)=\om(k')+\om(k+k_1-k').
\end{equation}
We recall the following result:
\begin{proposition}[\cite{lukkarinen2008anomalous}]
The equation \eqref{eq:consenergy1}
has the trivial solutions $ k'=k$ and $ k'=k_1$, and the (non trivial) solution
$$ k_1 = h(k,k')$$
where 
$$h(k,k')=\frac{k'-k}{2}+2\arcsin\left(\tan\frac{|k'-k|}{4}\cos\frac{k+k'}{4}\right)$$
(and there are no other solutions of  \eqref{eq:consenergy1}).
\end{proposition}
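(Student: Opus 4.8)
The plan is to solve the energy-conservation equation \eqref{eq:consenergy1} for $k_1$ with $k$ and $k'$ held fixed, reducing it to a one-dimensional trigonometric equation. Writing $k_3 = k + k_1 - k'$ (so that momentum $k + k_1 = k' + k_3$ is automatically conserved) and recalling $\omega(k) = |\sin \pi k|$, I would first bring all four arguments to their representatives in $[0,1)$, where $\sin \pi(\cdot)\ge 0$, so that $\omega$ loses its absolute value. The price is that momentum conservation becomes $k + k_1 = k' + k_3 + m$ for an integer $m \in \{-1,0,1\}$ (an umklapp shift), because folding $k+k_1-k'$ back into $[0,1)$ can change its value by an integer. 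This splits the analysis into three cases according to $m$.

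In each case I would apply the sum-to-product identities
$$\sin \pi k + \sin \pi k_1 = 2 \sin\frac{\pi(k+k_1)}{2}\cos\frac{\pi(k-k_1)}{2}, \qquad \sin \pi k' + \sin \pi k_3 = 2 \sin\frac{\pi(k'+k_3)}{2}\cos\frac{\pi(k'-k_3)}{2}.$$
For $m=0$ one has $\sin\frac{\pi(k'+k_3)}{2} = \sin\frac{\pi(k+k_1)}{2}$, and after dividing by this common (generically nonzero) factor the equation reduces to $\cos\frac{\pi(k-k_1)}{2} = \cos\frac{\pi(k'-k_3)}{2}$; together with $k+k_1 = k'+k_3$ this yields precisely the two trivial solutions $k'=k$ and $k'=k_1$. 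For $m = \pm 1$ the half-sum angle is shifted by $\pm\pi/2$, so $\sin\frac{\pi(k'+k_3)}{2} = \mp\cos\frac{\pi(k+k_1)}{2}$ (according to $m=\pm1$), and the equation becomes a genuine mixture of $\sin$ and $\cos$ of the half-sum. Solving this for $k_1$, by isolating a tangent and inverting with $\arcsin$, is what produces the nontrivial branch, and I would check that it simplifies to the claimed closed form $k_1 = h(k,k')$.

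To guarantee that the list is exhaustive (no missed and no spurious solutions), I would reformulate the problem as counting the level sets of the single function $F(k_1) := \omega(k_1) - \omega(k+k_1-k')$ on $\TT$, with equation \eqref{eq:consenergy1} equivalent to $F(k_1) = \omega(k') - \omega(k)$. The function $F$ is piecewise smooth with corners only where $k_1 \in \ZZ$ or $k + k_1 - k' \in \ZZ$; on each smooth piece I would establish strict monotonicity (equivalently, that $F'$ does not vanish there), so that each level is attained at most once per piece. Counting the pieces then bounds the total number of solutions, and matching this bound against the three solutions already exhibited closes the argument; admissibility (that each candidate $k_1$ has its reduced $k_3$ consistent with the sign pattern and umklapp index used) must be verified as part of this step.

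The main obstacle is precisely this bookkeeping of absolute values and umklapp shifts: because $\omega$ has a corner at every integer, both $k_1 \mapsto \omega(k_1)$ and $k_1 \mapsto \omega(k+k_1-k')$ are only piecewise analytic, and the nontrivial branch $h$ lives in the regime $m = \pm 1$ where the naive (no-absolute-value) computation gives nothing. Keeping track of which sign region each candidate falls into, proving the requisite monotonicity on each piece, and confirming that the explicit $h(k,k')$ both solves the equation and lands in its admissible region is where essentially all the work lies; the trigonometric manipulations themselves are routine once the correct case is singled out.
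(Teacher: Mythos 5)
The paper offers no proof of this statement: it is imported verbatim from \cite{lukkarinen2008anomalous}, so there is no in-paper argument to compare against, and your route is essentially the trigonometric analysis of that reference. The plan is sound and does go through. In the umklapp sector your reduction collapses (for $k'>k$, say, with $k_1$ on the wrapped arc) to
$$
\sin\Bigl(\pi\bigl(k_1-\tfrac{k'-k}{2}\bigr)\Bigr)=\tan\Bigl(\tfrac{\pi(k'-k)}{2}\Bigr)\cos\Bigl(\tfrac{\pi(k+k')}{2}\Bigr),
$$
and since $\tfrac{\pi(k+k')}{2}\in[\tfrac{\pi(k'-k)}{2},\pi-\tfrac{\pi(k'-k)}{2}]$ the right-hand side has modulus at most $\sin\tfrac{\pi(k'-k)}{2}\le 1$, so the $\arcsin$ is legitimate and the resulting root automatically lands in the admissible arc $[0,k'-k]$. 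One caveat: the displayed $h$ is written in the normalization of \cite{lukkarinen2008anomalous} (where $\omega(k)=|\sin(k/2)|$); with this paper's $\omega(k)=|\sin(\pi k)|$ on $\RR/\ZZ$ the formula reads $k_1=\tfrac{k'-k}{2}+\tfrac1\pi\arcsin(\cdots)$, so your final ``check that it simplifies to the claimed closed form'' requires that rescaling or it will appear to fail.

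Two points in your exhaustiveness step need repair. First, the count: for fixed $k\ne k'$ the function $F(k_1)=\omega(k_1)-\omega(k+k_1-k')$ has exactly \emph{two} smooth arcs on $\TT$ (corners at $k_1=0$ and $k_1=k'-k$) and exactly \emph{two} roots per generic fibre, namely $k_1=k'$ and $k_1=h(k,k')$; the branch ``$k'=k$'' is a degenerate two-dimensional component of the solution variety, and $k_1=k$ is \emph{not} a root when $k\ne k'$. Matching ``three solutions already exhibited'' against two monotone arcs would make the argument appear contradictory. Second, the deferred monotonicity is true but for a slightly delicate reason: on the unwrapped arc $F'(k_1)=\pi[\cos(\pi k_1)-\cos(\pi(k_1+k-k'))]$ vanishes at $k_1=\tfrac{k'-k}{2}$, and on the wrapped arc the analogous critical point is $k_1=\tfrac{k'-k+1}{2}$; one must observe that each of these lies in the \emph{other} arc (which holds because $0\le\tfrac{k'-k}{2}<k'-k$ and $k'-k\le\tfrac{k'-k+1}{2}<1$), so each arc is strictly monotone and carries at most one root. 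With these two corrections your outline closes into a complete proof.
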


With this proposition in hand, one can now compute the kernel $K(k,k')$ and the multiplicative function $V(k)$.  
We recall here the main result of \cite{lukkarinen2008anomalous}. The first one 
states that the function $V(k)$ is degenerate for $k\to 0$ (note that $W$ in \cite{lukkarinen2008anomalous}  corresponds to our $V$):
\begin{proposition}[{\cite[Lemma~4.1]{lukkarinen2008anomalous}}]
\label{prop:degeneracy_multiplicative_operator}
The function $V:\RR\to\RR_+$ is symmetric ($V(1-k)=V(k)$), continuous and satisfies
\begin{equation}\label{eq:V} 
c_1|\sin(\pi k)|^{5/3} \leq V(k)\leq c_2|\sin(\pi k)|^{5/3}
\end{equation}
for all $k\in\RR$, for some $c_1, c_2>0$. Moreover,
$$\lim_{k\rightarrow 0} \lp \left| \sin\pi k \right|^{-5/3} V(k) \rp = v_0 >0.$$
\end{proposition}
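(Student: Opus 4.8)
The plan is to follow the strategy of Lukkarinen--Spohn: reduce the two-dimensional integral defining $V$ in \eqref{eq:coefV} to a one-dimensional one by resolving the energy $\delta$, and then perform a careful scaling analysis near $k=0$.

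First, the symmetry $V(1-k)=V(k)$ is elementary. Changing variables $k_1\mapsto 1-k_1$ and $k'\mapsto 1-k'$ in \eqref{eq:coefV} and using that $\om(1-\cdot)=\om(\cdot)$ together with the $1$-periodicity of $\om$ (so that $\om(1-(k+k_1-k'))=\om(k+k_1-k')$), the argument of the $\delta$ is mapped to the same expression with $k$ replaced by $1-k$, while the prefactor satisfies $\om(k)^2=\om(1-k)^2$; hence $V(1-k)=V(k)$.

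Second, I would resolve the inner $\delta$. Fix $k,k'$ and regard $g(k_1)=\om(k)+\om(k_1)-\om(k')-\om(k+k_1-k')$ as a function of $k_1$. By the Proposition on the solution set of \eqref{eq:consenergy1}, the zeros of $g$ are the trivial root $k_1=k'$ and the nontrivial root $k_1=h(k,k')$. Applying the coarea formula $\int_\TT\delta(g(k_1))\,dk_1=\sum_{g(k_1^*)=0}|g'(k_1^*)|^{-1}$, with $g'(k_1)=\om'(k_1)-\om'(k+k_1-k')$, turns \eqref{eq:coefV} into a single integral over $k'$ whose integrand is the reciprocal of a difference of group velocities, $|\om'(k_1^*)-\om'(k+k_1^*-k')|^{-1}$, evaluated at the roots. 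The trivial root corresponds to a non-genuine collision $\{k_2,k_3\}=\{k,k_1\}$ and must be treated with care; one checks, as in \cite{lukkarinen2008anomalous}, that it does not produce a spurious divergence of $V(k)$, the genuine contribution coming from $k_1=h(k,k')$. Continuity of $V$ for $k\neq0$ then follows from continuity of $h$ and of the integrand away from the points where the group-velocity difference vanishes.

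Third --- and this is \emph{the crux} --- I would extract the behaviour as $k\to0$. The factor $\om(k)^2\sim\pi^2k^2$ is regular, so the $|\sin\pi k|^{5/3}$ law forces the remaining $k'$-integral to blow up like $|k|^{-1/3}$. This divergence is produced by the neighbourhood of the wave numbers $k'$ at which the group velocities coincide, $\om'(h)=\om'(k+h-k')$, i.e. where the Jacobian $g'$ degenerates. The idea is to localize to that region, insert the expansions of $h(k,k')$ and of $\om'$ (using $\om(\xi)\sim\pi|\xi|$ and $\om'(\xi)\to\pm\pi$ near $0$) valid for small $k$, and rescale the integration variable by $|\sin\pi k|$. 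After rescaling, the integrand should converge pointwise to an explicit profile whose integral over $\RR$ is finite and strictly positive; identifying this limit integral yields $v_0$, while uniform upper and lower bounds on the integrand across and away from the degeneracy give the two-sided estimate \eqref{eq:V} with constants $c_1,c_2$. The main obstacle is exactly this last step: pinning down the order of vanishing of $\om'(h(k,k'))-\om'(k+h(k,k')-k')$ along the resonance set as $k\to0$, which controls the singular exponent. The fractional power $5/3$ (equivalently the $|k|^{-1/3}$ blow-up of the reduced integral) reflects a cubic tangency of the resonance manifold at zero wave number, and its rigorous justification requires sharp two-sided control of $h$ and its derivatives near $k=0$ together with a dominated-convergence argument uniform in the rescaling --- this is the delicate, computational heart of \cite[Lemma~4.1]{lukkarinen2008anomalous}.
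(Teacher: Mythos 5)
The paper does not actually prove this proposition: it is imported verbatim from Lukkarinen--Spohn \cite{lukkarinen2008anomalous} (their Lemma~4.1), and everything downstream (the compactness results, Proposition \ref{eq:W}, the value of $v_0$ entering $\kappa_1,\kappa_2,\kappa_3$) relies on that citation. Judged as a self-contained argument, your proposal has a genuine gap: the entire quantitative content of the statement --- the two-sided bound \eqref{eq:V} with the exponent $5/3$ and the existence and positivity of the limit $v_0$ --- is not established. You correctly locate the source of the exponent (degeneracy of the group-velocity difference $\om'(k_1)-\om'(k+k_1-k')$ along the nontrivial branch $k_1=h(k,k')$), but the expansion of $h$ near $k=0$, the identification of the resonance set, the uniform domination of the rescaled integrand, and the evaluation of the limiting profile are all explicitly deferred to \cite{lukkarinen2008anomalous}; those steps \emph{are} the proof. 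There is also an internal inconsistency: you correctly attribute the $|k|^{-1/3}$ blow-up of the reduced integral to a cubic tangency, which means the degenerate set of $k'$ has width of order $|k|^{1/3}$ (from $(\cos\pi k+\cos\pi k')^2\sim 4\sin\pi k\,\sin\pi k'$, i.e.\ $|1-k'|^3\sim|k|$), so the natural rescaling is by $|\sin\pi k|^{1/3}$, not by $|\sin\pi k|$ as you write; the latter would produce the wrong exponent.

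A second, more structural problem is your treatment of the trivial root. If one literally resolves the $\delta$ in \eqref{eq:coefV} over $k_1$ and retains the root $k_1=k'$, its contribution to the reduced integral is $\int dk'\,|\om'(k')-\om'(k)|^{-1}$, and since $\om'(k')-\om'(k)\approx-\pi^2\sin(\pi k)\,(k'-k)$ near $k'=k$, this diverges logarithmically: the trivial branch produces a \emph{real} divergence of $V(k)$, not a spurious one that ``one checks'' to be harmless. The correct resolution (implicit in \cite{lukkarinen2008anomalous} and in the decomposition $K=2\om K_2\om-\om K_1\om$ used in this paper) is that the trivial branches $k_2=k$ and $k_2=k_1$ are discarded from the outset; this is legitimate only because the collision integrand $\om_3f_3+\om_2f_2-\om_1f_1-\om f$ in \eqref{eq:L} vanishes identically on them, so they never contribute to $L$, and the finite $V$ and $K$ of the proposition are by definition built from the nontrivial branch alone. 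As written, your reduced formula for $V$ would be $+\infty$, so this point needs to be made precise before any asymptotic analysis can start.
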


Because of the degeneracy of $V$ for $k=0$,  we do not expect the operator $L$ to have a spectral gap in $L^2$.
We thus introduce the operator  
$$L_0(f): = V^{-1/2}L(V^{-1/2}f)$$
We note that this operator has the form
$$L_0(f)=K_0(f)-f$$
with
$$K_0(f)=V^{-1/2}K (V^{-1/2}f).$$

To prove that $L_0$ has good properties in $L^2(\TT)$, we need to study the operator $K_0$.
Again, it is proved in  \cite{lukkarinen2008anomalous} that $K_0:L^2(\TT^1)\to L^2(\TT^1)$ is a 
compact, self-adjoint operator, which implies that $K:L^2(\TT^1,V\, dk)\to L^2(\TT^1,V^{-1}dk)$  is a compact, self-adjoint operator.

To be more precise, in \cite{lukkarinen2008anomalous}, the kernel $K$ is first written as
$$K(k,k')=2\omega(k) K_2(k,k')\omega(k')-\omega(k) K_1(k,k')\omega(k')$$
where 
\begin{equation} \label{eq:k1k2}
K_1(k,k'):=4 \frac{\mathbb{1}\lp F_-(k,k')>0\rp}{\sqrt{F_-(k,k')}}\quad \mbox{ and} \quad K_2(k,k'):= \frac{2}{\sqrt{F_+(k,k')}}
\end{equation}
for $k,k' \in [0,1]$ and
$$F_{\mbox{\textpm}}(k,k') = \lp \cos( \pi k) +\cos(\pi k' )\rp^2 \mbox{\textpm}4\sin(\pi k) \sin( \pi k').$$
and the main result of \cite{lukkarinen2008anomalous} is the following:
\begin{proposition}[{\cite[Propositions 4.3 and 4.4.]{lukkarinen2008anomalous}}]
\label{prop:properties_integral_kernels} 
Let $\psi:[0,1] \rightarrow \mathbb{R}$ be given, and assume that there are $C,p>0$ such that
$$|\psi(k)|\leq C \lp \sin \pi k \rp^p$$
for all $k\in[0,1]$. Then the kernels
$$  \psi(k)^* K_2(k,k') \psi(k') \qquad \mbox{ and } \quad \psi(k)^* K_1(k,k') \psi(k')$$
define  compact, self-adjoint integral operators in $L^2(\TT)$.
\end{proposition}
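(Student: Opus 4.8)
The plan is to treat the two kernels separately after disposing of self-adjointness, which is immediate: the function $F_\pm(k,k')=(\cos\pi k+\cos\pi k')^2\pm 4\sin\pi k\sin\pi k'$ is manifestly symmetric under $k\leftrightarrow k'$, so $K_1,K_2$ are symmetric, and since $\psi$ is real-valued the weighted kernels $\psi(k)K_i(k,k')\psi(k')$ are real and symmetric. Thus each operator is self-adjoint as soon as it is shown to be bounded, which will follow from the compactness argument. The whole difficulty is therefore to control the two types of singularities, $K_2\sim F_+^{-1/2}$ and $K_1\sim F_-^{-1/2}\mathbb{1}(F_->0)$. The first step is to locate the zero sets of $F_\pm$ on $[0,1]^2$. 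Since $\sin\pi k,\sin\pi k'\ge0$ there, one has $F_+\ge0$ with equality only at the two corners $(0,1),(1,0)$, whereas $F_-$ changes sign and vanishes along a one-dimensional resonance curve $\Gamma$ through the interior (e.g.\ $F_-(k,k)=4\cos2\pi k$ vanishes at $k=1/4,3/4$). This dichotomy---isolated (corner) singularities for $K_2$ versus a curve of singularities for $K_1$---dictates two different treatments.

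For $\psi K_2\psi$ I would prove compactness by showing it is Hilbert--Schmidt, i.e.\ that $\int_0^1\int_0^1 \psi(k)^2K_2(k,k')^2\psi(k')^2\,dk\,dk'<\infty$. Away from the corners $F_+$ is bounded below and the integrand is bounded, so only the corners matter. Near $(0,1)$ set $k=u$, $k'=1-v$ with $u,v\to0^+$; a Taylor expansion gives $\cos\pi k+\cos\pi k'=O(u^2+v^2)$ and $4\sin\pi k\sin\pi k'\sim 4\pi^2uv$, hence $F_+\sim 4\pi^2uv$ and $K_2\sim (uv)^{-1/2}$. Using the hypothesis $|\psi|\le C(\sin\pi k)^p\lesssim u^p$ (resp.\ $v^p$), the weighted integrand is $\lesssim (uv)^{2p-1}$, whose double integral over a corner neighborhood converges precisely because $p>0$ (the condition $2p-1>-1$); the corner $(1,0)$ is symmetric. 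Thus $\psi K_2\psi$ is Hilbert--Schmidt, hence compact, and the role of the weight $\psi$ is exactly to render the corner singularity square-integrable.

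For $\psi K_1\psi$ the kernel is only weakly singular (along $\Gamma$ one has $K_1\sim \mathrm{dist}^{-1/2}$, so $K_1^2\sim\mathrm{dist}^{-1}$ is not integrable across $\Gamma$) and Hilbert--Schmidt is hopeless. Instead I would approximate: set $K_1^\delta:=K_1\,\mathbb{1}(F_->\delta)$, which is a bounded kernel on a bounded square and hence defines a Hilbert--Schmidt, compact operator $\psi K_1^\delta\psi$. It then suffices to show $\|\psi(K_1-K_1^\delta)\psi\|_{\mathrm{op}}\to0$ as $\delta\to0$, for then $\psi K_1\psi$ is an operator-norm limit of compact operators. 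Since $\psi$ is bounded, Schur's test bounds this norm by the geometric mean of $\sup_k\int_{0<F_-\le\delta}4F_-^{-1/2}\,dk'$ and the symmetric quantity obtained by integrating in $k$. Away from the critical points of $\Gamma$, where $F_-$ vanishes linearly transversally, this integral is $O(\sqrt\delta)$ and the argument closes.

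The main obstacle is precisely the uniformity of this last bound near the points of $\Gamma$ where the resonance curve becomes tangent to a coordinate direction, i.e.\ where $F_-=0$ and $\partial_{k'}F_-=0$ simultaneously (the van Hove--type degeneracies). There the transverse width of the strip $\{0<F_-\le\delta\}$ in the $k'$ variable degenerates and $\sup_k\int\cdots\,dk'$ need not be $O(\sqrt\delta)$---indeed $\int F_-^{-1/2}\,dk'$ can fail to be finite at such $k$. Resolving this requires the precise second-order local structure of $F_-$ on $\Gamma$ (equivalently, of the explicit resonance branch $k_1=h(k,k')$): one must determine the exact order of vanishing, exploit the one-sided indicator $\mathbb{1}(F_->0)$ (which kills the kernel whenever $F_-$ attains a degenerate maximum equal to $0$), and, where a coordinate direction degenerates, integrate in the transverse variable instead, using the $k\leftrightarrow k'$ symmetry. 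This local analysis at the finitely many critical points of the resonance manifold is the technical heart of the statement and is where the detailed computations of \cite{lukkarinen2008anomalous} enter.
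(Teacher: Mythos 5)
First, a point of comparison: the paper does not prove this proposition at all --- it is imported verbatim from Lukkarinen--Spohn (Propositions 4.3 and 4.4 of \cite{lukkarinen2008anomalous}) and used as a black box, so there is no in-paper argument to measure your proposal against. Judged on its own terms, your treatment of the easy parts is correct. Self-adjointness is indeed immediate from the symmetry of $F_\pm$. For $\psi K_2\psi$, your corner analysis gives the right conclusion; note that it can be streamlined, since $F_+(k,k')\ge 4\sin(\pi k)\sin(\pi k')$ holds globally on $[0,1]^2$, whence $|\psi(k)K_2(k,k')\psi(k')|\le C^2(\sin\pi k)^{p-1/2}(\sin\pi k')^{p-1/2}$ and the Hilbert--Schmidt norm is finite for every $p>0$ without any localization. (Your asymptotic claim $F_+\sim 4\pi^2 uv$ is not uniform near the corner --- along the edge $u=0$ one has $F_+\sim \pi^4v^4/4$ --- but only the lower bound on $F_+$ is used, so this does not affect the argument.)

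The genuine gap is in the $K_1$ half, and you have named it yourself. Your truncation-plus-Schur scheme reduces compactness to the uniform estimate $\sup_k\int_{\{0<F_-(k,k')\le\delta\}}F_-(k,k')^{-1/2}\,dk'\to 0$ as $\delta\to 0$, and you then concede that this estimate is in doubt precisely at the points where the resonance curve $\{F_-=0\}$ is tangent to a coordinate direction, deferring the resolution to ``the detailed computations of \cite{lukkarinen2008anomalous}.'' But that local analysis of the degenerate points \emph{is} the content of Proposition 4.4 of \cite{lukkarinen2008anomalous}; everything else in your write-up is routine. A proof that defers exactly the step where the difficulty is concentrated to the reference being proved is not an independent proof: one must actually verify, using the explicit form of $F_-$ (equivalently, of the resonance branch $k_1=h(k,k')$), the order of vanishing of $F_-$ at those finitely many points, check that the one-sided restriction $\mathbb{1}(F_->0)$ together with the $k\leftrightarrow k'$ symmetry salvages the Schur bound there, and confirm that $\sup_k\int_0^1 K_1(k,k')\,dk'<\infty$ in the first place. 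Until that is carried out, the compactness of $\psi K_1\psi$ remains asserted rather than proved.
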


We immediately conclude:
\begin{corollary}
The  kernel
\be \label{eq:K0}
K_0(k,k') = V^{-1/2}(k) \omega(k) \lp 2 K_2(k, k') -K_1(k,k')\rp \omega(k') V^{-1/2}(k')
\ee
defines a compact self-adjoint operator in $L^2((0,1))$.
As a consequence, the kernel 
$$ K(k,k') = V^{1/2}(k) K_0(k,k') V^{1/2}(k')$$
defines a compact self-adjoint operator  from $L^2(\TT^1,V(k)\,dk)$ onto  $L^2(\TT^1,V(k)^{-1}\,dk)$.
In particular,
\begin{equation}\label{eq:bdK1}
\int_\TT |K(f)(k)|^2 V(k)^{-1}\, dk \leq C \int_\TT |f(k)|^2  V(k)\, dk .
\end{equation}
\end{corollary}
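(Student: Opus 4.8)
The plan is to reduce everything to the single input Proposition~\ref{prop:properties_integral_kernels}, whose hypothesis asks for a weight $\psi$ satisfying $|\psi(k)|\leq C(\sin\pi k)^p$ for some $p>0$. The natural choice here is
$$\psi(k):=V^{-1/2}(k)\,\omega(k),$$
which is exactly the factor sitting between $K_j$ and the outer $V^{-1/2}$ in the definition \eqref{eq:K0} of $K_0$. First I would verify that $\psi$ meets the required bound. Since $\omega(k)=|\sin\pi k|$, and since Proposition~\ref{prop:degeneracy_multiplicative_operator} gives $V(k)\geq c_1|\sin\pi k|^{5/3}$, we get
$$|\psi(k)|=V^{-1/2}(k)\,\omega(k)\leq c_1^{-1/2}|\sin\pi k|^{-5/6}\,|\sin\pi k|=c_1^{-1/2}|\sin\pi k|^{1/6},$$
so the hypothesis holds with $p=\tfrac16>0$. (The two-sided bound on $V$ in fact shows $\psi$ is comparable to $|\sin\pi k|^{1/6}$, so no decay is wasted.)

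With this choice, Proposition~\ref{prop:properties_integral_kernels} applies directly, noting that $\psi$ is real so $\psi^*=\psi$, and tells us that the two kernels $\psi(k)K_2(k,k')\psi(k')$ and $\psi(k)K_1(k,k')\psi(k')$ each define a compact self-adjoint operator on $L^2(\TT)$. Since by \eqref{eq:K0}
$$K_0(k,k')=2\,\psi(k)K_2(k,k')\psi(k')-\psi(k)K_1(k,k')\psi(k'),$$
the operator $K_0$ is a linear combination of two compact self-adjoint operators, hence itself compact and self-adjoint on $L^2((0,1))=L^2(\TT)$. This settles the first assertion.

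To pass from $K_0$ to $K$, I would use the multiplication operator $Uf:=V^{1/2}f$. Since $V>0$ a.e., $U$ is an isometric isomorphism $L^2(\TT,V\,dk)\to L^2(\TT,dk)$, and likewise $g\mapsto V^{-1/2}g$ is an isometric isomorphism $L^2(\TT,V^{-1}\,dk)\to L^2(\TT,dk)$. A direct substitution using $K(k,k')=V^{1/2}(k)K_0(k,k')V^{1/2}(k')$ shows that for $h\in L^2(\TT,dk)$ one has $(K(V^{-1/2}h))(k)=V^{1/2}(k)(K_0 h)(k)$; that is, conjugating $K$ by these two isometries recovers exactly $K_0$. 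Consequently $K$ is unitarily equivalent to the compact operator $K_0$ and is therefore compact as a map $L^2(\TT,V\,dk)\to L^2(\TT,V^{-1}\,dk)$; its self-adjointness, understood with respect to the duality pairing $\langle f,g\rangle=\int_\TT fg\,dk$ between these two spaces, is inherited from the symmetry $K(k,k')=K(k',k)$ already recorded above, equivalently from that of $K_0$. Finally, \eqref{eq:bdK1} is merely the statement that $K$ is bounded: reading the left-hand side as $\|Kf\|_{L^2(V^{-1}dk)}^2$ and the right-hand side as $\|f\|_{L^2(V\,dk)}^2$, the inequality holds with $C=\|K\|^2=\|K_0\|^2<\infty$.

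The point worth flagging is that there is essentially no new analytic content here: all the delicate estimates on the singular kernels $K_1,K_2$ have been carried out in \cite{lukkarinen2008anomalous} and packaged into Proposition~\ref{prop:properties_integral_kernels}. The only things one must get right are, first, the bookkeeping of exponents that certifies $p=\tfrac16>0$ — this is precisely what upgrades the weighted kernels from bounded to compact — and second, the correct interpretation of ``self-adjoint from $L^2(V\,dk)$ onto $L^2(V^{-1}\,dk)$'' as symmetry of the kernel under the natural pairing, so that the isometric-conjugation argument transfers compactness, self-adjointness, and the operator-norm bound all at once.
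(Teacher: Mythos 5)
Your argument is correct and coincides with the paper's own proof: the authors likewise take $\psi=V^{-1/2}\omega$, note the bound $V^{-1/2}(k)\omega(k)\leq C(\sin\pi k)^{1/6}$ from Proposition \ref{prop:degeneracy_multiplicative_operator}, and invoke Proposition \ref{prop:properties_integral_kernels}. Your extra detail on transferring compactness from $K_0$ to $K$ via the weighted isometries is exactly the (implicit) content of the paper's ``as a consequence'' step.
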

\begin{proof}
Indeed, by Proposition \ref{prop:degeneracy_multiplicative_operator} we have that
$$
V^{-1/2}(k) \omega(k)  \leq  c_2 \lp \sin \pi k\rp^{1/6} 
$$
and 
the claim follows from Proposition \ref{prop:properties_integral_kernels}.
\end{proof}

Furthermore, we note that we have not used the full potential of Proposition \eqref{prop:properties_integral_kernels}. 
We can thus improve \eqref{eq:bdK1} as follows: 
\begin{corollary}
The  kernel
$$
\widetilde{K_0}(k,k'):= \lp\sin( \pi k) \rp^{-1/6 +\eta} K_0(x,k') \lp \sin (\pi k') \rp^{-1/6+\eta}\quad \eta>0
$$
defines a compact self-adjoint operator in $L^2((0,1))$.
In particular, for all $\eta>0$, there exists $C(\eta)$ such that
\begin{equation}\label{eq:bdK}
\int_\TT |K(f)(k)|^2 (\sin(\pi k))^{-\frac 1 3+\eta} V(k)^{-1}\, dk \leq C \int_\TT |f(k)|^2 (\sin(\pi k))^{\frac 1 3-\eta} V(k)\, dk 
\end{equation}
\end{corollary}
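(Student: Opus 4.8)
The plan is to exploit the fact that Proposition~\ref{prop:properties_integral_kernels} only requires a weight $\psi$ with an \emph{arbitrarily small} positive exponent $p>0$, whereas the previous corollary used the specific value $p=1/6$ coming from $V^{-1/2}\om$; the surplus decay $\lp\sin\pi k\rp^{1/6}$ is precisely what lets us absorb the extra singular weights $\lp\sin\pi k\rp^{-1/6+\eta}$. Concretely, I would start from the factorization \eqref{eq:K0} and write
$$\widetilde{K_0}(k,k') = \psi(k)\,\lp 2K_2(k,k')-K_1(k,k')\rp\,\psi(k'), \qquad \psi(k):=\lp\sin\pi k\rp^{-1/6+\eta}\, V^{-1/2}(k)\,\om(k),$$
so that $\widetilde{K_0}$ is built from $K_1$ and $K_2$ conjugated by the single scalar weight $\psi$.

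The key point is that $\psi$ still satisfies the hypothesis of Proposition~\ref{prop:properties_integral_kernels}: by Proposition~\ref{prop:degeneracy_multiplicative_operator} we have $V^{-1/2}(k)\om(k)\leq c_2\lp\sin\pi k\rp^{1/6}$, hence
$$|\psi(k)| \leq c_2\lp\sin\pi k\rp^{-1/6+\eta}\lp\sin\pi k\rp^{1/6} = c_2\lp\sin\pi k\rp^{\eta},$$
which is a bound of the required form with $p=\eta>0$. Applying Proposition~\ref{prop:properties_integral_kernels} to this $\psi$ shows that $\psi(k)^*K_2(k,k')\psi(k')$ and $\psi(k)^*K_1(k,k')\psi(k')$ each define compact self-adjoint operators on $L^2((0,1))$; taking the linear combination $2(\psi^*K_2\psi)-(\psi^*K_1\psi)=\widetilde{K_0}$ yields the first claim.

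For the quantitative bound \eqref{eq:bdK}, I would introduce the multiplication operators $A$ and $B$ given by $A(k)=\lp\sin\pi k\rp^{-1/6+\eta/2}V^{-1/2}(k)$ and $B(k)=\lp\sin\pi k\rp^{1/6-\eta/2}V^{1/2}(k)$, so that the two sides of \eqref{eq:bdK} are exactly $\|A\,K(f)\|_{L^2((0,1))}^2$ and $\|Bf\|_{L^2((0,1))}^2$. Setting $g=Bf$, the inequality is equivalent to the $L^2$-boundedness of $AKB^{-1}$, whose kernel, using $K=V^{1/2}K_0V^{1/2}$, collapses to $\lp\sin\pi k\rp^{-1/6+\eta/2}K_0(k,k')\lp\sin\pi k'\rp^{-1/6+\eta/2}$ --- that is, $\widetilde{K_0}$ with $\eta$ replaced by $\eta/2$. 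Since $\eta/2>0$, the first part guarantees this operator is compact, hence bounded, and \eqref{eq:bdK} follows with $C=C(\eta)=\|AKB^{-1}\|_{\mathrm{op}}^2$. The only step requiring care --- and the reason the earlier corollary did not already deliver this --- is verifying that the inserted exponent leaves $\psi$ with a strictly positive power of $\sin\pi k$; there is no genuine analytic obstacle, since Propositions~\ref{prop:degeneracy_multiplicative_operator} and~\ref{prop:properties_integral_kernels} do all the heavy lifting.
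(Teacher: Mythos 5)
Your proposal is correct and follows essentially the same route as the paper: both apply Proposition \ref{prop:properties_integral_kernels} to the weight $\psi(k)=\lp\sin\pi k\rp^{-1/6+\eta}V^{-1/2}(k)\om(k)$, which Proposition \ref{prop:degeneracy_multiplicative_operator} bounds by $c_2\lp\sin\pi k\rp^{\eta}$, and read off compactness of $\widetilde{K_0}$ and hence \eqref{eq:bdK}. Your explicit conjugation by the multiplication operators $A$ and $B$ (with the $\eta/2$ relabeling) just spells out the deduction of the weighted inequality that the paper leaves implicit.
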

\begin{proof}
Using Proposition \ref{prop:degeneracy_multiplicative_operator} we have that
\beqar
V^{-1/2}(k) \omega(k) \lp \sin \pi k \rp^{-1/6+\eta} &\leq&  c_2 \lp \sin \pi k\rp^{1/6} \lp \sin \pi k \rp^{-1/6 +\eta}\\
&=& c_2 \lp \sin \pi k \rp^\eta
\eeqar
the claim follows from Proposition \ref{prop:properties_integral_kernels}.
\end{proof}

\medskip

We have thus showed that $L:L^2(\TT^1,V(k)\,dk)\tto L^2(\TT^1,V(k)^{-1}\,dk)$ was a bounded operator.
Next, we characterize the kernel of $L$:
First, we note that given $f\in L^2(\TT^1,V(k)\,dk)$, 
inequality \eqref{eq:Lp} implies that if $L(f)=0$ then
\begin{align*}
& \int\!\! \int\!\! \int \!\! \int
\delta(k+k_1-k_2-k_3) 
\delta(\om+\om_1-\om_2-\om_3)  \\
& \qquad\qquad\qquad\qquad
\times [\om_3 f_3 +\om_2 f_2 -\om_1 f_1 -\om f]^2 \,dk\,  dk_1\, dk_2\, dk_3 =0.
\end{align*}
So $f$ must satisfy
$$ \om(k) f(k) +  \om(k_1) f(k_1)  =  \om(k_2) f(k_2)  + \om(k+k_1-k_2) f(k+k_1-k_2) $$
whenever
$$ \om(k)  +  \om(k_1)  =  \om(k_2)   + \om(k+k_1-k_2). $$
We also say that $\omega(k)f(k)$ must be a collision invariant.
Such invariants have been characterized in \cite{lukkarinen2008anomalous}:
\begin{theorem}[\cite{lukkarinen2008anomalous}]\label{thm:LS}
A function $\psi\in L^1(\TT)$ is a collisional invariant if and only if there exists $c_1$ and $c_2$ such that
$$ \psi(k)=c_1+c_2\omega(k).$$
\end{theorem}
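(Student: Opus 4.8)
The implication ``$\Leftarrow$'' is immediate: if $\psi=c_1+c_2\om$, then for any quadruple with $k+k_1=k_2+k_3 \pmod 1$ and $\om(k)+\om(k_1)=\om(k_2)+\om(k_3)$ one has $\psi(k)+\psi(k_1)-\psi(k_2)-\psi(k_3)=c_2[\om(k)+\om(k_1)-\om(k_2)-\om(k_3)]=0$, the constant parts cancelling automatically. The whole content of the theorem is therefore the converse: any collisional invariant $\psi\in L^1(\TT)$ must be affine in $\om$. My plan is first to upgrade the regularity of $\psi$, then to exploit the explicit parametrisation of the energy shell to reduce the invariance relation to a differential identity, and finally to use the umklapp structure to discard a spurious momentum-type mode.

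Using the non-trivial branch $k_1=h(k,k')$ of the energy-conservation equation supplied by the preceding proposition, the invariance relation reads $\psi(k)+\psi(h(k,k'))=\psi(k')+\psi(k+h(k,k')-k')$ for a.e.\ $(k,k')$. To gain regularity I would integrate this identity against a smooth, compactly supported weight in $k'$ on a region where $(k,k')\mapsto h(k,k')$ is a smooth submersion (away from the corner $k=0$ of $\om$): after the changes of variables $k'\mapsto h(k,k')$ and $k'\mapsto k+h(k,k')-k'$, the three transported terms become convolution-type integrals of $\psi$, which depend continuously on $k$. This Steklov-type averaging expresses $\psi$, modulo smooth functions, as a smooth function of $k$, and a standard bootstrap then gives $\psi\in C^\infty$ on $(0,\tfrac12)\cup(\tfrac12,1)$.

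On $(0,\tfrac12)$ the map $\om$ is a smooth strictly increasing diffeomorphism onto $(0,1)$, so I may use $e=\om(k)$ as coordinate and write $\psi=\Psi\circ\om$. The key structural observation is that, because $\om$ is strictly concave on each branch, \emph{no} non-trivial collision keeps all four momenta on a single monotonicity branch: fixing $k+k_1$ and $\om(k)+\om(k_1)$ then determines the unordered pair $\{k,k_1\}$. Hence the genuine collisions generated by $h$ necessarily straddle the singular point $k=0$ of $\om$, i.e.\ they are umklapp processes, where $k+k_1=k_2+k_3+n$ with $n\neq0$. Differentiating the invariance relation along the two-parameter collision manifold and using energy conservation $e+e_1=e_2+e_3$, I expect to obtain that $\Psi''$ is forced to be constant, so that the general local solution of the differential constraints is $\psi=a+b\om+c\,\theta$, with $\theta(k)=k$ the momentum coordinate (which, on $(0,\tfrac12)$, equals $\tfrac1\pi\arcsin e$).

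It remains to show $c=0$ and to match the branch-wise constants. This is precisely where the umklapp collisions are indispensable: $\theta(k)=k$ is not single-valued on $\TT=\RR/\ZZ$, and for any collision with $k+k_1=k_2+k_3+n$, $n\neq0$, the putative invariant $c\,k$ contributes $-cn\neq0$ to $\psi(k)+\psi(k_1)-\psi(k_2)-\psi(k_3)$; since such collisions occur on a set of positive measure, continuity of $\psi$ forces $c=0$. Continuity at $k=0$ together with the symmetry $\om(1-k)=\om(k)$ then glues the branch constants into global constants, yielding $\psi=c_1+c_2\om$ on all of $\TT$. I expect the genuinely hard part to be the regularity step combined with the degeneracy analysis near $k=0$: the averaging that produces smoothness degenerates exactly at the corner of $\om$, which is also where $V(k)\sim|\sin\pi k|^{5/3}$ vanishes (Proposition \ref{prop:degeneracy_multiplicative_operator}) and where the Jacobian of $h$ becomes singular, so both the regularisation and the differentiation of the invariance relation must be carried out on shrinking neighbourhoods of $0$ and $\tfrac12$ and extended afterwards by continuity.
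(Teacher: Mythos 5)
The paper does not prove this statement: Theorem \ref{thm:LS} is imported verbatim from \cite{lukkarinen2008anomalous}, where its proof is a substantial piece of analysis in its own right. There is therefore no internal argument to compare against, and your proposal has to stand on its own; as written it has a genuine gap at its center.

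Your ``$\Leftarrow$'' direction is fine, and two of your structural observations are correct and relevant: (i) if $k,k_1,k_2,k_3\in(0,1)$ satisfy $k+k_1=k_2+k_3$ exactly in $\RR$ together with energy conservation, then writing $\sin(\pi k)+\sin(\pi k_1)=2\sin(\pi s/2)\cos(\pi(k-s/2))$ with $s=k+k_1$ forces $\{k,k_1\}=\{k_2,k_3\}$, so every non-trivial collision is indeed umklapp; and (ii) granted a local decomposition $\psi=a+b\omega+c\,k$, the umklapp shift contributes $cn\neq0$ and kills the momentum mode. The gap is the step in between: you never derive that a (regular) collision invariant is locally an affine combination of $1$, $k$ and $\omega(k)$, and the one concrete mechanism you offer --- that differentiating along the collision manifold forces $\Psi''$ to be constant --- contradicts your own conclusion, since $\Psi''\equiv\mathrm{const}$ would yield $\psi=a+b\omega+c\,\omega^2$, whereas $k=\tfrac1\pi\arcsin e$ has non-constant second derivative in $e=\omega(k)$. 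Moreover, unlike the classical Boltzmann setting, the set of pairs $(k_2,k_3)$ reachable from a fixed $(k,k_1)$ in a single collision is here the discrete level set $\{k'\,:\,h(k,k')=k_1\}$ together with the trivial solutions, not a full-dimensional collision sphere, so the classical argument characterizing Boltzmann collision invariants does not apply verbatim; one must exploit the explicit form of $h$, which is exactly what Lukkarinen and Spohn do. Finally, the upgrade from $\psi\in L^1$ to smoothness is asserted via averaging but not established: the Jacobians of the substitutions $k'\mapsto h(k,k')$ degenerate where the nontrivial branch meets the trivial ones and near $k=0,\tfrac12$, and controlling this degeneracy is itself a nontrivial part of the cited proof rather than a ``standard bootstrap.''
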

As a consequence, we deduce:
\begin{corollary}
The kernel of $L$ is the two dimensional subspace of $L^2(\TT^1,V(k)\,dk)$ spanned by the functions $  1$ and $\omega(k)^{-1}$
 (note that both of those functions belongs to $L^2(\TT^1,V(k)\,dk)$ thanks to (\ref{eq:V}))
\end{corollary}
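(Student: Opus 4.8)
The plan is to prove the two inclusions separately, the forward one by a direct computation and the reverse one by combining the quadratic-form identity \eqref{eq:Lp} with the characterization of collision invariants in Theorem \ref{thm:LS}.

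First I would record that both candidate functions actually belong to $L^2(\TT^1,V(k)\,dk)$ and are annihilated by $L$. Membership is clear for $1$ since $V$ is bounded (Proposition \ref{prop:degeneracy_multiplicative_operator}); for $\omega^{-1}$ one uses $\omega(k)=|\sin\pi k|\sim\pi|k|$ together with the bound $V(k)\simeq|\sin\pi k|^{5/3}$ of \eqref{eq:V}, so that $|\omega^{-1}|^2V\simeq|k|^{-1/3}$ is integrable near the origin (the only zero of $V$ on $\TT$, by symmetry and \eqref{eq:V}). That $L(1)=L(\omega^{-1})=0$ may be quoted from the relations $L(1)=0$, $L(\omega^{-1})=0$ obtained earlier by differentiating $C(W_{a,b})=0$, or read off directly from \eqref{eq:L}: on the energy shell $\omega+\omega_1=\omega_2+\omega_3$ the bracket $[\omega_3 f_3+\omega_2 f_2-\omega_1 f_1-\omega f]$ vanishes identically when $f\equiv 1$, while for $f=\omega^{-1}$ each $\omega_i f_i$ equals $1$ and the bracket again collapses to $1+1-1-1=0$. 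This gives $\mbox{Span}\,\{1,\omega^{-1}\}\subseteq\ker L$.

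For the reverse inclusion, take $f\in L^2(\TT^1,V(k)\,dk)$ with $L(f)=0$. Since $L$ maps $L^2(V\,dk)$ boundedly into $L^2(V^{-1}\,dk)$, the pairing $\int_\TT L(f)f\,dk$ is well defined and equals $0$. Feeding this into \eqref{eq:Lp} forces the nonnegative fourth-order integral on its right-hand side to vanish; as its integrand is a perfect square against the energy- and momentum-conservation measures, $\omega(k)f(k)$ must be a collision invariant in the sense displayed just before Theorem \ref{thm:LS}. To apply that theorem I would first check $\omega f\in L^1(\TT)$, which follows from Cauchy--Schwarz, $\int_\TT|\omega f|\,dk\leq\|f\|_{L^2(V\,dk)}\,(\int_\TT\omega^2 V^{-1}\,dk)^{1/2}$, the last integral converging because $\omega^2 V^{-1}\simeq|k|^{1/3}$ near $k=0$. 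Theorem \ref{thm:LS} then gives $\omega(k)f(k)=c_1+c_2\omega(k)$ for some constants, i.e. $f=c_1\omega^{-1}+c_2$, proving $\ker L\subseteq\mbox{Span}\,\{1,\omega^{-1}\}$.

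The one delicate point, which I expect to be the main obstacle, is the rigorous use of \eqref{eq:Lp} for a general $f\in L^2(\TT^1,V(k)\,dk)$ rather than for a smooth test function: the identity involves singular conservation measures, so one must make precise both the fourth-order integral and the implication ``vanishing integrand $\Rightarrow\omega f$ is a collision invariant.'' I would handle this either by an approximation argument, approximating $f$ in the $V$-weighted norm by smooth functions and passing to the limit via the boundedness of $L$, or, more cleanly, by disintegrating the two $\delta$-measures along the parametrization $k_1=h(k,k')$ of \eqref{eq:consenergy1}, which turns the conclusion into a genuine pointwise identity almost everywhere on the collision manifold. The remaining verifications are routine bookkeeping already prepared by the preceding propositions.
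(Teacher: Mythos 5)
Your argument is correct and follows essentially the same route as the paper: the reverse inclusion via the vanishing of the quadratic form \eqref{eq:Lp}, the resulting collision-invariance of $\omega f$, and the characterization of collision invariants in Theorem \ref{thm:LS}. The additional checks you supply (membership of $1$ and $\omega^{-1}$ in $L^2(\TT^1,V\,dk)$, the $L^1$ integrability of $\omega f$ needed to invoke Theorem \ref{thm:LS}) are details the paper leaves implicit, and your remarks on making the use of the singular conservation measures rigorous are well placed.
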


\medskip

Finally, the compactness of $K$ and inequality \eqref{eq:Lp} implies
\begin{lemma}\label{lem:coer}
There exists $c_0>0$ such that
$$ - \int_{\TT^1} L(f)f\, dk \geq c_0\int V(k) |f-  \Pi( f)|^2\, dk$$
for all $f\in L^2(\TT^1,V(k)\,dk)$, where $\Pi(f)$ denotes the orthogonal projection of $f$ onto $\ker(L)$.
\end{lemma}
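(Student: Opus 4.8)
The plan is to derive the weighted coercivity estimate from the positivity identity \eqref{eq:Lp} together with the compactness of the kernel operator $K$, via a standard Weyl-type decomposition of $-L$ into ``identity minus compact.'' First I would work in the Hilbert space $H:=L^2(\TT^1,V(k)\,dk)$, where by the Corollaries above the operator $-L$ is self-adjoint and nonnegative, and can be written as $-L=V\,\mathrm{Id}-K$ in the sense that
$$ -\int_\TT L(f)\,f\,dk = \int_\TT V(k)|f(k)|^2\,dk - \int_\TT K(f)(k)\,\overline{f(k)}\,dk. $$
The first term is the ``coercive'' part; the second involves the compact operator $K$ established above. Equivalently, after the symmetrizing change of variables $g:=V^{1/2}f$, the operator becomes $L_0=K_0-\mathrm{Id}$ on $L^2(\TT)$ with $K_0$ compact self-adjoint, so $-L_0=\mathrm{Id}-K_0$.

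The key steps, in order, are as follows. First I would record that \eqref{eq:Lp} gives $-\int L(f)f\,dk\ge 0$ and that the kernel of $L$ is exactly $\mathrm{Span}\{1,\omega^{-1}\}$ (the preceding Corollary). Second, since $K_0$ is compact and self-adjoint on $L^2(\TT)$, its spectrum consists of eigenvalues accumulating only at $0$; the relation $-L_0=\mathrm{Id}-K_0\ge 0$ forces all eigenvalues of $K_0$ to lie in $(-\infty,1]$, and the eigenvalue $1$ corresponds precisely to $\ker(L_0)=V^{1/2}\ker(L)$, which is two-dimensional. Third, because the eigenvalues of $K_0$ can accumulate only at $0$, there is a spectral gap: the largest eigenvalue strictly below $1$ is some $1-c_0$ with $c_0>0$. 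Restricting to the orthogonal complement of $\ker(L_0)$ in $L^2(\TT)$, this yields
$$ -\int_\TT L_0(g)\,g\,dk \ge c_0\int_\TT |g-\Pi_0(g)|^2\,dk, $$
where $\Pi_0$ is the $L^2(\TT)$-orthogonal projection onto $\ker(L_0)$. Fourth, I would translate this back to the original variables by substituting $g=V^{1/2}f$; since $\Pi_0(V^{1/2}f)=V^{1/2}\Pi(f)$ (the projections are conjugate under multiplication by $V^{1/2}$, using that $\ker(L_0)=V^{1/2}\ker(L)$ and that both projections are orthogonal in their respective inner products), this gives exactly
$$ -\int_\TT L(f)\,f\,dk \ge c_0\int_\TT V(k)\,|f-\Pi(f)|^2\,dk, $$
which is the claim.

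The main obstacle I anticipate is the spectral-gap step, specifically verifying that the eigenvalue $1$ of $K_0$ is \emph{isolated} with finite multiplicity and that no sequence of eigenvalues approaches $1$ from below. This does not follow from positivity alone; it is precisely here that the compactness of $K_0$ (Proposition \ref{prop:properties_integral_kernels} and the first Corollary) is indispensable, since for a compact operator the only possible accumulation point of eigenvalues is $0$, so every eigenvalue except possibly $0$ is isolated of finite multiplicity. A secondary technical point is checking that $\Pi$ is well defined and bounded as the $H$-orthogonal projection onto the (finite-dimensional, hence closed) subspace $\ker(L)$, and that the identity $\Pi_0\circ V^{1/2}=V^{1/2}\circ\Pi$ holds exactly; this is routine once one notes that multiplication by $V^{1/2}$ is the unitary intertwining $H\to L^2(\TT)$ under which $L$ and $L_0$ are conjugate, so it maps the $H$-orthogonal decomposition onto the $L^2(\TT)$-orthogonal one.
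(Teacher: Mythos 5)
Your proposal is correct and follows exactly the route the paper intends: it writes $-L=V\,\mathrm{Id}-K$, symmetrizes to $\mathrm{Id}-K_0$ on $L^2(\TT)$, and combines the nonnegativity from \eqref{eq:Lp} with the compactness of $K_0$ to isolate the eigenvalue $1$ (whose eigenspace is $V^{1/2}\ker(L)$) and extract the gap $c_0$; the paper simply asserts this implication without writing it out. The only cosmetic imprecision is calling $V^{1/2}$ an operator conjugation between $L$ and $L_0$ --- it is an isometry $L^2(\TT,V\,dk)\to L^2(\TT)$ that intertwines the quadratic forms (and hence the kernels and orthogonal projections), which is all your argument actually uses.
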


\medskip

To summarize, we have thus showed:
\begin{proposition}\label{prop:L}
The operator $L: L^2(\TT^1,V(k)\,dk)\tto L^2(\TT^1,V(k)^{-1}\,dk)$ is bounded
and satisfies:
\begin{enumerate}
\item The kernel of $L$ has dimension $2$ and is spanned by $1$ and $\frac{1}{\omega(k)}$.
\item For all $f\in L^2(\TT^1,V(k)\,dk)$, we have
\begin{equation}\label{eq:Lcons} \int_{\TT^1} L(f)\, dk=0 \quad \mbox{ and }\quad \int_{\TT^1} \frac{1}{\om(k)} L(f)\, dk =0.
\end{equation}
\item There exists $c_0>0$ such that
$$ - \int_{\TT^1} L(f)f\, dk \geq c_0\int V(k) |f-  \Pi( f)|^2\, dk$$
for all $f\in L^2(\TT^1,V(k)\,dk)$, where $\Pi(f)$ denotes the orthogonal projection of $f$ onto $\ker(L)$.
\end{enumerate}
\end{proposition}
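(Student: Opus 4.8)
Since this proposition merely collects the structural facts established earlier in the section, the plan is to assemble the three claims from the compactness of $K$, the coercivity identity \eqref{eq:Lp}, and the characterization of collision invariants in Theorem \ref{thm:LS}. The boundedness of $L$ is immediate once one writes $L(f)=K(f)-V f$: multiplication by $V$ is an isometry from $L^2(\TT^1,V\,dk)$ onto $L^2(\TT^1,V^{-1}\,dk)$ because $\int |Vf|^2 V^{-1}\,dk=\int|f|^2 V\,dk$, while $K$ is bounded between the same spaces by \eqref{eq:bdK1}; adding the two gives the bounded map claimed.

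For part (1) I would prove the two inclusions separately. For $\supseteq$, differentiating $C(W_{a,b})=0$ with respect to $a$ and $b$ (as already noted in the discussion of the linearized operator) yields $L(1)=0$ and $L(\om^{-1})=0$, and both $1$ and $\om^{-1}$ lie in $L^2(\TT^1,V\,dk)$ by \eqref{eq:V} (indeed $\om^{-2}V\sim|\sin\pi k|^{-1/3}$ is integrable near $k=0$). For $\subseteq$, if $L(f)=0$ then \eqref{eq:Lp} forces the bracket $\om_3f_3+\om_2f_2-\om_1f_1-\om f$ to vanish on the energy--momentum surface, i.e. $\om(k)f(k)$ is a collision invariant; Theorem \ref{thm:LS} then gives $\om(k)f(k)=c_1+c_2\om(k)$, so $f\in\mathrm{Span}\{1,\om^{-1}\}$. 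For part (2), the identity $\int_{\TT^1}L(f)\,dk=0$ is exactly the relation $V(k)=\int_\TT K(k',k)\,dk'$ recorded earlier, while $\int_{\TT^1}\om^{-1}L(f)\,dk=0$ follows from the self-adjointness of $L$ together with $L(\om^{-1})=0$, since $\int\om^{-1}L(f)\,dk=\int L(\om^{-1})f\,dk=0$ (the pairing between the dual weighted spaces is finite by Cauchy--Schwarz).

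The real content is part (3), and here I would pass to the conjugated operator $L_0:=V^{-1/2}L(V^{-1/2}\,\cdot\,)=K_0-\mathrm{Id}$ on $L^2(\TT^1,dk)$. By compactness of $K_0$ (the Corollary to Proposition \ref{prop:properties_integral_kernels}), $K_0$ is compact and self-adjoint, and applying \eqref{eq:Lp} to $f=V^{-1/2}g$ shows $-L_0\geq0$, i.e. $\mathrm{Id}-K_0\geq0$, so every eigenvalue of $K_0$ is $\leq1$. By the spectral theorem for compact self-adjoint operators these eigenvalues can accumulate only at $0$; hence the eigenvalue $1$, whose eigenspace is $V^{1/2}\ker L$ (two-dimensional by part (1)), is isolated, and setting $c_0:=1-\mu^\ast>0$ with $\mu^\ast$ the largest eigenvalue below $1$ gives $-\int_{\TT^1}L_0(g)\,g\,dk\geq c_0\int_{\TT^1}|g-Pg|^2\,dk$, where $P$ is the orthogonal projection onto $\ker L_0$. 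Undoing the conjugation with $g=V^{1/2}f$ — noting that $f\mapsto V^{1/2}f$ is an isometry $L^2(\TT^1,V\,dk)\to L^2(\TT^1,dk)$ carrying $\Pi$ to $P$ and satisfying $\int L_0(g)g\,dk=\int L(f)f\,dk$ and $\int|g-Pg|^2\,dk=\int V|f-\Pi f|^2\,dk$ — yields exactly the spectral gap of Lemma \ref{lem:coer}.

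\textbf{Main obstacle.} The only genuinely non-trivial ingredient is the compactness of $K_0$ (equivalently Proposition \ref{prop:properties_integral_kernels}), which rests on the delicate analysis of the collision kernel in \cite{lukkarinen2008anomalous}; granting it, the spectral gap follows immediately from the spectral theorem, and the remaining work — the weighted isometry, the identification of the two kernels, and tracking the cross terms that vanish because $L_0 Pg=0$ — is routine bookkeeping.
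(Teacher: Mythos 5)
Your proposal is correct and follows essentially the same route as the paper, which obtains this proposition by collecting the section's results: boundedness from the compactness of $K$ (via \eqref{eq:bdK1}), the kernel from \eqref{eq:Lp} together with the classification of collision invariants in Theorem \ref{thm:LS}, the conservation identities from the kernel relations $V(k)=\int K(k',k)\,dk'$ and self-adjointness, and the coercivity from Lemma \ref{lem:coer}. The only addition is that you spell out the spectral-theorem argument behind Lemma \ref{lem:coer} (isolated eigenvalue $1$ of the compact self-adjoint $K_0$, conjugation by $V^{1/2}$), which the paper asserts without detail; your version of that step is sound.
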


Note that the projection of $f$ onto $\ker(L)$ can be written as
$$\Pi(f)=T + S \left[\la V\ra \om(k)^{-1}- \la V\om^{-1}\ra \right] $$
with
$$ T =  \frac{1}{\la  V\ra } \int V(k) f(k) \, dk\mbox{ and } S = \frac{1}{m_0} \int \left[\la V\ra  \frac{V(k)}{\omega(k)} - \la V\om^{-1}\ra  V(k)\right] f(k)\, dk $$
where $m_0=\la V\ra^2 \la V \om^{-2}\ra -\la V\om^{-1}\ra^2 \la V\ra$ is a normalization constant. 
The operator  $\Pi$ is a continuous operator in $L^2(V(k)\, dk)$.

\medskip
We finish this section commenting on the existence of solutions for the equation for the sake of completeness:
\begin{proposition}[Cauchy Problem] \label{prop:Cauchyproblem}
There exists a unique solution in $L^\infty((0,\infty); L^2(\RR\times \TT))$ for equation \eqref{eq:linear2} with initial data $f_0\in L^2(\RR\times\TT)$.
\end{proposition}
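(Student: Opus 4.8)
The plan is to regard \eqref{eq:linear2}, for each fixed $\eps>0$, as a linear evolution equation on $L^2(\RR\times\TT)$,
\be
\pa_t f^\eps = -\eps^{1-\alpha}\om'(k)\pa_x f^\eps + \eps^{-\alpha}\Tb^2 L(f^\eps),
\ee
and to construct the solution as a strongly continuous semigroup applied to $f_0$. The generator splits as $B+P$, where $B:=-\eps^{1-\alpha}\om'(k)\pa_x$ is a transport operator and $P:=\eps^{-\alpha}\Tb^2 L$ acts only in the variable $k$. The two ingredients I would establish are: (a) $B$ generates a $C_0$-group of isometries on $L^2(\RR\times\TT)$; and (b) $P$ is a bounded operator on $L^2(\RR\times\TT)$. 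Granting these, the bounded-perturbation theorem for $C_0$-semigroups produces a unique mild solution $f^\eps\in C([0,\infty);L^2(\RR\times\TT))$.

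Ingredient (a) is immediate: since $\om(k)=|\sin\pi k|$ has bounded derivative ($|\om'|\le\pi$), for each fixed $k$ the equation is a constant-coefficient transport in $x$, solved by the translation $[S(t)f_0](x,k)=f_0(x-\eps^{1-\alpha}\om'(k)t,k)$, which is a strongly continuous group of isometries on $L^2(\RR\times\TT)$ with generator $B$. Ingredient (b) is the only delicate point, because $L$ is known to be bounded only \emph{between the weighted spaces} of Proposition \ref{prop:L}, while $V(k)\to 0$ as $k\to 0$. Here I would use Proposition \ref{prop:degeneracy_multiplicative_operator}: the bound $V\le c_2$ gives the continuous embedding $L^2(\TT)\hookrightarrow L^2(\TT,V\,dk)$, and equivalently $V^{-1}\ge c_2^{-1}$ gives $L^2(\TT,V^{-1}\,dk)\hookrightarrow L^2(\TT)$. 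Composing these embeddings with the boundedness $L:L^2(\TT,V\,dk)\to L^2(\TT,V^{-1}\,dk)$ yields $\|L(f)\|_{L^2(\TT)}\le C\|f\|_{L^2(\TT)}$; since $L$ acts only in $k$, the same estimate holds on $L^2(\RR\times\TT)$, which is (b).

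With existence settled, both uniqueness and the uniform-in-time bound follow from a single energy estimate. Multiplying \eqref{eq:linear2} by $f^\eps$ and integrating over $\RR\times\TT$, the transport contribution vanishes since $\int_\RR \om'(k)\pa_x f^\eps\, f^\eps\,dx=\tfrac12\om'(k)\int_\RR\pa_x(|f^\eps|^2)\,dx=0$ for a.e.\ $k$, while the collision contribution is nonpositive by the positivity of $-L$ in Proposition \ref{prop:L}(3). Hence
\be
\frac{\eps^\alpha}{2}\frac{d}{dt}\int_\RR\int_\TT |f^\eps|^2\,dk\,dx=\Tb^2\int_\RR\int_\TT L(f^\eps)\,f^\eps\,dk\,dx\le 0,
\ee
so that $\|f^\eps(t)\|_{L^2(\RR\times\TT)}\le\|f_0\|_{L^2(\RR\times\TT)}$ for every $t\ge 0$. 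This gives the asserted membership in $L^\infty((0,\infty);L^2(\RR\times\TT))$, and applying the same inequality to the difference of two solutions sharing the datum $f_0$ (which therefore solves the equation with zero initial data) yields uniqueness.

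Apart from this, the argument is entirely standard linear semigroup theory; the single genuinely delicate step is the boundedness of $L$ on the \emph{unweighted} space $L^2(\TT)$, where the degeneracy $V(k)\to 0$ as $k\to 0$ forces one to pass through the weighted estimates of Section \ref{sec:L} rather than bound $L$ directly.
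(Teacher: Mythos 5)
Your argument is correct and is essentially the paper's own proof in semigroup clothing: the bounded-perturbation theorem applied to the transport group is precisely the iterative scheme on the mild formulation $f(t)=f_0(x-\om'(k)t,k)+\int_0^t Lf(x-(t-s)\om'(k),s)\,ds$ that the paper invokes, and your derivation of $\|L(f)\|_{L^2(\RR\times\TT)}\leq C\|f\|_{L^2(\RR\times\TT)}$ from the weighted bound \eqref{eq:bdK1} together with the boundedness of $V$ is exactly the paper's justification of that key estimate. The closing energy estimate you add matches the a priori bound of Proposition \ref{prop:apriori}, so nothing further is needed.
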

\begin{proof}
A traditional method for solving the  Cauchy problem for this type of equations uses an iterative scheme based on the mild formulation: 
$$f(t,x,k) = f_0 (x-\omega'(k)t,k) + \int^t_0 Lf(x-(t-s) \omega'(k), s) ds$$
together with the estimate
$$\|L(f)\|_{L^2(\RR\times\TT)} \leq C\|f\|_{L^2(\RR \times \TT)}.$$
This last estimate is consequence of \eqref{eq:bdK1} and the boundedness of the function $V$. 
We refer to \cite{allaire2013transport} and \cite{mouhotlecturenotes} for further details on this method. 
\end{proof}

\section{Proof of Theorem \ref{thm:main}} \label{sec:main}

\subsection{A priori estimates}
As a first step in the proof of Theorem \ref{thm:main}, we establish some a priori estimates.
The coercivity property of $L$ (Lemma \ref{lem:coer}) gives the following proposition:
\begin{proposition}\label{prop:apriori}
Assume that $f_0\in L^2(\RR\times\TT)$.
Then, the function $f^\eps(t,x,k)$, solution of \eqref{eq:linear2} satisfies
\begin{equation}\label{eq:L2f} 
||f^\eps(t) || _{L^2(\RR\times\TT)}\leq ||f_0||_{L^2(\RR\times\TT)}\qquad \mbox{ for all } t\geq 0.
\end{equation}
Furthermore, $f^\eps$  can be expanded as follows:
\begin{equation}\label{eq:feps}
 f^\eps = \Pi (f^\eps) + \eps^{4/5} h^\eps,
 \end{equation}
where 
\begin{equation}\label{eq:h} 
\| h^\eps\| _{L^2_V((0,\infty)\times \RR\times\TT)} \leq C ||f_0||_{L^2(\RR\times\TT)}
\end{equation}
and  $\Pi (f^\eps )$ is the projection of $f^\eps$ onto $\ker(L)$, given by
$$ \Pi (f^\eps)(t,x,k)  = \tilde T^\eps (t,x)+\tilde S^\eps(t,x)\left[\la V\ra \om(k)^{-1}- \la V\om^{-1}\ra \right] $$
with
\begin{eqnarray} \nonumber
\tilde T^\eps(t,x) &=&  \frac{1}{\la  V\ra } \int V(k) f^\eps (t,x,k) \, dk\, ,\\
\tilde S^\eps(t,x) &=& \frac{1}{m_0} \int \left[\la V\ra  \frac{V(k)}{\omega(k)} - \la V\om^{-1}\ra  V(k)\right] f^\eps(t,x,k)\, dk  \label{eq:definitionStilde}
\end{eqnarray}
where $\tilde T^\eps, \tilde S^\eps$ are bounded in $L^\infty((0,\infty); L^2(\RR))$.
\end{proposition}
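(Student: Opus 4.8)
The plan is to derive all three assertions from the single energy identity obtained by testing equation \eqref{eq:linear2} against $f^\eps$ itself, combined with the coercivity of $L$ from Lemma \ref{lem:coer}. Working (as permitted) with $\Tb=1$, I would multiply \eqref{eq:linear2} by $f^\eps$ and integrate over $(x,k)\in\RR\times\TT$. The transport term drops out,
$$\eps\int_\RR\int_\TT \om'(k)\,\pa_x f^\eps\, f^\eps\, dk\, dx = \frac{\eps}{2}\int_\RR\int_\TT \om'(k)\,\pa_x\big((f^\eps)^2\big)\, dk\, dx = 0,$$
since $\om'(k)$ does not depend on $x$ and $f^\eps(t,\cdot,k)\in L^2(\RR)$. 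The time term produces $\frac{\eps^{8/5}}{2}\frac{d}{dt}\|f^\eps\|_{L^2(\RR\times\TT)}^2$, while the collision term is controlled by Lemma \ref{lem:coer}:
$$\int_\RR\int_\TT L(f^\eps)\,f^\eps\, dk\, dx \leq -c_0\int_\RR\int_\TT V(k)\,|f^\eps-\Pi(f^\eps)|^2\, dk\, dx \leq 0.$$

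This immediately gives the $L^2$ contraction \eqref{eq:L2f}. Integrating the resulting differential inequality from $0$ to $T$, discarding the nonnegative term $\tfrac{\eps^{8/5}}{2}\|f^\eps(T)\|^2$, and letting $T\to\infty$ yields
$$c_0\int_0^\infty\!\int_\RR\!\int_\TT V(k)\,|f^\eps-\Pi(f^\eps)|^2\, dk\, dx\, dt \leq \frac{\eps^{8/5}}{2}\,\|f_0\|_{L^2(\RR\times\TT)}^2.$$
Setting $h^\eps:=\eps^{-4/5}\big(f^\eps-\Pi(f^\eps)\big)$ — the exponent $4/5$ being exactly half of $\alpha=8/5$, which is precisely why this scaling closes — produces the decomposition \eqref{eq:feps} together with the bound \eqref{eq:h}, with $C=(2c_0)^{-1/2}$.

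For the last claim I would exploit that the coordinate functionals $\tilde T^\eps$ and $\tilde S^\eps$ in \eqref{eq:definitionStilde} are continuous linear functionals on $L^2(\TT,V\,dk)$, which is the continuity of $\Pi$ on $L^2(V\,dk)$ recorded at the end of Section \ref{sec:L}. Concretely, by Cauchy--Schwarz in the weight $V$, $|\tilde T^\eps(t,x)|^2\leq \la V\ra^{-1}\int_\TT V|f^\eps|^2\, dk$ and $|\tilde S^\eps(t,x)|^2\leq m_0^{-2}\big(\int_\TT g^2V^{-1}\, dk\big)\int_\TT V|f^\eps|^2\, dk$, where $g(k)=V(k)\big(\la V\ra\,\om(k)^{-1}-\la V\om^{-1}\ra\big)$; the integrability constant $\int_\TT g^2 V^{-1}\,dk$ is finite because $g^2/V\sim|k|^{-1/3}$ near $k=0$ by \eqref{eq:V} and $\om(k)\sim|k|$. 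Since $V$ is bounded, $\int_\TT V|f^\eps|^2\,dk\leq c_2\|f^\eps(t,x,\cdot)\|_{L^2(\TT)}^2$, so integrating in $x$ and invoking \eqref{eq:L2f} bounds both $\tilde T^\eps$ and $\tilde S^\eps$ in $L^\infty((0,\infty);L^2(\RR))$ by a constant times $\|f_0\|_{L^2}$.

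The main obstacle is technical rather than conceptual: the energy identity must be justified for merely $L^2$ solutions, which carry no a priori control on $\pa_t f^\eps$, so the integrations by parts in $t$ and $x$ and the vanishing of the boundary flux at $x=\pm\infty$ are not literally legitimate. I would secure them through the mild (Duhamel) formulation used in Proposition \ref{prop:Cauchyproblem}, or equivalently by a mollification in $(t,x)$ followed by passage to the limit. Once this is in place, everything follows from Lemma \ref{lem:coer} and the explicit small-$k$ behavior of $V$ and $\om$.
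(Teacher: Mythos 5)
Your proposal is correct and follows essentially the same route as the paper: the energy identity obtained by testing against $f^\eps$, the coercivity of $L$ from Lemma \ref{lem:coer} giving both the $L^2$ contraction and the $O(\eps^{\alpha/2})=O(\eps^{4/5})$ bound on $f^\eps-\Pi(f^\eps)$ in $L^2_V$, and Cauchy--Schwarz in the weight $V$ for the bounds on $\tilde T^\eps$ and $\tilde S^\eps$. The only difference is that you spell out the justification of the formal integrations by parts and the integrability of $g^2/V$ near $k=0$, which the paper leaves implicit.
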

\begin{proof}
Multiplying \eqref{eq:linear2} by $f^\eps$ and integrating with respect to $x$ and $k$, we get 
$$
\frac{1}{2}\frac{d}{dt}\|  f^\eps(t)\| ^2_{L^2(\RR\times\TT^1)} - \frac{1}{\eps^\alpha} \int_{\RR} \int_{\TT^1} L(f^\eps)f^\eps\, dk\, dx =0.
$$
Integrating with respect to $t$ and using Lemma \ref{lem:coer}, we deduce
$$
\frac{1}{2}\|  f^\eps(t)\| ^2_{L^2(\RR\times\TT^1)} +\frac{c_0}{\eps^\alpha} \int_0^t\int_{\RR} \int_{\TT^1} V(k) |f^\eps- \Pi( f^\eps)|^2\, dk\, dx\, ds \leq \frac{1}{2}\|  f^\eps_0||^2_{L^2(\RR\times\TT^1)}.
$$
which implies the proposition. The fact that $\tilde T^\eps, \tilde S^\eps \in L^\infty((0,\infty); L^2(\RR))$ is a direct consequence of this estimate and Cauchy-Schwartz.
\end{proof}

Because the singular terms in  $\Pi (f^\eps)$ (those involving $\omega(k)^{-1}$) play a particular role in the sequel, we will prefer to write $\Pi(f^\eps)$ as follows:
$$ \Pi f^\eps =  T^\eps + \frac{\la V\ra }{\omega} \tilde S^\eps(x,t)  $$
with
$$ T^\eps(t,x)=  \tilde T^\eps (t,x)- \tilde S^\eps(t,x) \la V\om^{-1}\ra 
$$
Finally, we set
\begin{equation}\label{eq:Sepsdef}
S^\eps (t,x)= \eps^{-3/5} \la V\ra \tilde S^\eps(t,x),
\end{equation}
leading to the following expansion of $f^\eps$:
\begin{equation} \label{eq:fexpansion}
 f^\eps (t,x,k) = T^\eps(t,x) + \eps^{\frac{3}{5}} S^\eps(t,x) \omega(k)^{-1}  + \eps^{\frac 4 5 } h^\eps(t,x,k).
 \end{equation}
Note that while $T^\eps$ and $h^\eps$ are clearly bounded (in appropriate functional spaces) in view of Proposition \ref{prop:apriori}, the scaling of $S^\eps$ may seem arbitrary at this point. 
However, we will see later on that $S^\eps$ defined as in \eqref{eq:Sepsdef} indeed converges to a non trivial function (in some weak sense).

\subsection{Laplace Fourier Transform}
As in \cite{Mouhot11}, the main tool in deriving the macroscopic equation for $T$ is the use of the Laplace-Fourier transform.
More precisely, we define
$$ \widehat {f^\eps} (p,\xi,k) = \int_{\RR}\int_0^\infty e^{-pt}e^{-i\xi  x} f^\eps(t,x,k)\, dt\, dx.$$
We also denote by $\widehat f_0(\xi,k)$ the Fourier transform of $f_0(x,k)$.

\begin{remark}\label{rem:laplacenorm}
We recall that the Fourier transform preserves the $L^2(\RR)$ norm (Parseval's theorem).
It is also easy to see that the Laplace transform of an $L^1$ function is in $L^\infty$. However our functions are not $L^1$ with respect to $t$. Instead, we will make use of the simple fact that for a given function $g(t)$, its Laplace transform $\widehat g(p)$ satisfies
\begin{equation}\label{eq:laplaceL^2}
 | \widehat g(  p) | \leq  \frac{1}{p}\| g\|_{L^\infty (0,\infty)}\quad  \mbox{ and }\quad 
 | \widehat g(  p) | \leq \frac{1}{\sqrt{2p}}\| g\|_{L^2(0,\infty)}\end{equation}
for all $p> 0$.
\end{remark}

Taking the Laplace Fourier transform of Equation \eqref{eq:linear2} (with $\overline T=1$), we obtain:
$$
\eps^\alpha p \hfe - \eps^\alpha \widehat{f_0} + i\eps \omega'(k) \xi \hfe = K(\hfe)-V\hfe
$$
which easily yields
\begin{equation}\label{eq:ll} 
\hfe(p,\xi,k) = \frac{ \eps^\alpha }{\eps^\alpha p+V(k)+ i\eps \omega'(k) \xi } \widehat{f_0} + \frac{ 1 }{\eps^\alpha p+V(k)+ i\eps \omega'(k) \xi } K(\hfe).
\end{equation}
We recall that $L(f)=K(f)-Vf$ with $K(f)=\int K(k,k') f(k')\, dk'$. 
The fact that $\int L(f)\, dk=0$ and $\int\frac{1}{\omega(k)}L(f)\, dk=0$ for all $f$ implies
$$ V(k) = \int K(k',k) dk', \qquad \frac{V(k)}{\omega(k)}= \int K(k',k)\frac{1}{\omega(k')}\, dk'$$

Multiplying \eqref{eq:ll} by $K(k',k)$ and integrating with respect to $k$ and $k'$, we get
\begin{align*} 
\int_\TT K(\hfe)(k')dk' &  = \int_{\TT}\int_{\TT} \frac{ \eps^\alpha K(k',k) }{\eps^\alpha p+V(k)+ i\eps \omega'(k) \xi } \widehat{f_0}(\xi,k) \, dk  \, dk'\\
& \quad + \int_\TT \int_{\TT} \frac{ K(k',k) }{\eps^\alpha p+V(k)+ i\eps \omega'(k) \xi } K(\hfe)(k)\, dk\,dk' \\
&  = \int_{\TT} \frac{ \eps^\alpha V(k) }{\eps^\alpha p+V(k)+ i\eps \omega'(k) \xi } \widehat{f_0}(\xi,k) \, dk  \\
&\quad  + \int_{\TT} \frac{ V(k) }{\eps^\alpha p+V(k)+ i\eps \omega'(k) \xi } K(\hfe)(k)\, dk.
\end{align*}
We deduce
\begin{align}
0 & = \int_{\TT} \frac{ V(k) }{\eps^\alpha p+V(k)+ i\eps \omega'(k) \xi } \widehat{f_0}(\xi,k) \, dk  \nonumber \\
&  \quad +   \eps^{-\alpha} \int_{\TT} \left( \frac{ V(k) }{\eps^\alpha p+V(k)+ i\eps \omega'(k) \xi } -1\right) K(\hfe)(k)\, dk.\label{eq:symb1}
\end{align}

Similarly, multiplying \eqref{eq:ll} by $K(k',k)\frac{\eps^{\frac{3}{5}}}{\omega(k')}$, and
 we get:
\begin{align}
0 & = \eps^{\frac{3}{5}} \int_{\TT} \frac{   V(k) }{\eps^\alpha p+V(k)+ i\eps \omega'(k) \xi } \frac{\widehat{f_0}(\xi,k)}{\omega(k)} \, dk  \nonumber \\
&  \quad +   \eps^{-\alpha} \eps^{\frac{3}{5}}  \int_{\TT} \left( \frac{ V(k) }{\eps^\alpha p+V(k)+ i\eps \omega'(k) \xi } -1\right)  \frac{K(\hfe)(k)}{\omega(k)}  \, dk.\label{eq:symb2}
\end{align}

Next, we write
$$ K(\hfe) = K(\Pi(\hfe)) + K(\hfe-\Pi(\hfe)) = V \Pi(\hfe) + K(\hfe-\Pi(\hfe))$$
where we rewrite
$$  \Pi(\hfe) = \widehat T^\eps + \eps^{3/5} \frac{1}{\omega(k)} \widehat S^\eps.$$
We can thus rewrite \eqref{eq:symb1} as follows:
\begin{equation} \label{eq:asymptotic1}
\mathcal F_1^\eps(\widehat f^0)+ a_1^\eps(p,\xi) \widehat T^\eps(p,\xi) + a_2^\eps(p,\xi) \widehat S^\eps(p,\xi) + R_1^\eps(p,\xi) = 0
\end{equation}
and \eqref{eq:symb2} as follows: 
\begin{equation}\label{eq:asymptotic2}
\mathcal F_2^\eps(\widehat f^0)+ a_2^\eps(p,\xi) \widehat T^\eps(p,\xi) + a_3^\eps(p,\xi) \widehat S^\eps(p,\xi) + R_2^\eps(p,\xi) = 0,
\end{equation}
where for $\alpha=8/5$, we have:
\begin{align*}
\mathcal F_1^\eps(\widehat f^0) & = \int_{\TT} \frac{ V(k) }{\eps^{\frac{8}{5}} p+V(k)+ i\eps \omega'(k) \xi } \widehat{f_0}(\xi,k) \, dk  \\
\mathcal F_2^\eps(\widehat f^0) &  =  \eps^{\frac{3}{5}} \int_{\TT} \frac{   V(k) }{\eps^{\frac{8}{5}} p+V(k)+ i\eps \omega'(k) \xi } \frac{\widehat{f_0}(\xi,k)}{\omega(k)} \, dk,
\end{align*}
\begin{align*}  
a_1^\eps(p,\xi)& := \eps^{-\frac{8}{5}} \int_{\TT} \left( \frac{ V(k) }{\eps^{\frac{8}{5}} p+V(k)+ i\eps \omega'(k) \xi } -1\right)V(k)\, dk\\
a_2^\eps(p,\xi)& := \eps^{-\frac{8}{5}} \int_{\TT} \left( \frac{ V(k) }{\eps^{\frac{8}{5}} p+V(k)+ i\eps \omega'(k) \xi } -1\right)\frac{\eps^{\frac{3}{5}}V(k)}{\omega(k)}\, dk\\
& =\eps^{-1} \int_{\TT} \left( \frac{ V(k) }{\eps^{\frac{8}{5}}p+V(k)+ i\eps \omega'(k) \xi } -1\right)\frac{V(k)}{\omega(k)} \, dk\\
a_3^\eps(p,\xi)& := \eps^{-1} \int_{\TT} \left( \frac{ V(k) }{\eps^{\frac{8}{5}} p+V(k)+ i\eps \omega'(k) \xi } -1\right)\frac{\eps^{\frac{3}{5}}V(k)}{\omega(k)^2}\, dk
\end{align*}
and 
\begin{align*}
R_1^\eps(\xi,p) &:=  \eps^{-\frac{8}{5}} \int_{\TT} \left( \frac{ V(k) }{\eps^{\frac{8}{5}} p+V(k)+ i\eps \omega'(k) \xi } -1\right) K(\hfe-\Pi(\hfe))(k)\, dk\\
R_2^\eps(\xi,p) &:= \eps^{-1} \int_{\TT} \left( \frac{ V(k) }{\eps^{\frac{8}{5}} p+V(k)+ i\eps \omega'(k) \xi } -1\right) \frac{1}{\omega(k)}K(\hfe-\Pi(\hfe))(k)\, dk
\end{align*}

In order to prove the main theorem, we now need to pass to the limit in \eqref{eq:asymptotic1} and \eqref{eq:asymptotic2}.
The following three propositions, which are proved in the next section, give  the necessary results for that.

First, we have the following limits for the terms involving the  initial data:
\begin{proposition}\label{prop:initial}
The following limits hold for all $P\geq 0$:
\begin{align*}
 \mathcal F_1^\eps(\widehat f^0)(\xi,p) & \tto \int_\TT \widehat f^0(\xi,k)\, dk = \widehat T_0(\xi) &  \mbox{ in } L^2( (0,P)\times\RR) \\
 \mathcal F_2^\eps(\widehat f^0) (\xi,p) & \tto 0   & \mbox{ in } L^1( (0,P)\times\RR)
 \end{align*}
when $\eps\to0$.
\end{proposition}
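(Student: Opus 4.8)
The plan is to analyze the two integrals defining $\mathcal F_1^\eps$ and $\mathcal F_2^\eps$ separately, exploiting the precise behavior of the symbol
$$
m_\eps(k) := \frac{V(k)}{\eps^{8/5}p+V(k)+i\eps\,\omega'(k)\xi}
$$
as $\eps\to 0$. The key pointwise observation is that for each fixed $k$ with $V(k)\neq 0$ (i.e.\ $k\neq 0$ mod $1$), both the term $\eps^{8/5}p$ and the term $i\eps\,\omega'(k)\xi$ in the denominator vanish as $\eps\to 0$, so $m_\eps(k)\to 1$ pointwise a.e.\ in $k$. Moreover $|m_\eps(k)|\leq 1$ uniformly (the real part of the denominator is $\eps^{8/5}p+V(k)\geq V(k)>0$, so $|m_\eps(k)|\leq V(k)/|\eps^{8/5}p+V(k)+i\eps\omega'(k)\xi|\leq V(k)/(\eps^{8/5}p+V(k))\leq 1$). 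This uniform bound is what lets me control everything by dominated convergence.

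For the first limit I would argue as follows. For fixed $(\xi,p)$, write
$$
\mathcal F_1^\eps(\widehat f^0)(\xi,p)-\widehat T_0(\xi) = \int_\TT \bigl(m_\eps(k)-1\bigr)\,\widehat f_0(\xi,k)\,dk,
$$
and since $m_\eps(k)\to 1$ a.e.\ with $|m_\eps(k)-1|\leq 2$, dominated convergence (in $k$, using $\widehat f_0(\xi,\cdot)\in L^1(\TT)$ for a.e.\ $\xi$, which follows from $\widehat f_0(\xi,\cdot)\in L^2(\TT)$ on the compact torus) gives pointwise convergence to $0$ for a.e.\ $(\xi,p)$. To upgrade this to $L^2((0,P)\times\RR)$ convergence I would use a second dominated-convergence argument at the level of $(\xi,p)$: the integrand is dominated by $\bigl(\int_\TT 2|\widehat f_0(\xi,k)|\,dk\bigr)^2$, which after Cauchy--Schwarz in $k$ is bounded by $2\|\widehat f_0(\xi,\cdot)\|_{L^2(\TT)}^2$, an $L^1(\RR)$ function of $\xi$ by Parseval; multiplying by the finite measure $dp$ on $(0,P)$ keeps it integrable, so the squared integrand is dominated uniformly in $\eps$ and $L^2$-convergence follows.

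For the second limit the mechanism is the prefactor $\eps^{3/5}$ rather than cancellation in the symbol. I would bound
$$
|\mathcal F_2^\eps(\widehat f^0)(\xi,p)| \leq \eps^{3/5}\int_\TT |m_\eps(k)|\,\frac{|\widehat f_0(\xi,k)|}{\omega(k)}\,dk \leq \eps^{3/5}\int_\TT \frac{|\widehat f_0(\xi,k)|}{\omega(k)}\,dk,
$$
using $|m_\eps|\leq 1$. The only subtlety is the singularity $\omega(k)^{-1}\sim|k|^{-1}$ near $k=0$, which is not integrable, so the naive bound may diverge. The honest estimate must extract a compensating smallness from the denominator near $k=0$: there $V(k)\sim v_0|k|^{5/3}$ is small, so for $k$ comparable to $\eps^{3/5}$ the term $i\eps\omega'(k)\xi\sim \eps|\xi|$ dominates and forces $|m_\eps(k)|\lesssim V(k)/(\eps|\xi|)\sim |k|^{5/3}/\eps$, which makes $|m_\eps(k)|\omega(k)^{-1}\lesssim |k|^{2/3}/\eps$ integrable near $0$. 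Splitting the integral at $|k|\sim \eps^{3/5}$ and estimating each piece, then integrating the resulting bound in $(\xi,p)$ against the finite measure, should give $\|\mathcal F_2^\eps\|_{L^1((0,P)\times\RR)}\to 0$. The main obstacle is precisely this careful book-keeping of the small-$k$ singularity: one must verify that the $\eps^{3/5}$ prefactor genuinely beats the borderline-divergent weight $\omega^{-1}$ once the correct behavior of $m_\eps$ near $k=0$ is used, which is where the specific exponent $3/5$ (tied to the degeneracy $V\sim|k|^{5/3}$ of Proposition \ref{prop:degeneracy_multiplicative_operator}) enters essentially.
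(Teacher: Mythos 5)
Your argument for $\mathcal F_1^\eps$ is exactly the paper's: the bound $|m_\eps(k)|\le V(k)/(\eps^{8/5}p+V(k))\le 1$ plus pointwise convergence and two layers of dominated convergence (the paper compresses this into one sentence). For $\mathcal F_2^\eps$ your skeleton is also the same — the prefactor $\eps^{3/5}$ must beat a logarithmically divergent integral, and the degeneracy $V(k)\sim v_0|k|^{5/3}$ turns $V/\omega$ into the integrable weight $|k|^{2/3}$ near $k=0$ — but you tame the $\omega(k)^{-1}$ singularity with the \emph{transport} part $i\eps\omega'(k)\xi$ of the denominator, whereas the paper uses the \emph{real} part $\eps^{8/5}p$: it keeps $|m_\eps(k)|\le V(k)/(\eps^{8/5}p+V(k))$, so that $\int_0^{1/2}\frac{|k|^{2/3}}{\eps^{8/5}p+|k|^{5/3}}\,dk\le C(1+|\ln(\eps^{8/5}p)|)$ and $\|\mathcal F_2^\eps\|_{L^1}\lesssim \eps^{3/5}(1+|\ln(\eps^{8/5}P)|)\to 0$. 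Your variant does work, but with one caveat you should make explicit: the bound $|m_\eps(k)|\lesssim V(k)/(\eps|\xi|)$ degenerates as $\xi\to 0$ (and is vacuous at $\xi=0$), so you must use $\min\{1,\,CV(k)/(\eps|\xi|)\}$ with the splitting point $|k|\sim(\eps|\xi|)^{3/5}$ (not $\eps^{3/5}$, where you have silently set $|\xi|\sim 1$), which yields a factor $1+|\ln(\eps|\xi|)|$ \emph{depending on $\xi$}. This is still integrable near $\xi=0$, but it costs an extra logarithm of decay of $\|\widehat f_0(\xi,\cdot)\|_{L^\infty(\TT)}$ at $|\xi|\to\infty$ compared with the paper's $\xi$-independent factor $1+|\ln(\eps^{8/5}p)|$, which is simply integrated over the bounded interval $(0,P)$. (Both your proof and the paper's implicitly require $\widehat f_0(\xi,\cdot)$ to be bounded in $k$ and integrable in $\xi$, which is more than the stated $f_0\in L^2(\RR\times\TT)$; that gap is shared with the paper and not specific to your route.) The $p$-based estimate is the slightly more economical choice, but your mechanism is sound once the small-$|\xi|$ region is handled as above.
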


Next, we pass to the limit in the symbol $a_i^\eps(p,\xi)$:
\begin{proposition}\label{prop:aeps1}
The following limits hold pointwise $(p,\xi)\in (0,\infty)\times\RR$ and strongly in $L^p_{loc}((0,\infty)\times\RR)$ for all $p\in(1,\infty)$:
\begin{align} 
a_1^\eps(p,\xi)&  \tto 
-p-\kappa_1 |\xi|^{\frac{8}{5}} \quad \mbox{ with } \quad \kappa_1 =  \frac{6}{5} \left( \frac{\pi}{v_0}\right)^{3/5}    \int_{0}^{\infty}    \frac{ z^{3/5}}{ z^2+1 }  \, dz\\
a_2^\eps(p,\xi)&  \tto - \kappa_2 |\xi| \quad \qquad \mbox{
with }\kappa_2=\frac{6}{5} \int_0^\infty \frac{1}{z^2+1}\, dz \\
a_3^\eps(p,\xi)&  \tto -\kappa_3 |\xi|^{\frac{2}{5}} \quad \qquad  \mbox{ with } 
\kappa_3=\frac{6}{5}\left( \frac{v_0}{\pi}\right)^{3/ 5}\int_0^\infty \frac{z^{-3/ 5}}{z^2+1}\, dz
\end{align}
Furthermore, $a^\eps_1, a^\eps_2, a^\eps_3 \in L^\infty_{loc}((0,\infty)\times \RR)$ uniformly with respect to $\eps$.
\end{proposition}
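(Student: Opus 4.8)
The plan is to reduce each symbol to a real integral by a symmetry argument, and then extract the power of $|\xi|$ by rescaling near the degeneracy point $k=0$ of $V$. The starting point is the algebraic identity
$$ \frac{V(k)}{D_\eps(k)}-1 = -\frac{\eps^{8/5}p+i\eps\om'(k)\xi}{D_\eps(k)}, \qquad D_\eps(k):=\eps^{8/5}p+V(k)+i\eps\om'(k)\xi, $$
which is the common factor in $a_1^\eps,a_2^\eps,a_3^\eps$. This splits each symbol into a \emph{$p$-part} (proportional to $\eps^{8/5}p$) and a \emph{$\xi$-part} (proportional to $\eps\om'(k)\xi$), which I treat separately.

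First I dispose of the $p$-parts. For $a_1^\eps$ the $p$-part equals $-p\int_\TT \frac{V(k)}{D_\eps(k)}\,dk$; since $|V/D_\eps|\le 1$ (because $|D_\eps|\ge \mathrm{Re}\,D_\eps\ge V$) and $V/D_\eps\to 1$ pointwise for $k\neq 0$, dominated convergence gives exactly the contribution $-p$. For $a_2^\eps$ and $a_3^\eps$ the $p$-parts carry the extra prefactors $\eps^{3/5}$ and $\eps^{6/5}$; using $V(k)\sim v_0(\pi k)^{5/3}$ and $\om(k)\sim \pi k$ near $0$ from Proposition \ref{prop:degeneracy_multiplicative_operator}, the corresponding integrals $\int_\TT \frac{V}{D_\eps\om}\,dk$ and $\int_\TT \frac{V}{D_\eps\om^2}\,dk$ grow at most like $\log(1/\eps)$ and $\eps^{-3/5}$ respectively, so both $p$-parts vanish as $\eps\to0$.

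The heart of the matter is the $\xi$-parts, and here I exploit the symmetry $k\mapsto -k$ on $\TT$ (equivalently $k\mapsto 1-k$, which is the content of $V(1-k)=V(k)$): under it $V$ and $\om$ are even, $\om'$ is odd, and $D_\eps$ is replaced by $\overline{D_\eps}$. Folding the integral onto $[0,\tfrac12]$, the odd factor $\om'(k)$ combines with $\overline{D_\eps}-D_\eps=-2i\eps\om'(k)\xi$ to produce a real, manifestly convergent expression; for instance the $\xi$-part of $a_1^\eps$ becomes
$$ -2\xi^2\eps^{2/5}\int_0^{1/2}\frac{\om'(k)^2\,V(k)}{|D_\eps(k)|^2}\,dk, $$
with the analogous formulas carrying extra factors $\om^{-1}$ and $\om^{-2}$ for $a_2^\eps$ and $a_3^\eps$. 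I then localize near $k=0$ and perform the rescaling $k=\eps^{3/5}z$, which is precisely the scale balancing $V(k)\sim v_0\pi^{5/3}\eps z^{5/3}$ against $\eps\om'(k)\xi\sim \eps\pi\xi$. Passing to the limit (dominated convergence, with the tails and the region away from $0$ controlled via $c_1|\sin\pi k|^{5/3}\le V\le c_2|\sin\pi k|^{5/3}$) turns each integral into one over $z\in(0,\infty)$; a final substitution $v_0\pi^{5/3}z^{5/3}=\pi|\xi|u$ factors out $|\xi|^{8/5}$, $|\xi|$ and $|\xi|^{2/5}$ respectively and identifies the constants $\kappa_1,\kappa_2,\kappa_3$.

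Finally, the pointwise limits are upgraded to strong $L^p_{loc}$ convergence and the uniform $L^\infty_{loc}$ bounds are obtained from the same rescaling estimates: the rescaled integrands are dominated uniformly in $\eps$ on compact subsets of $(0,\infty)\times\RR$, so Vitali's theorem applies. I expect the main obstacle to be the rigorous control of the rescaling step for the $\xi$-parts — one must show that replacing $V(k)$ by its leading profile $v_0(\pi k)^{5/3}$ and $\om'$ by its limit introduces only negligible errors after multiplication by the large prefactors $\eps^{-3/5}$ and $\eps^{-2/5}$, and that the contribution of $k$ bounded away from $0$ (where $V$ is bounded below) is killed by those prefactors. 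This is exactly where the sharp asymptotics $|\sin\pi k|^{-5/3}V(k)\to v_0$ of Proposition \ref{prop:degeneracy_multiplicative_operator}, rather than just the two-sided bound, become essential.
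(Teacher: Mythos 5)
Your proposal follows essentially the same route as the paper's own proof: the same splitting into a $p$-part and a $\xi$-part via the identity $V/D_\eps-1=-(\eps^{8/5}p+i\eps\om'(k)\xi)/D_\eps$, the same use of the evenness of $V,\om$ and oddness of $\om'$ to reduce the $\xi$-part to a real integral with denominator $|D_\eps|^2$, the same rescaling of $k$ at the scale $(\eps|\xi|)^{3/5}$ near the degeneracy of $V$ to extract the powers $|\xi|^{8/5},|\xi|,|\xi|^{2/5}$ and the constants $\kappa_i$, and the same pointwise-plus-uniform-local-bound upgrade to $L^p_{loc}$ convergence. The only discrepancy is your intermediate estimate $\eps^{-3/5}$ for $\int_\TT V/(D_\eps\om^2)\,dk$, whose actual order is $(\eps^{8/5}p)^{-3/5}=\eps^{-24/25}p^{-3/5}$; since $\eps^{6/5}\cdot\eps^{-24/25}=\eps^{6/25}\to0$, the conclusion that this $p$-part vanishes is unaffected.
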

Finally, we need to show that the remainder terms, involving $f^\eps-\Pi(f^\eps)$, go to zero: 
\begin{proposition}\label{prop:remainder}
For all $0<a<P$ and $K>0$, we have 
$$R^\eps_i\rightarrow 0 \quad \mbox{in } L^2((a,P)\times (-K,K))$$
as $\eps \rightarrow 0$ for $i=1,2$. 
 \end{proposition}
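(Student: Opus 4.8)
The plan is to exploit the decomposition \eqref{eq:feps}, which in Laplace--Fourier variables reads $\widehat{f^\eps}-\Pi(\widehat{f^\eps})=\eps^{4/5}\widehat{h^\eps}$ (since $\Pi$ commutes with the transform in $(t,x)$), together with the \emph{improved} kernel estimate \eqref{eq:bdK}. Writing the Fourier multiplier as
\[
m^\eps(k):=\frac{V(k)}{\eps^{8/5}p+V(k)+i\eps\omega'(k)\xi}-1=\frac{-\eps^{8/5}p-i\eps\omega'(k)\xi}{\eps^{8/5}p+V(k)+i\eps\omega'(k)\xi},
\]
we have $R_1^\eps=\eps^{-4/5}\int_\TT m^\eps\,K(\widehat{h^\eps})\,dk$ and $R_2^\eps=\eps^{-1/5}\int_\TT m^\eps\,\omega^{-1}K(\widehat{h^\eps})\,dk$. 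A direct computation gives $|m^\eps(k)|\le 1$ for all $k$, and $m^\eps$ is concentrated near $k=0$: away from the origin $V$ is bounded below by \eqref{eq:V}, so $|m^\eps|\to 0$ uniformly on $\{|k|\ge\delta\}$, and only a neighbourhood of $k=0$ contributes in the limit.

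The main obstacle is that the naive estimate, using $|m^\eps|\le1$, Cauchy--Schwarz with weight $V$, and the bound \eqref{eq:bdK1}, only yields $|R_1^\eps|\le C\,\eps^{-4/5}(\int_\TT|m^\eps|^2V\,dk)^{1/2}\|\widehat{h^\eps}\|_{L^2_V(\TT)}$; the scaling $V(k)\sim|k|^{5/3}$, $\omega'(k)\sim\pi$ near $k=0$ (Proposition \ref{prop:degeneracy_multiplicative_operator}) shows that the concentration scale is $|k|\sim\eps^{3/5}$, at which $\int_\TT|m^\eps|^2V\,dk\sim\eps^{8/5}$, so the two powers of $\eps$ cancel exactly and one obtains only boundedness, not convergence to $0$. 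The remedy is the \emph{sharper} bound \eqref{eq:bdK}, which encodes extra decay of $K(\widehat{h^\eps})$ near $k=0$.

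Concretely, I would split $m^\eps K(\widehat{h^\eps})=[m^\eps(\sin\pi k)^{1/6-\eta/2}V^{1/2}]\,[K(\widehat{h^\eps})(\sin\pi k)^{-1/6+\eta/2}V^{-1/2}]$ and apply Cauchy--Schwarz; the second factor is controlled by \eqref{eq:bdK} and $\|\widehat{h^\eps}\|_{L^2_V(\TT)}$ (as $(\sin\pi k)^{1/3-\eta}$ is bounded). For the first factor, the change of variables $|k|=\eps^{3/5}r$ together with Proposition \ref{prop:degeneracy_multiplicative_operator} gives
\[
\int_\TT|m^\eps|^2(\sin\pi k)^{1/3-\eta}V\,dk\lesssim(\eps|\xi|)^{9/5-3\eta/5},\qquad \int_\TT|m^\eps|^2\omega^{-2}(\sin\pi k)^{1/3-\eta}V\,dk\lesssim(\eps|\xi|)^{3/5-3\eta/5},
\]
the rescaled $r$-integrals converging for $0<\eta<1$. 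Collecting powers, both prefactors $\eps^{-4/5}$ and $\eps^{-1/5}$ are beaten and one obtains the pointwise bound $|R_i^\eps(p,\xi)|\le C\,\eps^{1/10-3\eta/10}(1+|\xi|)^{\sigma}\|\widehat{h^\eps}(p,\xi,\cdot)\|_{L^2_V(\TT)}$ for $i=1,2$ and some $\sigma\le1$, which is a genuine gain provided $\eta<1/3$.

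Finally, to pass from this pointwise control to convergence in $L^2((a,P)\times(-K,K))$, I would integrate the square of the bound over the box. Denoting by $H^\eps(t,\xi,k)$ the Fourier transform of $h^\eps$ in $x$, the Laplace estimate \eqref{eq:laplaceL^2} gives, on $\{p\ge a\}$, $\|\widehat{h^\eps}(p,\xi,\cdot)\|_{L^2_V(\TT)}^2\le\frac{1}{2a}\int_0^\infty\|H^\eps(t,\xi,\cdot)\|_{L^2_V(\TT)}^2\,dt$, and Parseval in $x$ with the a priori bound \eqref{eq:h} yields $\int_{-K}^K\int_0^\infty\|H^\eps(t,\xi,\cdot)\|^2_{L^2_V(\TT)}\,dt\,d\xi\le\|h^\eps\|^2_{L^2_V}\le C\|f_0\|^2_{L^2}$. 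Hence $\|R_i^\eps\|^2_{L^2((a,P)\times(-K,K))}\le C\,\eps^{1/5-3\eta/5}(1+K)^{2\sigma}\tfrac{P-a}{a}\|f_0\|^2_{L^2}\to 0$ as $\eps\to0$, for $\eta<1/3$. The heart of the argument, and the step requiring the most care, is the weighted concentration estimate for $m^\eps$ coupled with \eqref{eq:bdK}: the exact cancellation at the level of \eqref{eq:bdK1} means the whole result hinges on the extra $(\sin\pi k)$--decay of $K(\widehat{h^\eps})$ near the degeneracy $k=0$.
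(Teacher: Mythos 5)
Your proposal is correct and follows essentially the same route as the paper: the same Cauchy--Schwarz splitting with the weight $(\sin\pi k)^{1/3-\eta}V$, the same appeal to the improved compactness bound \eqref{eq:bdK} (whose necessity you correctly identify via the exact cancellation at the level of \eqref{eq:bdK1}), the same rescaling $|k|\sim(\eps|\xi|)^{3/5}$ to estimate the multiplier integrals (matching the paper's Lemma \ref{lem:remainder}), and the same combination of \eqref{eq:laplaceL^2} with the a priori bound \eqref{eq:h} to conclude with the rate $\eps^{(1-3\eta)/5}$.
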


\begin{proof}[Proof of Theorem \ref{thm:main}]
We are now ready to prove Theorem \ref{thm:main}.
First, using Proposition \ref{prop:apriori}, we see that up to a subsequence, $T^\eps(t,x)$ converges weakly to $T(t,x)$ in $L^2((0,\tau)\times\RR)$ for all $\tau$ (the uniqueness of the limit will give the convergence of the whole sequence).

Next, for a given test function $\vphi(p,\xi)$ in $\mathcal D((0,\infty)\times\RR)$, we then have
\begin{equation}\label{eq:L2weak} 
\int_0^\infty\int_\RR \widehat T^\eps(p,\xi) \vphi(p,\xi) \, d\xi\, dp =  \int_0^\infty\int_\RR T^\eps(t,x) \widehat  \vphi(t,x) \, dx\, dt
\end{equation}
where $\widehat  \vphi \in L^2((0,\infty)\times\RR)$. This last fact is the classical  Parseval inequality for the Fourier transform, while  for the Laplace transform,  it follows from Minkowski's integral inequality:
\begin{align*}
\left( \int_0^\infty \left( \int_0^\infty  e^{-pt} \vphi(p)\, dp\right)^2 \, dt \right)^{1/2}
&  \leq 
\int_0^\infty \left( \int_0^\infty e^{-2pt}\, dt \right)^{1/2}\vphi(p)\, dp\\
& \leq \int_0^\infty\frac{1}{\sqrt{2p}} \vphi(p)\, dp <\infty.
\end{align*}
 Thus $\widehat T^\eps$ converges to $\widehat T$ in $\mathcal D'((0,\infty)\times\RR)$. Since $\widehat T^\eps$ is also bounded in $L^2_{loc}((0,\infty)\times\RR)$ (using \eqref{eq:laplaceL^2}), 
we deduce that (up to another subsequence) it converges weakly  in $L^2_{loc}((0,\infty)\times\RR)$ to $\widehat T$.

%%%%%

%%%%%%
\medskip

In order to derive the equation satisfied by $\widehat T$,  
we need to pass to the limit in \eqref{eq:asymptotic1} and \eqref{eq:asymptotic2}. However, we do not know that $S^\eps$ (defined  in \eqref{eq:Sepsdef}) is bounded in some functional space.
So we multiply equation \eqref{eq:asymptotic1} by $a^\eps_3$ and \eqref{eq:asymptotic2} by $a^\eps_2$ and consider their difference, in order to get rid of the terms in $\widehat S^\eps$:
\begin{eqnarray*}% \label{eq:asymptotics}
0&=&a_3^\eps(p,\xi)\mathcal F_1^\eps(\widehat f^0)+ \left( a_3^\eps(p,\xi) a_1^\eps(p,\xi) - \left(a_2^\eps(p,\xi)\right)^2 \right)\widehat T^\eps(p,\xi)  \\
&&+\, a_3^\eps(p,\xi) R_1^\eps(p,\xi) -a_2^\eps(p,\xi)\mathcal F_2^\eps(\widehat f^0) -a_2^\eps(p,\xi) R_2^\eps(p,\xi). \nonumber
\end{eqnarray*} 
Using Proposition \ref{prop:aeps1}, Proposition \ref{prop:remainder} and Proposition \ref{prop:initial}, we can now pass to the limit in this equation in $\mathcal D'((0,\infty)\times\RR)$ and deduce:
$$-\kappa_3|\xi|^{2/5}\widehat T_0+\left(-\kappa_3 |\xi|^{2/5}(-p-\kappa_1|\xi|^{8/5}) -\kappa_2^2 |\xi|^2 \right) \widehat T=0 \quad \mbox{ in } \mathcal D'((0,\infty)\times\RR).$$
Furthermore, factorizing $-\kappa_3 |\xi|^{2/5}$ in this last equation we get
$$-\kappa_3|\xi|^{2/5}\left(\widehat T_0-p\widehat T-(\kappa_1-\frac{\kappa_2^2}{\kappa_3})|\xi|^{8/5} \widehat T\right)=0 \quad \mbox{ in } \mathcal D'((0,\infty)\times\RR).$$
This implies that the function
\begin{equation}\label{eq:finallimit}
g(p,\xi):=\widehat T_0-p\widehat T-\left(\kappa_1-\frac{\kappa_2^2}{\kappa_3} \right) |\xi|^{8/5}\widehat T,
\end{equation}
which belongs to $L^2_{loc}((0,\infty)\times\RR)$, satisfies
$$g(p,\xi)=0 \mbox{ a.e. in } (0,\infty)\times \RR$$
which gives \eqref{eq:difffT}-\eqref{eq:initT}.

\medskip

To complete the proof of Theorem \ref{thm:main},
it remains to show that $f^\eps$ converges to $T(t,x)$ (weakly in $L^\infty((0,\infty),L^2(\RR\times\TT))$).
Since  $f^\eps$ is bounded in $L^\infty(0,\infty;L^2(\RR\times\TT)$, and in view of the expansion \eqref{eq:fexpansion}, it is enough to show that $\eps^{3/5} S^\eps$ converges to zero in some weak sense.

This follows from Proposition \ref{prop:convS}, the proof of which  uses equation \eqref{eq:asymptotic2} and some bounds from below on $a_3^\eps(p,\xi)$ and  will be detailed  in Section~\ref{sec:Seps}.
\end{proof}

 We end this section by proving that the diffusion coefficient $\kappa$ is indeed positive:
\begin{lemma}\label{lem:kappapos}
The coefficients $\kappa_1$, $\kappa_2$ and $\kappa_3$ are such that
$$ \kappa_1 - \frac{\kappa_2^2}{\kappa_3} >0.$$
\end{lemma}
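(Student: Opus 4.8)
The plan is to recognize the inequality $\kappa_1\kappa_3 > \kappa_2^2$ as a strict Cauchy--Schwarz inequality. Recall from Proposition~\ref{prop:aeps1} that, writing $c=\tfrac65$ and $A=(\pi/v_0)^{3/5}$,
$$ \kappa_1 = cA\int_0^\infty \frac{z^{3/5}}{z^2+1}\,dz, \qquad \kappa_2 = c\int_0^\infty \frac{1}{z^2+1}\,dz, \qquad \kappa_3 = cA^{-1}\int_0^\infty \frac{z^{-3/5}}{z^2+1}\,dz. $$
First I would note that all three integrals converge: near $z=0$ the exponents $3/5$, $0$, $-3/5$ of the numerators are all $>-1$, while near $z=\infty$ the integrands decay like $z^{-7/5}$, $z^{-2}$ and $z^{-13/5}$ respectively. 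When forming the product $\kappa_1\kappa_3$ the prefactors $A$ and $A^{-1}$ cancel (as does $c^2$ against $\kappa_2^2$), so that proving $\kappa_1\kappa_3>\kappa_2^2$ is equivalent to the sharp inequality
$$ \left(\int_0^\infty \frac{dz}{z^2+1}\right)^2 < \int_0^\infty \frac{z^{3/5}}{z^2+1}\,dz \;\int_0^\infty \frac{z^{-3/5}}{z^2+1}\,dz. $$

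The key step is to introduce the finite positive measure $d\mu(z)=\dfrac{dz}{1+z^2}$ on $(0,\infty)$ together with the functions $g(z)=z^{3/10}$ and $h(z)=z^{-3/10}$, which lie in $L^2(d\mu)$ by the convergence observation above. With this notation the three integrals are exactly $\int g\,h\,d\mu$, $\int g^2\,d\mu$ and $\int h^2\,d\mu$, and the target inequality is precisely the Cauchy--Schwarz inequality
$$ \left(\int_0^\infty g\,h\,d\mu\right)^2 \le \int_0^\infty g^2\,d\mu \;\int_0^\infty h^2\,d\mu. $$

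Finally I would upgrade this to a strict inequality. Equality in Cauchy--Schwarz would require $g$ and $h$ to be linearly dependent in $L^2(d\mu)$, i.e.\ $z^{3/10}=\lambda\,z^{-3/10}$ for $\mu$-almost every $z$ and some constant $\lambda$; this forces $z^{3/5}$ to be constant on $(0,\infty)$, which is absurd. Hence the inequality is strict, so $\kappa_1\kappa_3>\kappa_2^2$, and since $\kappa_3>0$ we conclude $\kappa_1-\kappa_2^2/\kappa_3>0$. There is essentially no obstacle in this argument: the only points deserving a word of care are the convergence of the three integrals at the endpoints $0$ and $\infty$, and the verification that $g$ and $h$ are not proportional, both of which are immediate. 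The one conceptual remark worth making is that the cancellation of the $v_0$-dependent factors $A$ and $A^{-1}$ in $\kappa_1\kappa_3$ is exactly what makes the Cauchy--Schwarz structure visible.
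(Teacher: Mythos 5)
Your proof is correct and follows essentially the same route as the paper, which also reduces the claim to $\left(\int_0^\infty \tfrac{dz}{1+z^2}\right)^2 < \int_0^\infty \tfrac{z^{3/5}}{1+z^2}\,dz\int_0^\infty \tfrac{z^{-3/5}}{1+z^2}\,dz$ and invokes H\"older's (Cauchy--Schwarz) inequality. Your only addition is the explicit verification of strictness via non-proportionality of $z^{3/10}$ and $z^{-3/10}$, which the paper leaves implicit.
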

\begin{proof}
Indeed, this is equivalent to 
$$ \kappa_2^2 < \kappa_1 \kappa_3$$
and 
using the explicit formula for  $\kappa_1$, $\kappa_2$ and $\kappa_3$, we see that this is equivalent to
$$ \left(\int_0^\infty \frac{1}{1+z^2}\, dz \right)^2 < \int_0^\infty \frac{z^{3/5}}{1+z^2}\, dz\, \int_0^\infty \frac{z^{-3/5}}{1+z^2}\, dz
$$
which is an immediate consequence of H\"older inequality. 
\end{proof}

\subsection{Proofs of the asymptotic results}
We recall here that $\TT$ denotes the torus $\RR/\ZZ$ and that $\omega(k)=|\sin(\pi k)|$.
Since the dispersion relation $\omega$ is degenerate at $k=0 \pm n$, it will be easier in the computation below to work with $k$ in the symmetric interval $(-\frac 1 2,\frac1 2 )$ (when working with the interval $(0,1)$, we have to deal with both endpoints $0$ and $1$).
Note that the function $\omega$ is even in that interval and that
$$ \omega'(k) = \sgn(k) \pi \cos (\pi k).$$
Finally, Proposition \ref{prop:degeneracy_multiplicative_operator} implies:
\begin{proposition}\label{eq:W}
The function $k\mapsto V(k)$ is even and non-negative on the interval $(-\frac 1 2,\frac1 2)$.
Furthermore the function  $W(k) := V(k)|k|^{-5/3}$ for $k\in(-\frac 1 2,\frac1 2)$
satisfies
$$ \lim_{k\to 0 } W(k) = w_0 := v_0 \pi^{5/3}$$
and 
$$ C_0^{-1}\leq W(k)\leq C_0$$
for some $C_0>0$.
\end{proposition}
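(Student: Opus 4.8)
The plan is to derive every assertion directly from Proposition \ref{prop:degeneracy_multiplicative_operator}, treating the passage from $|\sin(\pi k)|$ to $|k|$ near the origin as the only real content.

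First I would dispose of the elementary structural claims. Non-negativity of $V$ is immediate: it is recorded in Proposition \ref{prop:degeneracy_multiplicative_operator} that $V:\RR\to\RR_+$, and in any case it is visible from the defining formula \eqref{eq:coefV}, in which $V(k)=\om(k)^2\int\delta(\cdots)\,dk_1\,dk'$ is the integral of a non-negative measure against $\om(k)^2\geq0$. For evenness on $(-\tfrac12,\tfrac12)$, I would combine the fact that $V$ is $1$-periodic (it is a function on $\TT$) with the stated symmetry $V(1-k)=V(k)$: since $1-k\equiv-k$ modulo $1$, this gives $V(-k)=V(k)$, so $V$ is even on the symmetric interval.

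For the limit and the two-sided bounds on $W$, the key is the factorization
$$W(k)=V(k)|k|^{-5/3}=\lp V(k)\,|\sin(\pi k)|^{-5/3}\rp\cdot\lp\frac{|\sin(\pi k)|}{|k|}\rp^{5/3}.$$
The first factor is exactly the quantity controlled by Proposition \ref{prop:degeneracy_multiplicative_operator}: by \eqref{eq:V} it lies in $[c_1,c_2]$, and it converges to $v_0$ as $k\to0$. The second factor is elementary: on $(-\tfrac12,\tfrac12)$ the function $|\sin(\pi k)|/|k|$ is continuous and strictly positive, tends to $\pi$ as $k\to0$, and, because $\sin(x)/x$ is decreasing on $(0,\pi)$, it is monotone in $|k|$ with endpoint value $\sin(\pi/2)/(1/2)=2$; hence it takes values in $(2,\pi)$ and its $5/3$ power stays in $[2^{5/3},\pi^{5/3}]$.

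Passing to the limit $k\to0$ in the factorization then gives $\lim_{k\to0}W(k)=v_0\cdot\pi^{5/3}=w_0$, while multiplying the two ranges yields $W(k)\in[c_1 2^{5/3},\,c_2\pi^{5/3}]$, so one may take $C_0=\max\{c_2\pi^{5/3},(c_1 2^{5/3})^{-1}\}$. I do not expect any substantial obstacle here; the only point requiring care is to confirm that $|\sin(\pi k)|/|k|$ stays bounded \emph{away from zero} on the half-period, so that the lower bound $C_0^{-1}>0$ is genuine, and this reduces precisely to the monotonicity of $\sin(x)/x$ and the endpoint value $2$ noted above.
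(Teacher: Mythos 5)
Your proposal is correct and matches the paper's intent exactly: the paper gives no written proof, simply asserting that the statement follows from Proposition \ref{prop:degeneracy_multiplicative_operator}, and your argument is precisely the routine filling-in of that implication (the factorization through $\bigl(|\sin(\pi k)|/|k|\bigr)^{5/3}$ together with the monotonicity of $\sin(x)/x$ on $(0,\pi)$ and the endpoint value $2$). Nothing is missing.
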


\begin{proof}[Proof of Proposition \ref{prop:initial}]
The first part of the proposition follows immediately from Lebesgue dominated convergence theorem, since
$$ \left| \frac{ V(k) }{\eps^{\frac 8 5} p+V(k)+ i\eps \omega'(k) \xi } \right|  = \frac{ V(k) }{\sqrt{ \left(\eps^{\frac 8 5} p+V(k)\right)^2+ (\eps \omega'(k) \xi)^2} } \leq 1$$
and
$$\frac{ V(k) }{\eps^{\frac 8 5} p+V(k)+ i\eps \omega'(k) \xi }  \tto 1 \quad\mbox{ as } \eps\to0.$$

For the second part, we note that 
$$ \left| \frac{ V(k) }{\eps^{\frac 8 5} p+V(k)+ i\eps \omega'(k) \xi } \right|  \leq \frac{ V(k) }{\eps^{\frac 8 5} p+V(k)} $$
and so 
\begin{align} \nonumber
|\mathcal F_2^\eps(\widehat f^0)|(\xi,p)  &  \leq   \eps^{\frac{3}{5}} \int_{\TT}  \frac{ V(k) }{\eps^{\frac 8 5} p+V(k)} \frac{\widehat{f_0}(\xi,k)}{\omega(k)} \, dk\\ \nonumber
&  \leq  C  \eps^{\frac{3}{5}} ||\widehat{f_0}(\xi,\cdot)||_{L^\infty(\TT)}  \int_{0}^{1/2}  \frac{ |k|^{2/3} }{\eps^{\frac 8 5} p+|k|^{5/3}}  \, dk\\
& \leq C \eps^{\frac{3}{5}} ||\widehat{f_0}(\xi,\cdot)||_{L^\infty(\TT)}  (1+|\ln(\eps^{\frac 8 5} p)|) \label{eq:estimateF2}
\end{align}
hence the result, since this last inequality implies (integrating with respect to $\xi$ and $p$)
$$|| \mathcal F_2^\eps(\widehat f^0) ||_{L^1((0,P)\times\RR)} \leq  C \eps^{\frac{3}{5}} ||f_0||_{L^\infty(\RR\times \TT)} P  (1+ |\ln(\eps^{\frac 8 5} P )|).
$$
\end{proof}

\begin{proof}[Proof of Proposition \ref{prop:aeps1}]
First, we write
\begin{align}
 1-\frac{ V(k) }{\eps^{\frac{8}{5}} p+V(k)+ i\eps \omega'(k) \xi }  = &  \frac{ \eps^{\frac{8}{5}} p + i\eps \omega'(k) \xi }{\eps^{\frac{8}{5}} p+V(k)+ i\eps \omega'(k) \xi } \nonumber \\
=&    \frac{\eps^{\frac{8}{5}} p+V(k)  }{(\eps^{\frac{8}{5}} p+V(k))^2+(\eps \omega'(k) \xi)^2 } \eps^{\frac{8}{5}} p \nonumber  \\
& + \frac{  V  i\eps \omega'(k) \xi}{(\eps^{\frac{8}{5}} p+V(k))^2+(\eps \omega'(k) \xi)^2 } \nonumber  \\
&  +  \frac{ (\eps \omega'(k) \xi)^2 }{(\eps^{\frac{8}{5}} p+V(k))^2+(\eps \omega'(k) \xi)^2 }\label{eq:1-V}
\end{align}
Using the fact that $V(k)=V(-k)$, $\omega'(-k)=-\omega'(k)$, we deduce that
\begin{align*}
a_1^\eps(p,\xi):= & 
 -p \int_{-\frac 1 2}^{\frac 1 2}   \frac{\eps^{\frac{8}{5}} p+V(k)  }{(\eps^{\frac{8}{5}} p+V(k))^2+(\eps \omega'(k) \xi)^2 } V(k)\, dk\,  \\
&  - \eps^{-\frac{8}{5}} \int_{-\frac 1 2}^{\frac 1 2}  \frac{ (\eps \omega'(k) \xi)^2 }{(\eps^{\frac{8}{5}} p+V(k))^2+(\eps \omega'(k) \xi)^2 } V(k)\, dk .
\end{align*}
Dominated convergence immediately implies that the first term converges to $-p$, so we only have to consider the term
$$ d^\eps(p,\xi)=\eps^{-\frac{8}{5}} \int_{-\frac 1 2}^{\frac 1 2}   \frac{ (\eps \omega'(k) \xi)^2 }{(\eps^{\frac{8}{5}} p+V(k))^2+(\eps \omega'(k) \xi)^2 } V(k)\, dk .
$$
For some $\delta\in(0,\frac 1 2)$, we write
$$ d^\eps(p,\xi)= d^\eps_1(p,\xi)+d^\eps_2(p,\xi)$$
where
\begin{align*} 
d_1^\eps(p,\xi)& =  \eps^{-\frac{8}{5}} \int_{\tiny \begin{array}{l} k\in (-\frac 1 2,\frac 1 2)\\ |k|\geq \delta\end{array}}   \frac{ (\eps \omega'(k) \xi)^2 }{(\eps^{\frac{8}{5}} p+V(k))^2+(\eps \omega'(k) \xi)^2 } V(k)\, dk  \\
& \leq   C \eps^{-\frac{8}{5}} \int_{\tiny \begin{array}{l} k\in (-\frac 1 2,\frac 1 2)\\ |k|\geq \delta\end{array}}   \frac{ (\eps   \xi)^2 }{V(k)}\, dk \\ 
& \leq  C(\delta) |\xi|^2 \eps^{\frac{2}{5}}  
\end{align*}
and 
\begin{align*}
d_2^\eps(p,\xi) &  =  \eps^{-\frac{8}{5}} \int_{|k|\leq \delta}  
 \frac{ (\eps \omega'(k) \xi)^2 }{(\eps^{\frac{8}{5}} p+V(k))^2+(\eps \omega'(k) \xi)^2 } V(k)\, dk  \\
& = 2   \eps^{-\frac{8}{5}} \int_{0}^\delta  
 \frac{ (\eps\pi  \cos(\pi k) \xi)^2 }{(\eps^{\frac{8}{5}} p+W(k)|k|^{5/3})^2+(\eps\pi \cos(\pi k) \xi)^2 } W(k)|k|^{5/3}\, dk \\ 
 & = 2   \eps^{-\frac{8}{5}} \int_{0}^\delta  
 \frac{ (\pi  \cos(\pi k))^2 }{(\eps^{\frac{3}{5}}\frac{ p}{|\xi|}+W(k)\frac{|k|^{5/3}}{\eps|\xi|})^2+(\pi \cos(\pi k))^2 } W(k)|k|^{5/3}\, dk .
\end{align*}
We now do the change of variable
\begin{equation}\label{eq:cv}
 w=\frac{|k|^{5/3}}{\eps|\xi|}, \quad dk = \frac{3}{5} (\eps| \xi|)^{3/5}w^{-2/5} dw,
 \end{equation}
which yields
\begin{align*}
d_2^\eps(p,\xi) & = 2   \eps^{-\frac{8}{5}} \int_{0}^{\frac{\delta^{5/3}}{\eps|\xi|}}  
 \frac{ z^\eps(w) }{\left(\eps^{\frac{3}{5}}\frac{ p}{|\xi|}+W^\eps(w)w \right)^2+z^\eps(w) } W^\eps(w)\eps|\xi| w \frac{3}{5} (\eps |\xi|)^{3/5}w^{-2/5} \, dw \\
 & = |\xi|^{8/5}  \frac{6}{5}    \int_{0}^{\frac{\delta^{5/3}}{\eps|\xi|}}  
 \frac{ z^\eps(w) }{\left(\eps^{\frac{3}{5}}\frac{ p}{|\xi|}+W^\eps(w)w \right)^2+z^\eps(w) } W^\eps(w) w^{3/5} \, dw
\end{align*}
where
\begin{align*}
z^\eps(w) & =(\pi \cos(\pi (\eps |\xi| w)^{3/5}))^2 \\
W^\eps(w) & = W((\eps |\xi| w)^{3/5}).
\end{align*}
In particular, the integrand converges pointwise (for all $w$ and $\xi$), as $\eps$ goes to zero, to
$$ 
\frac{ \pi^2 }{\left(w_0 w \right)^2+\pi^2 } w_0 w^{3/5} $$
and it is bounded by
$$ 
\frac{ \pi^2 }{\left( C_0^{-1}w \right)^2+(\pi \cos(\pi\delta))^2} C_0 w^{3/5}.
$$
We deduce that 
$$ 
|d_2^\eps(p,\xi)| \leq C |\xi|^{8/5}$$
for some constant $C$ and that
$$ d^\eps_2(p,\xi) \longrightarrow |\xi|^{8/5}  \frac{6}{5}    \int_{0}^{\infty}   
\frac{ \pi^2 }{\left(w_0 w \right)^2+\pi^2 } w_0 w^{3/5}  \, dw = \kappa_1 |\xi|^{\frac 8 5}$$
(recall that $w_0=v_0\pi^{5/3}$) which concludes the proof of the first part.
Note that we have also proved that
$$ |a^\eps_1(p,\xi)| \leq p+C \eps^{\frac{2}{5}}  |\xi|^2 + C |\xi|^{8/5}.$$
In particular, $a^\eps_1(p,\xi)$ is bounded in $L^\infty_{loc}((0,\infty)\times\RR)$. Since it converges pointwise, a classical argument shows that it also converges strongly in $L^p_{loc}((0,\infty)\times\RR)$ for all $0<p<\infty$.

\medskip

The convergence of $a_2^\eps$  is proved similarly:
Using \eqref{eq:1-V}, we find
\begin{align*}
a_2^\eps(p,\xi):= & 
 -\eps^{\frac 3 5 }p \int_{-\frac 1 2}^{\frac 1 2}   \frac{\eps^{\frac{8}{5}} p+V(k)  }{(\eps^{\frac{8}{5}} p+V(k))^2+(\eps \omega'(k) \xi)^2 } \frac{V(k)}{\omega(k)}\, dk\,  \\
&  - \eps^{-1} \int_{-\frac 1 2}^{\frac 1 2}  \frac{ (\eps \omega'(k) \xi)^2 }{(\eps^{\frac{8}{5}} p+V(k))^2+(\eps \omega'(k) \xi)^2 } \frac{V(k)}{\omega(k)}\, dk .
\end{align*}
The first term is bounded by
\begin{align*}
& \eps^{\frac 3 5 }p \int_{-\frac 1 2}^{\frac 1 2}   \frac{ 1 }{\eps^{\frac{8}{5}} p+V(k)} \frac{V(k)}{\omega(k)}\, dk \\
& \qquad \leq C  \eps^{\frac 3 5 }p \int_{0}^{\frac 1 2}   \frac{ 1 }{\eps^{\frac{8}{5}} p+C_0^{-1} k^{\frac 5 3}} k^{\frac2 3}\, dk \\
& \qquad \leq C  \eps^{-1} \int_{0}^{\frac 1 2}   \frac{ 1 }{1+ \eps^{-\frac{8}{5}} p^{-1} k^{\frac 5 3}} k^{\frac2 3}\, dk \\
& \qquad \leq C  \eps^{-1} p \, \eps^{\frac{8}{5}}\int_{0}^{C\eps^{-\frac 8 5} p^{-1}}   \frac{ 1 }{1+ w} dw\\ 
& \qquad \leq C   p\,  \eps^{\frac{3}{5}}\ln(1+C\eps^{-\frac 8 5} p^{-1})  
\end{align*}
and thus converges to zero as $\eps\to 0$ (here we used the change of variable $w= \eps^{-\frac{8}{5}} p^{-1} k^{\frac 5 3}$).
For the second term the same decomposition of the integral in the interval $|k|\leq \delta$ and $|k|\geq \delta$. The integral in $|k|\geq \delta$ is bounded by $C(\delta) \eps |\xi|^2$. For the integral in  $|k|\leq \delta$, the change of variable \eqref{eq:cv} gives that it is bounded by $C|\xi|$ and converges to
$$  |\xi| \frac{6}{5}    \int_{0}^{\infty}   
\frac{ \pi^2 }{\left(w_0 w \right)^2+\pi^2 } \frac{w_0}{\pi}  \, dw = \kappa_2 |\xi|.$$
Note that
\begin{equation} \label{eq:estimatea2}
|a^\eps_2(p,\xi)|\leq C   p\,  \eps^{\frac{3}{5}}\ln(1+C\eps^{-\frac 8 5} p^{-1})  + C(\delta) \eps|\xi|^2 + C|\xi|
\end{equation}
so $a^\eps_2\in L^\infty_{loc}((0,\infty)\times \RR)$ implying, next to the pointwise convergence, the $L^p_{loc}((0,\infty)\times \RR)$ strong convergence for $0<p<\infty$.
\medskip

Finally,
using \eqref{eq:1-V}, we find
\begin{align}
a_3^\eps(p,\xi):= & 
 -\eps^{\frac 6 5 }p \int_{-\frac 1 2}^{\frac 1 2}   \frac{\eps^{\frac{8}{5}} p+V(k)  }{(\eps^{\frac{8}{5}} p+V(k))^2+(\eps \omega'(k) \xi)^2 } \frac{V(k)}{\omega(k)^2}\, dk\,  \nonumber \\
&  - \eps^{-\frac 2 5} \int_{-\frac 1 2}^{\frac 1 2}  \frac{ (\eps \omega'(k) \xi)^2 }{(\eps^{\frac{8}{5}} p+V(k))^2+(\eps \omega'(k) \xi)^2 } \frac{V(k)}{\omega(k)^2}\, dk .\label{eq:a3eps2}
\end{align}
The first term is bounded by
\begin{align*}
& \eps^{\frac 6 5 }p \int_{-\frac 1 2}^{\frac 1 2}   \frac{ 1 }{\eps^{\frac{8}{5}} p+V(k)} \frac{V(k)}{(\omega(k))^2}\, dk \\
& \qquad \leq C  \eps^{\frac 6 5 }p \int_{0}^{\frac 1 2}   \frac{ 1 }{\eps^{\frac{8}{5}} p+C_0^{-1} k^{\frac 5 3}} k^{-\frac1 3}\, dk \\
& \qquad \leq C  \eps^{-\frac 2 5} \int_{0}^{\frac 1 2}   \frac{ 1 }{1+ \eps^{-\frac{8}{5}} p^{-1} k^{\frac 5 3}} k^{-\frac1 3}\, dk \\
& \qquad \leq C  \eps^{\frac {24} {25} } p^{\frac 4 5} \, \int_{0}^{C\eps^{-\frac 8 5} p^{-1}}   \frac{ w^{-\frac 3 5} }{1+ w} dw\\ 
& \qquad \leq C  \eps^{\frac {24} {25} } p^{\frac 4 5}\, \int_{0}^{\infty}   \frac{ w^{-\frac 3 5} }{1+ w} dw
\end{align*}
and thus converges to zero as $\eps\to 0$.
For the second term, 
the same decomposition of the integral in the interval $|k|\leq \delta$ and $|k|\geq \delta$. The integral in $|k|\geq \delta$ is bounded by $C(\delta)\eps^{8/5}|\xi|$. The integral in $|k|\leq \delta$, the change of variable \eqref{eq:cv} gives that it is bounded by $C|\xi|^{2/5}$ and converges to
$$  |\xi|^{\frac{2}{5}} \frac{6}{5}    \int_{0}^{\infty}   
\frac{ \pi^2 }{\left(w_0 w \right)^2+\pi^2 } \frac{w_0}{\pi^2} w^{-\frac 3 5}  \, dw = \kappa_3 |\xi|^{\frac{2}{5}}.$$
Analogously as in the previous cases, we have that 
$$|a^\eps_3(p,\xi)|\leq C  \eps^{\frac {24} {25} } p^{\frac 4 5} +C(\delta) \eps^{8/5}|\xi|+ C|\xi|^{2/5}$$
so $a^\eps_3\in L^\infty_{loc}((0,\infty)\times \RR)$ which, next to the pointwise convergence, implies the $L_{loc}^p((0,\infty)\times \RR)$ strong convergence $p\in(0,\infty)$.

\end{proof}

It only remain to prove Proposition \ref{prop:remainder}. 
For that we will require the following lemma:
\begin{lemma}\label{lem:remainder}
For all $\eta\in(0,\frac 1 3]$, we have
\begin{align}
& \int_{\TT} \left| \frac{ V(k) }{\eps^{\frac{8}{5}} p+V(k)+ i\eps \omega'(k) \xi } -1\right|^2 V(k)(\sin(\pi k))^{\frac 1 3-\eta} \, dk\nonumber\\
& \qquad\qquad\qquad \qquad\qquad\qquad \leq C\left[ (\eps^\frac{8}{5} p)^{\frac 8 5} +  (\eps |\xi|)^{\frac 9 5 - \frac {3\eta}{5}}+ (\eps |\xi|)^2 \right]
\label{eq:R111}
\end{align}
and 
\begin{align}
& \int_{\TT} \left| \frac{ V(k) }{\eps^{\frac{8}{5}} p+V(k)+ i\eps \omega'(k) \xi } -1\right|^2 \frac{V(k)}{\omega(k)^2}(\sin(\pi k))^{\frac 1 3-\eta} \, dk\nonumber \\
&\qquad\qquad\qquad \qquad\qquad\qquad  \leq
C \left[(\eps^{\frac{8}{5}}p)^{\frac{3}{5}(1-\eta)} +  (\eps\xi)^{\frac{3}{5}(1-\eta)}+(\eps|\xi|)^2\right]
\label{eq:R222}
\end{align}
\end{lemma}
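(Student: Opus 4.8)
The plan is to reduce both inequalities to one–dimensional, scale–invariant integrals. First I would start from the algebraic identity \eqref{eq:1-V}, which upon taking the modulus squared gives
$$\left|\frac{V(k)}{\eps^{\frac85}p+V(k)+i\eps\omega'(k)\xi}-1\right|^2 = \frac{(\eps^{\frac85}p)^2+(\eps\omega'(k)\xi)^2}{(\eps^{\frac85}p+V(k))^2+(\eps\omega'(k)\xi)^2}.$$
Splitting the numerator, I bound this by the sum of a ``$p$-part'' $\frac{(\eps^{\frac85}p)^2}{(\eps^{\frac85}p+V(k))^2}$ (discard $(\eps\omega'\xi)^2$ in the denominator) and a ``$\xi$-part'' $\frac{(\eps\omega'(k)\xi)^2}{V(k)^2+(\eps\omega'(k)\xi)^2}$ (discard $\eps^{\frac85}p$ from $(\eps^{\frac85}p+V)^2\ge V^2$). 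Each of the two integrals in the lemma then splits into its $p$-part and its $\xi$-part, and each of these is split over the near-origin region $\{|k|\le\delta\}$ and the bounded region $\{\delta\le|k|\le\frac12\}$.

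On the bounded region $V$ is bounded below and $\omega,\omega'$ are smooth, so the four contributions are trivially $\le C(\delta)(\eps^{\frac85}p)^2$ or $C(\delta)(\eps|\xi|)^2$. Since $\frac85\le 2$ and $\frac35(1-\eta)\le 2$, the $p$-contributions are absorbed into $(\eps^{\frac85}p)^{8/5}$ (resp. $(\eps^{\frac85}p)^{\frac35(1-\eta)}$) once $\eps^{\frac85}p\le1$, the complementary range being covered by the crude bound $|\,\cdot\,|\le 2$ on the integrand; the $\xi$-contributions are the $(\eps|\xi|)^2$ terms already present on the right-hand side. The heart of the matter is the near-origin region, where I use Proposition \ref{eq:W} to write $V(k)=W(k)|k|^{5/3}$ with $W$ bounded above and below, together with $\sin(\pi k)\sim|k|$ and $\omega'(k)=\sgn(k)\pi\cos(\pi k)\to\pm\pi$. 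There the weight $V(\sin\pi k)^{1/3-\eta}$ of \eqref{eq:R111} behaves like $|k|^{2-\eta}$, while the weight $\frac{V}{\omega^2}(\sin\pi k)^{1/3-\eta}$ of \eqref{eq:R222} behaves like $|k|^{-\eta}$.

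For the $p$-part I would use the substitution $u=|k|^{5/3}/(\eps^{\frac85}p)$ (the analogue of \eqref{eq:cv} with $\eps|\xi|$ replaced by $\eps^{\frac85}p$), and for the $\xi$-part the substitution $w=|k|^{5/3}/(\eps|\xi|)$ of \eqref{eq:cv}. In both cases the denominator collapses, up to the bounded factors $W$ and $\cos^2(\pi k)$, to a multiple of $(\eps^{\frac85}p)^2(1+u)^2$, resp. $(\eps|\xi|)^2(w^2+1)$, and the remaining $k$-dependence becomes a pure power of the scaling variable. A bookkeeping of exponents then yields, for the $\xi$-parts, the factors $(\eps|\xi|)^{\frac95-\frac{3\eta}5}$ for \eqref{eq:R111} and $(\eps|\xi|)^{\frac35(1-\eta)}$ for \eqref{eq:R222}, times the dimensionless integrals $\int_0^\infty \frac{w^{(4-3\eta)/5}}{w^2+1}\,dw$ and $\int_0^\infty \frac{w^{-(3\eta+2)/5}}{w^2+1}\,dw$; likewise the $p$-parts yield $(\eps^{\frac85}p)^{\frac{9-3\eta}{5}}$ and $(\eps^{\frac85}p)^{\frac35(1-\eta)}$ times integrals of the form $\int_0^\infty \frac{u^{\cdots}}{(1+u)^2}\,du$. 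One checks that for $\eta\in(0,\frac13]$ all four integrals converge at both endpoints (integrability at $w=0$ in the second estimate being precisely what $\eta<1$ secures). Finally, since $\frac{9-3\eta}{5}\ge\frac85$ for $\eta\le\frac13$, the genuine $p$-rate $(\eps^{\frac85}p)^{\frac{9-3\eta}5}$ in \eqref{eq:R111} is dominated by $(\eps^{\frac85}p)^{8/5}$ when $\eps^{\frac85}p\le1$, which is the recorded form.

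The main obstacle I anticipate is purely the exponent bookkeeping: one must track how the three scales $\eps^{\frac85}p$, $V(k)\sim|k|^{5/3}$ and $\eps\omega'\xi\sim\eps|\xi|$ interact after the two distinct changes of variable, and verify finiteness of the resulting one-dimensional integrals over the whole range $\eta\in(0,\frac13]$ rather than only at the endpoint, together with the mild nuisance of reconciling the true decay $(\eps^{\frac85}p)^{(9-3\eta)/5}$ of the $p$-term with the weaker stated rate $(\eps^{\frac85}p)^{8/5}$. No analytic input beyond Proposition \ref{eq:W} and the substitution \eqref{eq:cv} is required.
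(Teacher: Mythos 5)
Your proposal is correct and follows essentially the same route as the paper's proof: the same modulus-squared identity from \eqref{eq:1-V}, the same split into a $p$-part and a $\xi$-part with the denominator relaxed to $(\eps^{8/5}p+V)^2$ resp. $V^2+(\eps\omega'\xi)^2$, and the same changes of variables $w=|k|^{5/3}/(\eps^{8/5}p)$ and $w=|k|^{5/3}/(\eps|\xi|)$, yielding identical exponents and the same convergent one-dimensional integrals. The only cosmetic difference is that the paper simply discards the factor $(\sin\pi k)^{1/3-\eta}$ in the $p$-part of \eqref{eq:R111}, obtaining the rate $(\eps^{8/5}p)^{8/5}$ directly and without any restriction on the size of $\eps^{8/5}p$, whereas you carry the factor along and then dominate the resulting $(\eps^{8/5}p)^{(9-3\eta)/5}$ by $(\eps^{8/5}p)^{8/5}$, handling the regime $\eps^{8/5}p>1$ by the crude bound on the integrand.
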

We note that when $\eta = \frac 1 3$ (that is when we do not have the term $(\sin(\pi k))^{\frac 1 3-\eta}$ in the integral), then the integral behaves like $\eps^{\frac 8 5}$. As we will see below, this would be just enough to show that the remainder term $R_1^\eps$ is bounded, but not to show that it converges to zero. The improvement of the norm of $K$ given by \eqref{eq:bdK} is thus essential here.

We first prove Proposition \ref{prop:remainder} (using Lemma \ref{lem:remainder}), before giving the proof of Lemma \ref{lem:remainder}:

\begin{proof}[Proof of Proposition \ref{prop:remainder}]
Using \eqref{eq:bdK}, we get:
\begin{align*}
\left| R_1^\eps(p,\xi) \right| &=  \eps^{-\frac{8}{5}} \left| \int_{\TT} \left( \frac{ V(k) }{\eps^{\frac{8}{5}} p+V(k)+ i\eps \omega'(k) \xi } -1\right) K(\hfe-\Pi(\hfe))(k)\, dk \right|\\
& \leq   \eps^{-\frac{8}{5}} \left(\int_{\TT} \left| \frac{ V(k) }{\eps^{\frac{8}{5}} p+V(k)+ i\eps \omega'(k) \xi } -1\right|^2 V(k)(\sin(\pi k))^{\frac 1 3-\eta}\, dk\right)^{1/2} \\
&\qquad \qquad \times \left( \int_\TT K(\hfe-\Pi(\hfe))^2 (\sin(\pi k))^{-\frac 1 3+\eta} V^{-1}(k)\, dk\right)^{1/2}\\
& \leq   C\eps^{-\frac{8}{5}} \left(\int_{\TT} \left| \frac{ V(k) }{\eps^{\frac{8}{5}} p+V(k)+ i\eps \omega'(k) \xi } -1\right|^2 V(k)(\sin(\pi k))^{\frac 1 3-\eta} \, dk\right)^{1/2} \\
& \qquad\qquad \times \left( \int_\TT (\hfe-\Pi(\hfe))^2 (\sin(\pi k))^{\frac 1 3-\eta} V(k)\, dk\right)^{1/2}\\
\end{align*}
and using \eqref{eq:R111}, we deduce that for $p<P$ and $ |\xi|\leq K$, we have
\begin{align*}
\left| R_1^\eps(p,\xi) \right|  \leq  C(P,K) \eps^{-\frac{8}{5}} \eps^{\frac {9-3\eta} {10} } \left( \int_\TT (\hfe-\Pi(\hfe))^2 V(k)\, dk\right)^{1/2}.
\end{align*}
For all $0<a<P$ and $K>0$, we deduce
\begin{align*}
\int_a^P\int_{-K}^K \left| R_1^\eps(p,\xi) \right|^2 \, d\xi \, dp & \leq C(P,K) \eps^{-\frac{16}{5}} \eps^{\frac {9-3\eta} {5} } \|  \hfe-\Pi(\hfe)\|^2_{L^\infty((a,\infty);L^2_V(\TT\times\RR))}\\
& \leq C(P,K)\frac{1}{ 2a}   \eps^{\frac {-7-3\eta} {5} }  \| f^\eps-\Pi(f^\eps)\|^2_{L^2((0,\infty);L^2_V(\TT\times\RR))} \\
& \leq C(a,P,K)   \eps^{\frac {-7-3\eta} {5} }  \eps^{\frac 8 5}\\
& \leq C(a,P,K)   \eps^{\frac {1-3\eta} {5} } 
\end{align*}
where we have used  \eqref{eq:laplaceL^2}.

Clearly, this implies Proposition \ref{prop:remainder} for $i=1$.

\medskip

Proceeding similarly, we have that
\begin{align}
|R^\eps_2(p, \xi)| & \leq C\eps^{-1} \lp  (\eps^{\frac{8}{5}}p)^{\frac{3}{5}(1-\eta)} +  (\eps\xi)^{\frac{3}{5}(1-\eta)}+(\eps|\xi|)^2\rp^{1/2} \nonumber\\
& \qquad\qquad\qquad\qquad   \times \lp \int_\TT (\widehat f^\eps - \Pi (\widehat f^\eps) )^2 V(k)dk\rp^{1/2}
 \label{eq:estimate_R2}
\end{align}
and  therefore
\begin{align} \nonumber
\int_a^P\int_{-K}^K \left| R_2^\eps(p,\xi) \right| ^2  \, dp\,d\xi & \leq C(P,K) \eps^{-2}\eps^{\frac{3(1-\eta)}{5}}  \|  (\hfe-\Pi(\hfe))\|^2_{L^\infty((a,\infty);L^2_V(\TT\times\RR))}\\ \nonumber
& \leq C(a,P,K)   \eps^{\frac {-7-3\eta} {5} }  \| f^\eps-\Pi(f^\eps)\|^2_{L^2((0,\infty);L^2_V(\TT\times\RR))} \\ \label{eq:estimateR2}
& \leq C(a,P,K)   \eps^{\frac {1-3\eta} {5} } 
\end{align}
which converges to zero for any $\eta\in(0,\frac{1}{3})$.
\end{proof}

\begin{proof}[Proof of Lemma \ref{lem:remainder}]
We write:
\begin{align*}
& \int_{\TT} \left| \frac{ V(k) }{\eps^{\frac{8}{5}} p+V(k)+ i\eps \omega'(k) \xi } -1\right|^2 V(k)(\sin(\pi k))^{\frac 1 3-\eta} \, dk \\
& \qquad = \int_{\TT} \left| \frac{ \eps^{\frac{8}{5}} p+ i\eps \omega'(k) \xi  }{\eps^{\frac{8}{5}} p+V(k)+ i\eps \omega'(k) \xi }\right|^2 V(k)(\sin(\pi k))^{\frac 1 3-\eta} \, dk \\
& \qquad = \int_{\TT} \frac{ (\eps^{\frac{8}{5}} p)^2+ (\eps \omega'(k) \xi)^2  }{(\eps^{\frac{8}{5}} p+V(k))^2+ (\eps \omega'(k) \xi)^2 } V(k)(\sin(\pi k))^{\frac 1 3-\eta} \, dk \\
& \qquad = I_1 + I_2 
\end{align*}
where
\begin{align*}
I_1 & := \int_{\TT} \frac{ (\eps^{\frac{8}{5}} p)^2 }{(\eps^{\frac{8}{5}} p+V(k))^2+ (\eps \omega'(k) \xi)^2 } V(k)(\sin(\pi k))^{\frac 1 3-\eta} \, dk \\
& \leq2 \int_{0}^{1/2} \frac{ (\eps^{\frac{8}{5}} p)^2 }{(\eps^{\frac{8}{5}} p+V(k))^2 } V(k)(\sin(\pi k))^{\frac 1 3-\eta} \, dk\\
& \leq2 \int_{0}^{1/2} \frac{ (\eps^{\frac{8}{5}} p)^2 }{(\eps^{\frac{8}{5}} p+k^{5/3})^2 } k^{5/3} \, dk
\end{align*}
(note we do not need to use the $(\sin(\pi k))^{\frac 1 3-\eta} $ to control this term) and 
\begin{align*}
I_2 & := \int_{\TT} \frac{ (\eps \omega'(k) \xi)^2  }{(\eps^{\frac{8}{5}} p+V(k))^2+ (\eps \omega'(k) \xi)^2 } V(k)(\sin(\pi k))^{\frac 1 3-\eta} \, dk \\
& \leq  2\int_{0}^{1/4} \frac{ (\eps \omega'(k) \xi)^2  }{V(k)^2+ (\eps \omega'(k) \xi)^2 } V(k)\,(\sin(\pi k))^{\frac 1 3-\eta}\, dk + 2 \int_{1/4}^{1/2} \frac{ (\eps\pi \xi)^2  }{V(k) }  \, dk\\
& \leq  C\int_{0}^{1/4} \frac{ (\eps  \xi)^2  }{k^{10/3}+ (\eps   \xi)^2 } k^{2-\eta} \, dk + C\eps^2 |\xi|^2
\end{align*}
(here the $(\sin(\pi k))^{\frac 1 3-\eta} $ is essential).

Using the change of variable $w= \frac{k^{5/3}}{\eps^{8/5}p}$ in $I_1$ and $w=\frac{k^{5/3}}{\eps \xi} $ in $I_2$, we find
\begin{align*}
I_1 & \leq C (\eps^{8/5} p )^{8/5} \int_0^\infty \frac{w^{3/5}}{(1+w)^2}\, dw\\
I_2 & \leq C (\eps\xi)^{9/5-3\eta/5}\int_0^{\infty}\frac{w^{4/5-3\eta/5}}{1+w^2}\, dw+ C\eps^2 |\xi|^2\\
\end{align*}
where the integral in the right hand side are clearly finite (recall that $\eta \in (0,\frac 1 3)$.
Inequality \eqref{eq:R111} follows.
\medskip

We now proceed similarly to prove \eqref{eq:R222}: 
First, we write
\begin{align*}
& \int_{\TT} \left| \frac{ V(k) }{\eps^{\frac{8}{5}} p+V(k)+ i\eps \omega'(k) \xi } -1\right|^2 \frac{V(k)}{\omega(k)^2}(\sin(\pi k))^{\frac 1 3-\eta} \, dk\\
& \qquad = \int_{\TT} \frac{ (\eps^{\frac{8}{5}} p)^2+ (\eps \omega'(k) \xi)^2  }{(\eps^{\frac{8}{5}} p+V(k))^2+ (\eps \omega'(k) \xi)^2 } \frac{V(k)}{\omega(k)^2} (\sin(\pi k))^{\frac 1 3-\eta} \, dk \\
& \qquad = \tilde I_1 + \tilde I_2 
\end{align*}
where
\begin{align*}
\tilde I_1 & := \int_{\TT} \frac{ (\eps^{\frac{8}{5}} p)^2 }{(\eps^{\frac{8}{5}} p+V(k))^2+ (\eps \omega'(k) \xi)^2 } \frac{V(k)}{\omega(k)^2} (\sin(\pi k))^{\frac 1 3-\eta} \, dk \\
& \leq2 \int_{0}^{1/2} \frac{ (\eps^{\frac{8}{5}} p)^2 }{(\eps^{\frac{8}{5}} p+V(k))^2 } \frac{V(k)}{\omega(k)^2}\,(\sin(\pi k))^{\frac 1 3-\eta}  dk\\
& \leq2 \int_{0}^{1/2} \frac{ (\eps^{\frac{8}{5}} p)^2 }{(\eps^{\frac{8}{5}} p+k^{5/3})^2 }  k^{-\eta} \, dk
\end{align*}
and 
\begin{align*}
\tilde I_2 & := \int_{\TT} \frac{ (\eps \omega'(k) \xi)^2  }{(\eps^{\frac{8}{5}} p+V(k))^2+ (\eps \omega'(k) \xi)^2 } \frac{V(k)}{\omega(k)^2} (\sin(\pi k))^{\frac 1 3-\eta} \, dk \\
& \leq  2\int_{0}^{1/2} \frac{ (\eps \omega'(k) \xi)^2  }{V(k)^2+ (\eps \omega'(k) \xi)^2 } \frac{V(k)}{\omega(k)^2} (\sin(\pi k))^{\frac 1 3-\eta} \, dk\\
& \leq  2\int_{0}^{1/4} \frac{ (\eps  \xi)^2  }{k^{10/3}+ (\eps   \xi)^2 } k^{-\eta} \, dk + C(\eps|\xi|)^2
\end{align*}
Using the change of variable $w= \frac{k^{5/3}}{\eps^{8/5}p}$ in $\tilde I_1$ and $w=\frac{k^{5/3}}{\eps \xi} $ in $\tilde I_2$, we find
\begin{align*}
\tilde I_1 & \leq C (\eps^{\frac{8}{5}}p)^{\frac{3}{5}(1-\eta)}  \int_0^{\infty}\frac{w^{-3/5\eta}}{(1+w)^2}\, dw  \\
\tilde I_2 & \leq C (\eps\xi)^{\frac{3}{5}(1-\eta)}\int_0^{\infty}\frac{w^{-2/5-3\eta/5}}{1+w^2}\, dw + C(\eps|\xi|)^2\\
\end{align*}
which yields \eqref{eq:R222}.
\end{proof}

\section{Proof of Proposition \ref{prop:convS}}\label{sec:Seps}
The proof of Proposition \ref{prop:convS} relies 
on the following crucial bound:
\begin{lemma}\label{lem:a_3}
There exists a constant $c$ such that for all $K$ and for all  $\eps$ such that $\eps K\leq 1$, 
the following lower bound holds
\begin{equation} \label{eq:lowerestimatefora3}
|a^\eps_3(p,\xi)|\geq  c \, \eps^{\frac{6}{25}} p^\frac{2}{5} + c |\xi|^{\frac{2}{5}} \qquad\mbox{ for } 0\leq p\leq K, \quad |\xi| \leq K.
\end{equation}
\end{lemma}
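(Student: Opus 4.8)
The plan is to exploit the fact that, by \eqref{eq:a3eps2}, $a_3^\eps$ is real and non-positive: after the odd (imaginary) part of the integrand has cancelled, both integrals defining $a_3^\eps$ carry a minus sign in front of a non-negative integrand. Hence, writing $V=V(k)$, $\omega=\omega(k)$, $\omega'=\omega'(k)$,
$$|a_3^\eps(p,\xi)| = T_1+T_2,\qquad T_1 := \eps^{\frac 6 5}p\int_{-\frac12}^{\frac12}\frac{\eps^{\frac 85}p+V}{(\eps^{\frac 85}p+V)^2+(\eps\omega'\xi)^2}\,\frac{V}{\omega^2}\,dk,$$
$$T_2 := \eps^{-\frac 25}\int_{-\frac12}^{\frac12}\frac{(\eps\omega'\xi)^2}{(\eps^{\frac 85}p+V)^2+(\eps\omega'\xi)^2}\,\frac{V}{\omega^2}\,dk,$$
with $T_1,T_2\ge0$. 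I expect $T_1$ to produce the term $\eps^{6/25}p^{2/5}$ (it is precisely the size of $|a_3^\eps(p,0)|$) and $T_2$ to produce the term $|\xi|^{2/5}$ (this is the integral that gave $\kappa_3|\xi|^{2/5}$ in Proposition \ref{prop:aeps1}).

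The difficulty is that \emph{neither} $T_1$ nor $T_2$ dominates uniformly in $(p,\xi)$: for large $\xi$ the factor $(\eps\omega'\xi)^2$ in the denominator suppresses $T_1$, while for small $\xi$ the factor $\eps^{8/5}p$ suppresses $T_2$. Since
$$\eps^{\frac 6{25}}p^{\frac 25}+|\xi|^{\frac 25}\le 2\max\!\big(\eps^{\frac 6{25}}p^{\frac 25},\,|\xi|^{\frac 25}\big),$$
and since $\eps^{6/25}p^{2/5}\ge|\xi|^{2/5}$ is equivalent to $\eps^{3/5}p\ge|\xi|$, I would split the analysis into the two regimes $|\xi|\ge\eps^{3/5}p$ and $|\xi|<\eps^{3/5}p$ and, in each, bound from below the single integral carrying the dominant power. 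The threshold $\eps^{3/5}p$ versus $|\xi|$ is chosen precisely because it also decides which of the two scales $\eps^{8/5}p$ and $\eps|\xi|$ governs the denominator.

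In the regime $|\xi|\ge\eps^{3/5}p$ (so that $\eps^{8/5}p\le\eps|\xi|$) I bound $T_2$ from below. Fix $\delta\in(0,\tfrac12)$ so that $|\omega'(k)|\ge\pi\cos(\pi\delta)>0$ for $|k|\le\delta$, and restrict the integral to $\{\,|k|\le\delta,\ V(k)\le\eps|\xi|\,\}$; there the numerator is $\ge(\pi\cos\pi\delta)^2\eps^2\xi^2$ and the denominator is $\le(4+\pi^2)\eps^2\xi^2$, so the fraction is bounded below by a positive constant. Using $V(k)\ge C_0^{-1}|k|^{5/3}$ and $\omega(k)\le\pi|k|$ from Proposition \ref{eq:W} gives $V/\omega^2\ge c|k|^{-1/3}$, and the change of variable \eqref{eq:cv} ($w=|k|^{5/3}/(\eps|\xi|)$) turns $\eps^{-2/5}\int|k|^{-1/3}\,dk$ over this set into $c\,\eps^{-2/5}(\eps|\xi|)^{2/5}=c|\xi|^{2/5}$. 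Symmetrically, in the regime $|\xi|<\eps^{3/5}p$ (so that $(\eps\omega'\xi)^2\le\pi^2(\eps^{8/5}p)^2$) I bound $T_1$ from below by restricting to $\{\,|k|\le\min(\delta,(\eps^{8/5}p/C_0)^{3/5})\,\}$, where $\eps^{8/5}p+V\sim\eps^{8/5}p$ controls the denominator; the same estimate $V/\omega^2\ge c|k|^{-1/3}$ together with $w=|k|^{5/3}/(\eps^{8/5}p)$ yields $T_1\ge c\,\eps^{-2/5}(\eps^{8/5}p)^{2/5}=c\,\eps^{6/25}p^{2/5}$. The hypothesis $\eps K\le1$ guarantees $\eps^{8/5}p\le\eps^{3/5}\le1$ and $\eps|\xi|\le1$, so these truncation radii stay $<\tfrac12$; the borderline case in which a radius would exceed the fixed cut-off $\delta$ is handled by integrating up to $\delta$, which only improves the bound. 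Adding the two regimes gives \eqref{eq:lowerestimatefora3}.

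The main obstacle is exactly this absence of a single dominant mechanism: one must recognise that the two target powers come from two different $k$-regions and two different balances in the denominator, and organise the proof around the threshold $\eps^{3/5}p\sim|\xi|$ so that in each regime the competing scale ($\eps|\xi|$ or $\eps^{8/5}p$) is subordinate. Keeping all constants independent of $\eps$ (for $\eps K\le1$) is what forces the explicit truncations and the clean choices of change of variables, rather than appealing to the pointwise limits of Proposition \ref{prop:aeps1}, which are not quantitative enough to deliver a uniform lower bound.
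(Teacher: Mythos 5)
Your proof is correct and follows essentially the same route as the paper's: the same observation that both integrals in \eqref{eq:a3eps2} contribute with a fixed sign, the same split into the two regimes $|\xi|\lessgtr\eps^{3/5}p$, the same choice of which integral to keep in each regime, and the same changes of variables $w=|k|^{5/3}/(\eps^{8/5}p)$ and $w=|k|^{5/3}/(\eps|\xi|)$. The only (cosmetic) difference is that you truncate each integral to the sub-region where the rational factor is bounded below by a constant before integrating $|k|^{-1/3}$, whereas the paper changes variables over the whole interval $(0,1/4)$ and bounds the resulting integral directly.
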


\begin{proof}[Proof of Lemma \ref{lem:a_3}]
We recall that $a^\eps_3(p,\xi)$ is given by \eqref{eq:a3eps2}.
In particular, we note that for all $(p,\xi)\neq (0,0)$, we have  $a^\eps_3(p,\xi)<0$.
Furthermore, we can write (using the fact that all the terms in \eqref{eq:a3eps2} have the same sign):
\begin{align*} 
-a^\eps_3(p,k)  & \geq  \eps^{\frac 6 5 }p \int_{0}^{\frac 1 4}   \frac{\eps^{\frac{8}{5}} p+V(k)  }{(\eps^{\frac{8}{5}} p+V(k))^2+(\eps \omega'(k) \xi)^2 } \frac{V(k)}{\omega(k)^2}\, dk\\
& \quad  + \eps^{-\frac 2 5} \int_{0}^{\frac 1 4}  \frac{ (\eps \omega'(k) \xi)^2 }{(\eps^{\frac{8}{5}} p+V(k))^2+(\eps \omega'(k) \xi)^2 } \frac{V(k)}{\omega(k)^2}\, dk.
\end{align*}
Using the fact that for $k\in(0,1/4)$ we have $C_0^{-1} |k|^{5/3} \leq V(k) \leq C_0 |k|^{5/3}$, $\frac{\pi}{\sqrt 2}\leq \omega'(k)\leq \pi$ and $ \frac{\pi}{2} k \leq \omega(k)\leq \pi k$, we obtain the following lower bound (for some constant $c>0$):
\begin{align} 
-a^\eps_3(p,k)  & \geq c \,\eps^{\frac 6 5 }p \int_{0}^{\frac 1 4}   \frac{ |k|^{4/3}  }{(\eps^{\frac{8}{5}} p+C_0 |k|^{5/3})^2+( \eps\pi  \xi)^2 } \, dk\nonumber\\
& \quad  +  c\, \eps^{-\frac 2 5}  \int_{0}^{\frac 1 4}  \frac{ (\eps \pi \xi)^2 k^{-1/3} }{(\eps^{\frac{8}{5}} p+C_0 |k|^{5/3})^2+(\eps\pi \xi)^2 } \, dk. \label{eq:a3eps3}
\end{align}

From now on, we fix $K$ and assume that $0<p\leq K$ and that $|\xi|\leq K$.
We also assume that $\eps$ is such that $\eps K \leq 1$.
In order to establish \eqref{eq:lowerestimatefora3}, we consider two cases, and in each case we use only one of the integrals in \eqref{eq:a3eps3}:
\begin{enumerate}
\item First, assume that $p$ and $\xi$ are such that
\begin{equation}\label{eq:xip} 
|\xi|\leq \eps^{\frac 3 5} p.
\end{equation}
Then, using only the first integral in \eqref{eq:a3eps3}, we get (using \eqref{eq:xip}):
\begin{align*} 
-a^\eps_3(p,k) 
 & \geq  c\, \eps^{\frac 6 5 }p  \int_{0}^{\frac 1 4}   \frac{ |k|^{4/3}  }{(\eps^{\frac{8}{5}} p+C_0 |k|^{5/3})^2+( \pi \eps^{\frac 8 5} p  )^2 } \, dk
\end{align*}
and the change of variable $w=(\eps^{\frac 8 5}p)^{-\frac{3}{5}} k$ yields
$$
-a^\eps_3(p,k) 
\geq c\, \eps^{\frac 6 5 }p \frac{(\eps^{\frac{8}{5}} p)^{\frac 7 5}}{(\eps^{\frac{8}{5}} p)^2}
 \int_{0}^{\frac 1 {4(\eps^{\frac{8}{5}} p)^{\frac{3}{5}}}}   \frac{ |w|^{4/3}  }{(1+C_0 |w|^{5/3})^2+\pi ^2 } \, dw
$$
and using the fact that $ \eps^{\frac{8}{5}} p \leq 1$, we deduce (for a different constant $c$):
$$-a^\eps_3(p,k)  \geq c \, \eps^{\frac{6}{25}} p^\frac{2}{5}.$$
Finally, using \eqref{eq:xip}, we also  get
$$-a^\eps_3(p,k)  \geq  c |\xi|^{\frac 2 5}$$
and so \eqref{eq:lowerestimatefora3} holds in this case.

\item Next, we assume that
$p$ and $\xi$ are such that
\begin{equation}\label{eq:pxi} 
 \eps^{\frac 3 5} p\leq |\xi|
\end{equation}
and using  only the second integral in \eqref{eq:a3eps3}, we get (using \eqref{eq:pxi}):
\begin{align*} 
-a^\eps_3(p,k)  & \geq c\,  \eps^{-\frac 2 5}  \int_{0}^{\frac 1 4}  \frac{ (\eps \pi \xi)^2 k^{-1/3} }{(\eps |\xi|+C_0 |k|^{5/3})^2+(\eps\pi \xi)^2 } \, dk 
\end{align*}
and the change of variable $w=(\eps|\xi|)^{-\frac 3 5}k$, yields:
\begin{align*} 
-a^\eps_3(p,k)  & \geq c\,\eps^{-\frac 2 5} \pi^2 \int_{0}^{\frac 1 {4(\eps \xi)^{\frac 3 5 }}}  \frac{ (\eps  \xi)^\frac{2}{5} w^{-1/3} }{(1+C_0 |w|^{5/3})^2+\pi^2 } \, dw \\
& \geq 
c |\xi|^{\frac 2 5} 
\end{align*}
(using the fact that $\eps|\xi|\leq 1$).
Finally, using \eqref{eq:pxi}, we also  get
$$-a^\eps_3(p,k)  \geq  c \, \eps^{\frac{6}{25}} p^\frac{2}{5}$$
and so \eqref{eq:lowerestimatefora3} holds also in this case.
\end{enumerate}
\end{proof}

\begin{proof}[Proof of Proposition \ref{prop:convS}]
We use equation \eqref{eq:asymptotic2} to determine $\widehat S^\eps$:
\begin{equation}\label{eq:SSSS}
\widehat S^\eps= \frac{1}{a^\eps_3}\left(-\mathcal{F}_2^\eps(\widehat f_0)-R^\eps_2 -a^\eps_2 \widehat T^\eps\right).
\end{equation}
Note that we can do this since $a^\eps_3(p,\xi)<0$ as long as $p$ and $\xi$ are not simultaneously zero.

We now need to show that we can pass to the limit in all the terms in the right hand side.
First, using Lemma \ref{lem:a_3}
and the  estimate \eqref{eq:estimatea2}, we  deduce that for a given  $K$ and for all $\eps \leq K^{-1}$, we have
\begin{align*}
\frac{a^\eps_2(p,\xi)}{a^\eps_3(p,\xi)}=\left| \frac{a^\eps_2(p,\xi)}{a^\eps_3(p,\xi)}\right| &\leq  
\frac{Cp\eps^{3/5}\ln(1+C\eps^{-8/5}p^{-1})}{c\eps^{\frac{6}{25}} p^{\frac 2 5}} + \frac{C\eps  |\xi|^2}{c|\xi|^\frac{2}{5}} +\frac{C |\xi|}{c|\xi|^{\frac 2 5}}\\
& \leq  
Cp^{\frac 3 5}\eps^{\frac{9}{25}} \ln(1+C\eps^{-8/5}p^{-1})+ C\eps  |\xi|^\frac{8}{5} + C |\xi|^\frac{3}{5} \\
& \leq C(K)
\end{align*}
for all $0\leq p\leq K$ and $|\xi|\leq K$.
Furthermore, this uniform bound, together with Proposition \ref{prop:aeps1} implies that
$$\frac{a^\eps_2(p,\xi)}{a^\eps_3(p,\xi)} \longrightarrow \frac{\kappa_2}{\kappa_3}|\xi|^{3/5}$$
point-wise and in $L^p_{loc}((0,\infty)\times \RR)$ strong.

\medskip

Next, for $\eps$ sufficiently small we can use Lemma \ref{lem:a_3} along with the estimates on $\mathcal{F}^\eps_2(\hat{f}_0)$ in \eqref{eq:estimateF2} to conclude that
$$\frac{1}{a^\eps_3}\mathcal{F}^\eps_2(\hat{f}_0) \rightarrow 0 \qquad \mbox{in } \mathcal{D}'((0,\infty)\times \RR).$$

Finally, we need to bound the quantity
$$\left|\frac{R^\eps_2(p,\xi)}{a^\eps_3(p,\xi)}\right|. $$
For that, we fix $0<a<P$ and for $p\in(a,P)$ and $|\xi|\leq K$,
estimate  \eqref{eq:estimate_R2} then implies
\begin{align*}
|R^\eps_2(p, \xi)| & \leq C\eps^{-1} \lp  \eps^{\frac{24}{25} (1-\eta)} +  (\eps\xi)^{\frac{3}{5}(1-\eta)}\rp^{1/2}    \lp \int_\TT (\widehat f^\eps - \Pi (\widehat f^\eps) )^2 V(k)dk\rp^{1/2}\\
& \leq \left\{
\begin{array}{ll}
C\eps^{-1}   \eps^{\frac{12}{25} (1-\eta)} \lp \int_\TT (\widehat f^\eps - \Pi (\widehat f^\eps) )^2 V(k)dk\rp^{1/2} & \mbox{ if }  |\xi|\leq \eps^{3/5} \\
C\eps^{-1}  (\eps\xi)^{\frac{3}{10}(1-\eta)} \lp \int_\TT (\widehat f^\eps - \Pi (\widehat f^\eps) )^2 V(k)dk\rp^{1/2} & \mbox{ if }  |\xi|\geq \eps^{3/5} \\
\end{array}
\right.
\end{align*}
and we are going to use the following consequence of Lemma \ref{lem:a_3}:
$$ -a^\eps_3(p,\xi) \geq \left\{ \begin{array}{ll}
c(a)\eps^{6/25} & \mbox { if } |\xi|\leq \eps^{3/5}\\
c|\xi|^{2/5}  & \mbox { if } |\xi|\geq \eps^{3/5}
\end{array}
\right.
$$ 
We deduce
\begin{eqnarray*}
\left|\frac{R^\eps_2(p,\xi)}{a^\eps_3(p,\xi)}\right| &\leq& 
\left\{\begin{array}{ll}
C  \eps^{\frac{-19-12\eta}{25}} \lp \int_\TT (\widehat f^\eps - \Pi (\widehat f^\eps) )^2 V(k)dk\rp^{1/2} & \mbox{ if }  |\xi|\leq \eps^{3/5} \\
C\eps^{-1}  \eps^{\frac{3}{10}(1-\eta)} |\xi|^{\frac{-1-3\eta}{10}}\lp \int_\TT (\widehat f^\eps - \Pi (\widehat f^\eps) )^2 V(k)dk\rp^{1/2} & \mbox{ if }  |\xi|\geq \eps^{3/5} \\
\end{array}
\right. .
\end{eqnarray*}
Finally, using the condition $ |\xi|\geq \eps^{3/5}$ in the second case, we deduce that
$$
\left|\frac{R^\eps_2(p,\xi)}{a^\eps_3(p,\xi)}\right| \leq 
C(a,p,K)  \eps^{\frac{-19-12\eta}{25}} \lp \int_\TT (\widehat f^\eps - \Pi (\widehat f^\eps) )^2 V(k)dk\rp^{1/2} 
$$
for all  $p\in(a,P)$ and $|\xi|\leq K$.

We deduce
\begin{align*} \nonumber
\left( \int_a^P\int_{-K}^K
\left| \frac{R_2^\eps(p,\xi)}{a_3^\eps(p,\xi)} \right|^2 \, d\xi \, dp\right)^{1/2} & \leq C(a,P,K)  \eps^{\frac{-19-12\eta}{25}} \|  (\hfe-\Pi(\hfe))\|_{L^\infty((a,\infty);L^2_V(\TT\times\RR))}\\ \nonumber
& \leq C(a,P,K)   \eps^{\frac {-19-12\eta} {25} }  \| f^\eps-\Pi(f^\eps)\|_{L^2((0,\infty);L^2_V(\TT\times\RR))}\\
& \leq C(a,P,K)   \eps^{\frac {1-12\eta} {25} } 
\end{align*}
which goes to zero as $\eps\to0$.
\medskip

We can now pass to the limit in \eqref{eq:SSSS} to conclude that 
$$\widehat S^\eps \longrightarrow \widehat S =\frac{\kappa_2}{\kappa_3}|\xi|^{3/5}\widehat T\qquad \mbox{in } \mathcal{D}'((0,\infty)\times \RR)$$
which completes the proof of Proposition \ref{prop:convS}.
\end{proof}

\bibliographystyle{plain}
\bibliography{biblio_FPU}

\signam
\signsm

 \end{document}